\DeclareMathOperator{\sign}{sign}
\DeclareMathOperator{\rank}{rank}
\newcommand{\NCSOStools}{\href{http://ncsostools.fis.unm.si/}{{\tt NCSOStools}} }
\newcommand{\NCSOStoolz}{\href{http://ncsostools.fis.unm.si/}{{\tt NCSOStools}}}
\newcommand{\NCAlgebra}{\href{http://www.math.ucsd.edu/~ncalg/}{{\tt NCAlgebra}} }
\newcommand{\NCAlgebrah}{\href{http://www.math.ucsd.edu/~ncalg/}{{\tt NCAlgebra}}}
\newcommand{\mycontentsbox}{%
{\centerline{NOT FOR PUBLICATION}
\small\tableofcontents}}
\def\enddoc@text{\ifx\@empty\@translators \else\@settranslators\fi
\ifx\@empty\addresses \else\@setaddresses\fi
\newpage\mycontentsbox}
\newcommand{\dom}{\operatorname{dom}}
\newcommand{\ttpp}[3]{\cT_{#1}({#2},{#3})}   %## changed to \ttpp
\def\cB{ \ensuremath{\mathcal B} }
\def\cC{\ensuremath {\mathcal C} }
\def\cD{ \ensuremath{\mathcal D} }
\def\cE{ \ensuremath{\mathcal E} }
\def\cF{ \ensuremath{\mathcal F} }
\def\cH{ \ensuremath{\mathcal H} }
\def\cM{\ensuremath{\mathcal M}}
\def\cN{\ensuremath {\mathcal N} }
\def\cO{ \ensuremath{\mathcal O} }
\def\cP{\ensuremath{\mathcal P}}
\def\cR{\ensuremath{ \mathcal R }}
\def\cT{\ensuremath{\mathcal T}}
\def\cU{\ensuremath{\mathcal U}}
\def\cV{\ensuremath{\mathcal V}}
\def\cY{\ensuremath{\mathcal Y}}
\def\cX{\ensuremath{\mathcal X}}
\newcommand{\NN}{{\ensuremath\mathbb N}}
\def\RR{{\ensuremath{\mathbb R}} }
\def\SS{{\ensuremath{\mathbb S}} }
\def\BB{{\ensuremath{\mathbb B}}}
\def\SR{\SS }
\def\SRdd{\SS^{d \times d} }
\def\SRnn{\SS^{n \times n} }
\def\Snn{\SS^{n\times n}}
\def\Mdd{\RR^{d \times d^\prime} }
\def\Mnns{\RR^{\ell\times \ell}}
\def\gtupn{(\SRnn)^g}
\def\twotupn{(\SRnn)^2}
\def\twotuptwo{(\SR^{2\times 2})^2}
\def\gtupm{(\SS^{m\times m})^g}
\newcommand{\ncalg}{\vspace{.5\baselineskip}\noindent{\it NCAlgebra Command: }}
\newcommand{\ncsos}{\vspace{.5\baselineskip}\noindent{\it NCSOStools Command: }}
\def\ss{\intercal}
\def\Ms{\intercal}
\def\x{x}
\newcommand{\ax}{\mathord{<}\x\mathord{>}}
\newcommand{\axy}{\mathord{<} x,y \mathord{>}}
\newcommand{\axxs}{\mathord{<} x,x^{\ss} \mathord{>}}
\def\moverlay{\mathpalette\mov@rlay}
\def\mov@rlay#1#2{\leavevmode\vtop{%
    \baselineskip\z@skip \lineskiplimit-\maxdimen
    \ialign{\hfil$#1##$\hfil\cr#2\crcr}}}
\def\Ddots{\mathinner{\mkern1mu\raise\p@
\vbox{\kern7\p@\hbox{.}}\mkern2mu
\raise4\p@\hbox{.}\mkern2mu\raise7\p@\hbox{.}\mkern1mu}}
\newcommand{\csff}[4]{-\langle{#1}^{\prime\prime}(#2)[#3]#4,#4\rangle}
\newcommand{\plangle}{\moverlay{(\cr<}}
\newcommand{\prangle}{\moverlay{)\cr>}}
\def\freef{\RR\plangle\x\prangle}
\def\freey{\RR\plangle x,y\prangle}
\newtheorem{theorem}{Theorem}%[section]
\newtheorem{lem}[theorem]{Lemma}
\newtheorem{thm}[theorem]{Theorem}
\newtheorem{cor}[theorem]{Corollary}
\newtheorem{proposition}[theorem]{Proposition}
\theoremstyle{definition}
\newtheorem{remark}[theorem]{Remark}
\newtheorem{example}[theorem]{Example}
\newtheorem{definition}[theorem]{Definition}
\newtheorem{exe}[theorem]{Exercise}
\def\beq{\begin{equation} }
\def\eeq{\end{equation} }
\def\bes{\begin{equation*} }
\def\ees{\end{equation*} }
\def\ben{\begin{enumerate}}
\def\een{\end{enumerate}}
\def\bexe{\begin{exe} }
\def\eexe{\end{exe} }
\def\bexa{\begin{example}  \rm }
\def\eexa{\end{example} }
\def\bmat{\begin{bmatrix}}
\def\emat{\end{bmatrix}}
\def\bev{\begin{verbatim}}
\def\eev{\end{verbatim}}
\def\ntn{n\times n}
\def\tX{{\tilde X}}
\def\nc{noncommutative}
\def\Nc{Noncommutative}
\def\eps{\epsilon}
\def\bpf{\begin{proof}}
\def\epf{\epf{proof}}
\def\bd{\partial \cD}
\def\qpos{QuadratischePositivstellensatz}
\def\dd{\delta}
\def\De{\Delta}
\def\Ga{\Gamma}
\newcommand{\df}[1]{{\it{#1}}{\index{#1}}}
\newcommand\ncad[1]{{\tt #1}\index{#1}}
\newcommand\ncsosd[1]{{\tt #1}\index{#1}}
\title[Free Convexity]{Free Convex Algebraic Geometry}
\author[Helton]{J. William Helton${}^1$}
\address{J. William Helton\\ Department of Mathematics\\
  University of California \\
  San Diego}
\email{helton@math.ucsd.edu}
\thanks{${}^1$Research supported by NSF grants
DMS-0700758, DMS-0757212, and the Ford Motor Co.}
\author[Klep]{Igor Klep${}^2$}
\address{Igor Klep, Department of Mathematics\\ 
The University of Auckland\\ New Zealand}
\email{igor.klep@auckland.ac.nz}
\thanks{${}^2$Supported by the Faculty Research Development Fund (FRDF) of The
University of Auckland (project no. 3701119). Partially supported by the Slovenian Research Agency grant P1-0222.}
\author[McCullough]{Scott McCullough${}^3$}
\address{Scott McCullough\\ Department of Mathematics\\
  University of Florida, Gainesville %\\
   % Box 118105\\
   %  Gainesville, FL 32611-8105\\
   %  USA
   }
   \email{sam@math.ufl.edu}
\thanks{${}^3$Research supported by the NSF grants DMS-0758306 and DMS-1101137.}
\subjclass[2000]{47A63, 46L89, 14P10 (Primary), 15A22, 13J30 (Secondary)}
\keywords{Noncommutative polynomial, Linear Matrix Inequality, convexity, positivity, rational function, middle matrix, free positivity, free convexity}
\date{\today}
\begin{abstract} \end{abstract}
\begin{document}

\setcounter{tocdepth}{3}
\contentsmargin{2.55em} 
\dottedcontents{section}[3.8em]{}{2.3em}{.4pc} 
\dottedcontents{subsection}[6.1em]{}{3.2em}{.4pc}
\dottedcontents{subsubsection}[8.4em]{}{4.1em}{.4pc}

\begin{abstract}
 This chapter is a tutorial on techniques and results in
free convex algebraic geometry   and free real algebraic geometry (RAG).
   The term free  refers to the central role played
  by algebras of noncommutative polynomials $\RR\ax$
  in free (freely noncommuting) variables $x=(x_1,\dots,x_g)$.
The subject pertains to  problems where the unknowns are
matrices or Hilbert space operators as arise in
linear systems engineering and quantum information theory.

 The  subject of free RAG flows in  two branches.
 One,  free positivity and inequalities
 is an analog of classical real algebraic geometry,
 a theory of polynomial inequalities embodied in
 algebraic formulas called Positivstellens\"atze;
often free  Positivstellens\"atze have cleaner
  statements than their commutative counterparts.
 Free convexity, the second branch of free RAG,
 arose in an effort to unify a torrent of ad hoc
 optimization techniques which came on the linear
 systems engineering scene in the mid 1990's.
 Mathematically, much as in the
 commutative case, free convexity is connected with free positivity
 through the second derivative:
 A free polynomial is convex if and only if its Hessian is positive.
 However, free convexity is a very restrictive condition,
 for example, free convex polynomials have degree 2 or less.

This article describes for a beginner techniques involving
 free convexity.  As such it also serves as a point of entry into
 the larger field of  free real algebraic geometry. 
\end{abstract}

%%%%%%% text-only abstract
\iffalse
This chapter is a tutorial on techniques and results in free convex algebraic geometry  and free real algebraic geometry (RAG). The term free refers to the central role played by algebras of noncommutative polynomials R<x> in free (freely noncommuting) variables x=(x_1,...,x_g). The subject pertains to problems where the unknowns are matrices or Hilbert space operators as arise in linear systems engineering and quantum information theory.

The subject of free RAG flows in two branches. One, free positivity and inequalities is an analog of classical real algebraic geometry, a theory of polynomial inequalities embodied in algebraic formulas called PositivstellensŠtze; often free PositivstellensŠtze have cleaner statements than their commutative counterparts. Free convexity, the second branch of free RAG, arose in an effort to unify a torrent of ad hoc optimization techniques which came on the linear systems engineering scene in the mid 1990's. Mathematically, much as in the commutative case, free convexity is connected with free positivity through the second derivative: A free polynomial is convex if and only if its Hessian is positive. However, free convexity is a very restrictive condition, for example, free convex polynomials have degree 2 or less.

This article describes for a beginner techniques involving free convexity. As such it also serves as a point of entry into the larger field of free real algebraic geometry.
\fi
%%%%%%%%%

\maketitle

\section {Introduction}
  This chapter is a tutorial on techniques and results in
  \df{free convex algebraic geometry}
  and \df{free positivity}.   As such it 
  also serves as a point of entry into the larger field of 
  \df{free real algebraic geometry} (\df{free RAG}),
  and  makes contact
  with  noncommutative real algebraic geometry
 \cite{Hel02, HKM10c, HKM+, HKM++, HMppt, KlSch:08, KlSch:09a, 
McC01, PNA10,S05,SmuSurvey}, 
   free analysis\index{free analysis} and
  free probability\index{free probability} (lying at the origins of free analysis,
  cf.~\cite{AIM}),
  free analytic function theory and
 free harmonic analysis 
  \cite{HKM10a,HKM10b, HKMS09,MS+,Pop06,Voi04,Voi10, KVV-}.
  
   The term free here refers to the central role played 
  by algebras of noncommutative polynomials $\RR\ax$
  in free (freely noncommuting) variables $x=(x_1,\dots,x_g)$.  
  A striking difference between the free and classical
  settings is the following Positivstellensatz.

\begin{theorem}[Helton \cite{Hel02}]
 \label{thm:grandpos} 
   A nonnegative $($suitably defined$)$ free
  polynomial is a sum of squares. 
\end{theorem}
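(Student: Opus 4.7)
\smallskip\noindent
\textbf{Proof proposal.} The plan is to establish the result via a \emph{Gram (middle) matrix} representation combined with a Hahn--Banach separation argument and a truncated GNS construction. I will interpret ``suitably defined'' as: $p \in \RR\ax$ is symmetric (i.e.\ $p=p^{\ss}$) and $p(X_1,\dots,X_g) \succeq 0$ for every tuple of symmetric matrices $X_i \in \SRnn$ of every size $n$, and ``sum of squares'' means $p = \sum_j q_j^{\ss} q_j$ for finitely many $q_j \in \RR\ax$.

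First I would set $2d = \deg p$ (passing to $p+\epsilon$ or padding with zero coefficients if needed to make the degree even) and write
\[
  p \;=\; V_d^{\ss}\, G\, V_d,
\]
where $V_d$ is the column vector of all words in $x_1,\dots,x_g$ of length at most $d$, and $G$ is a symmetric ``Gram'' matrix indexed by those words. Such a $G$ exists but is far from unique: different choices differ by matrices in the kernel of the map $G \mapsto V_d^{\ss}GV_d$. The key reduction is: if some representing $G$ can be chosen positive semidefinite, then a Cholesky factorization $G = \sum_j v_j v_j^{\ss}$ immediately gives $p = \sum_j (v_j^{\ss}V_d)^{\ss}(v_j^{\ss}V_d)$, a sum of hermitian squares.

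Next I would argue by contradiction. Suppose no representing $G$ is positive semidefinite. The set of polynomials of degree $\le 2d$ admitting a PSD Gram representation is a closed convex cone $\Sigma^2_d$ in the finite dimensional space $\RR\ax_{\le 2d}$, and by hypothesis $p \notin \Sigma^2_d$. By Hahn--Banach there is a linear functional $L$ on $\RR\ax_{\le 2d}$ with $L(p) < 0$ while $L(q^{\ss}q) \ge 0$ for every $q \in \RR\ax_{\le d}$. Now apply a (finite-dimensional) GNS construction: the bilinear form $\langle q,r\rangle_L := L(r^{\ss}q)$ on $\RR\ax_{\le d}$ is positive semidefinite, so quotienting by its kernel yields a finite dimensional real Hilbert space $\cH$ with a distinguished cyclic vector $\xi$ (the class of $1$). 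Because $L(q^{\ss} x_i^{\ss} r) = L((x_i q)^{\ss} r)$ and $x_i^{\ss}=x_i$, left multiplication by $x_i$ descends to symmetric linear operators $X_i$ on $\cH$ (the standard verification uses the Cauchy--Schwarz inequality for $\langle\cdot,\cdot\rangle_L$ to show the multiplication is well defined modulo the kernel, at least on the subspace relevant for evaluating $p$). Then $\langle p(X)\xi,\xi\rangle_L = L(p) < 0$, producing a tuple of symmetric matrices on which $p$ is not positive semidefinite, a contradiction.

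The main obstacle is carrying out the GNS step cleanly in the truncated, degree-bounded setting: multiplication by $x_i$ raises degree, so one must take care that the operators $X_i$ are genuinely well defined on the finite dimensional quotient $\cH$ and remain symmetric there, and that evaluating the degree-$2d$ polynomial $p$ at $(X_1,\dots,X_g)$ applied to $\xi$ really does reproduce $L(p)$. Once this bookkeeping is in place, the theorem follows; the payoff of the free setting over the commutative one is precisely that this truncated GNS construction is clean---there are no denominators, no moment-problem obstructions, and no archimedean hypotheses are needed.
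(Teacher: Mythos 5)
The paper does not actually prove Theorem \ref{thm:grandpos}: it is stated in the Introduction and restated as Theorem \ref{thm:ncsos}, where the authors write that they ``will not discuss its proof in this chapter'' and instead cite \cite{Hel02,McC01}. So there is no in-paper argument to compare against. Your separation-plus-GNS strategy is indeed the approach of \cite{Hel02}, and the opening reduction (positive semidefinite Gram matrix $\Rightarrow$ sum of squares via factorization) is exactly right.

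However, the step you flag as the ``main obstacle'' is a genuine gap, and the appeal to Cauchy--Schwarz does not close it. The problem is not well-definedness modulo the kernel; it is degree overflow. With $L$ defined only on $\RR\ax_{\le 2d}$ and $\cH$ built from $\RR\ax_{\le d}$, the relation $\langle X_i[q],[r]\rangle_L = L(r^{\ss}x_iq)$ is not even meaningful when $q,r$ both have degree $d$: the word $r^{\ss}x_iq$ has degree $2d+1$, outside the domain of $L$. And $[x_iq]$ itself leaves the model space once $\deg q = d$. So ``left multiplication by $x_i$ descends to a symmetric operator on $\cH$'' is false as stated. The standard repair has two ingredients you would need to add. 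First, separate $p$ from the larger cone $\Sigma^2_{d+1}\subseteq\RR\ax_{\le 2d+2}$ (legitimate since a degree-$2d$ polynomial that is an SOS lies in $\Sigma^2_d\subseteq\Sigma^2_{d+1}$), and take $L$ on $\RR\ax_{\le 2d+2}$; one must also justify that this cone is closed, which in the free setting follows from coefficient bounds obtained by evaluating on the truncated Fock-space shifts (Exercise \ref{exe:fock}). Second, either perturb $L$ by $\eps L_0$ where $L_0(q)=\langle q(Y)\emptyset,\emptyset\rangle$ is the strictly positive Fock functional, so the form is nondegenerate on $\RR\ax_{\le d+1}$ and no quotient is needed, or else work with the compressed operators $\widetilde X_i = P_{\le d}M_{x_i}|_{\RR\ax_{\le d}}$ directly. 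One then proves by induction on $|w|\le d$ that $w(\widetilde X)\xi=[w]$ (the projection is harmless because the intermediate words have length $<d$), and only then does the pairing $\langle p(\widetilde X)\xi,\xi\rangle = L(p)<0$ via the splitting $w=u^{\ss}v$ with $|u|,|v|\le d$. These degree-buffer and compression arguments are precisely what makes the truncated GNS step nontrivial, so ``this truncated GNS construction is clean'' overstates the case; what the free setting really buys you is closedness of the SOS cone and the absence of Archimedean or denominator hypotheses, which you correctly identify.
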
\index{sum of squares}

 The  subject of free RAG flows in  two branches.
 One, 
 free positivity 
 is an analog of classical real algebraic geometry,
 a theory of polynomial inequalities embodied in Positivstellens\"atze.
 As is the case with the sum of
 squares result above (Theorem \ref{thm:grandpos}), 
 generally free  Positivstellens\"atze have cleaner 
  statements than do their commutative counterparts; 
see e.g.~\cite{McC01,Hel02, HMP04, HKM++} for a sample. 
 Free convexity, the second branch of free RAG,\index{free convexity}
 arose in an effort to unify 
 a torrent of ad hoc techniques which came on the linear 
 systems engineering scene in the mid 1990's. 
 We soon give a quick sketch of the engineering motivation,
 based on the slightly more complete sketch given in the 
  survey article \cite{OHMP09}. 
 Mathematically, much as in the
 commutative case, free convexity is connected with free positivity
 through the second derivative: A free polynomial is convex if and only if
 its Hessian is positive.

 The tutorial proper starts with Section \ref{sec:basics}. 
 In the remainder of this introduction, motivation for 
 the study of free positivity and convexity arising in 
 \df{linear systems engineering}, \df{quantum phenomena}, and
 other subjects such as \df{free probability} is provided,
 as are some suggestions for further reading.

\subsection{Motivation}\label{subsec:Motivation}
 While the theory is both mathematically pleasing and natural,
 much of the excitement of  
 free convexity and positivity stems from its applications.
 Indeed, the fact that a large class of 
  linear systems engineering problems naturally lead
  to free inequalities provided the main force behind the development of the
  subject.  In this motivational section, we describe in some detail
  the linear systems  point of view.  We also give a brief introduction
to other applications.

\subsubsection{Linear Systems Engineering}
The layout of a linear systems  problem 
is typically specified
by a signal flow diagram. Signals go into boxes and other signals
come out. 
The boxes in a linear system contain
constant coefficient  linear differential equations
which are specified entirely by matrices (the coefficients of the
  differential equations). 
Often many boxes appear and many signals
transmit between them.  
In a typical problem some boxes are 
given and some we get to design subject to the condition that 
the $L^2$ norm of various signals must compare 
in a prescribed way, e.g.~the input to the system has $L^2$
norm bigger than the output.
The signal flow diagram itself and corresponding problems
do not specify the size of matrices involved.
So ideally any algorithms derived apply to  matrices of all sizes.
Hence the problems are called \df{dimension free}.

An empirical observation is that system problems of this type
convert to inequalities on polynomials in matrices,
the form of the polynomials being determined entirely by the 
signal flow layout (and independent of the matrices involved).
Thus the systems problem naturally leads to
 free polynomials and free positivity conditions.

For yet a more detailed discussion of this example, see \cite[\S 4.1]{OHMP09}. 
Those who read Chapter 2 saw a basic example of this in Chapter 2.2.1. Next we give more
of an idea of how the correspondence between linear systems and noncommutative polynomials occurs. This is done primarily with an example.

\subsubsection{Linear systems}
A {\it linear system}\index{linear system} $\mathfrak F$ is given
by the constant coefficient linear differential equations
\begin{align*}
\frac{dx}{dt} &= Ax + Bu, \\
y &= Cx,
\end{align*}
with the vector
\begin{itemize}
\item
  $x(t)$ at each time $t$
being in the  vector space $\cX$ called the {\it state space},
\index{state space}
\item $u(t)$ at each time $t$ being in the
vector space $\cU$ called the {\it input space}, \index{input
space}
\item $y(t)$ at each time $t$ being in the vector space
$\cY$ called the {\it output space}\index{output space},
\end{itemize}
and $A,B,C$ being linear maps on the corresponding vector spaces.

\subsubsection{Connecting linear systems}
Systems can be connected in incredibly complicated configurations.
We  describe a simple connection and this goes a long way toward
illustrating the  general idea. Given two linear systems
$\mathfrak F$, $\mathfrak G$, we describe the formulas for
connecting them in feedback.

\def\fa{{Q}}
\def\fb{{R}}
\def\fc{{S}}

One basic feedback connection  is  described by the diagram  

\begin{center}
%%%%%%%%% original from http://www.texample.net/tikz/examples/area/electrical-engineering/
%%%%%%%
\tikzstyle{block} = [draw, fill=blue!20, rectangle, 
    minimum height=2em, minimum width=3em]
\tikzstyle{sum} = [draw, fill=blue!20, circle, node distance=1.5cm]
\tikzstyle{input} = [coordinate]
\tikzstyle{output} = [coordinate]
\tikzstyle{pinstyle} = [pin edge={to-,thin,black}]
\begin{tikzpicture}[auto, node distance=2cm,>=latex]
    \node [input, name=input] {};
    \node [sum, right of=input] (sum) {};
    \node [block, right of=sum] (system) {$\mathfrak F$};
    \node [output, node distance=3cm, right of=system] (output) {};
    \node [block, node distance=1.5cm, below of=system] (measurements) {$\mathfrak G$};
    \draw [draw,->] (input) -- node {$u$} (sum) node [pos=0.99] {$+$};
    \draw [->] (sum) -- node {$e$} (system);
    \draw [->] (system) -- node [name=y] {$y$}(output);
    \draw [->] (y) |- (measurements);
    \draw [->] (measurements) -| node[pos=0.99] {$-$} 
        node [near end] {$v$} (sum);
\end{tikzpicture}
\end{center}
\noindent
called a {\it signal flow diagram}.\index{signal flow diagram}
Here $u$ is a signal going into the 
{\it closed loop system} \index{closed loop system}
and $y$ is the signal coming out.
The signal flow diagram is equivalent to a collection of 
equations.
The 
systems $\mathfrak F$ and $\mathfrak G$ 
themselves are respectively given by
the linear differential equations
\begin{align*}
\frac{dx}{dt} &= Ax+Be, &
\frac{d\xi}{dt} &= \fa \, \xi + \fb \, w, \\
y&=Cx, &  v&= \fc \, \xi.
\end{align*}
The feedback connection  is described
 algebraically by 
\begin{align*}
w &= y &
&\text{and} &
e&=u-v.
\end{align*}
Putting these relations together gives that 
the  closed loop system 
is described by differential equations 
\begin{align*}
  \frac{dx}{dt} &= Ax  - B \fc \xi + Bu, \\
  \frac{d\xi}{dt} &= \fa \, \xi+ \fb \, y = \fa \, \xi + \fb \, C x, \\
  y&=Cx.
\end{align*}
which is conveniently described in matrix form as 
\begin{equation}
  \label{eq:cloopsys}
  \begin{aligned}
    \frac{d}{dt}
    \begin{bmatrix} x\\ \xi \end{bmatrix} &= \begin{bmatrix} A & -B \fc \\ \fb\, C & \fa
    \end{bmatrix} \begin{bmatrix} x\\ \xi
    \end{bmatrix} + \begin{bmatrix} B\\0
    \end{bmatrix} u,  \\
    y &= \begin{bmatrix} C & 0 \end{bmatrix} \begin{bmatrix} x\\ \xi \end{bmatrix},
  \end{aligned}
\end{equation}
where the state space of the closed loop systems is the direct sum
$\cX \oplus \cY$ of the state spaces $\cX$ of $\mathfrak F$ and
$\cY$ of $\mathfrak G$.
From  \eqref{eq:cloopsys},  the coefficients of the O.D.E.
are (block) matrices whose entries are (in this case simple) 
polynomials in the matrices $A,B,C,Q,R,S$.

This illustrates the 
 moral of the general story:\\ 

{\it System connections produce a new system whose coefficients
are matrices with entries which are noncommutative polynomials
(or at worst ``rational expressions'')
in the coefficient matrices of the component systems. }\\

 Complicated signal
 flow diagrams give complicated matrices of noncommutative polynomials
or rationals.
 Note in
 what was said the dimensions of vector spaces and matrices 
 $A,B,C,Q,R,S$ 
 never
 entered explicitly; the algebraic form of (\ref{eq:cloopsys}) is
 completely determined by the flow diagram. Thus, such linear
 systems lead to  {\it dimension free}\index{dimension free} problems.

Next we turn to how ``noncommutative inequalities" arise.
The main constraint producing them can be thought of  as energy dissipation,
a special case of which are  the Lyapunov functions 
already seen in  Chapter 2.2.1.

\subsubsection{Energy dissipation}
\label{subsec:diss}

We have a system $\mathfrak F$ and want a condition which checks
whether
$$
\int_{0}^{\infty}{|u|}^{2}dt \geq \int_{0}^{\infty}{|\mathfrak
Fu|}^{2}dt,
    \qquad x(0)=0,
$$
holds for all input functions $u$, where $\mathfrak F u =y$ in the
above notation. If this holds $\mathfrak F$ is called a {\it
dissipative system}.\index{dissipative system}
\begin{center}
\tikzstyle{block} = [draw, fill=blue!20, rectangle, 
    minimum height=2em, minimum width=3em]
\tikzstyle{sum} = [draw, fill=blue!20, circle, node distance=3cm]
\tikzstyle{input} = [coordinate]
\tikzstyle{output} = [coordinate]
\tikzstyle{pinstyle} = [pin edge={to-,thin,black}]
\begin{tikzpicture}[auto, node distance=3cm,>=latex]
    \node [input, name=input] {};
    \node [block, right of=input] (system) {$\mathfrak F$};
    \node [output, right of=system] (output) {$L^{2}[0, \infty]$};
    \draw [draw,->] (input) -- node [name=y] {$L^{2}[0, \infty]$}(system);
    \draw [->] (system) -- node [name=y] {$L^{2}[0, \infty]$}(output);
\end{tikzpicture}
\end{center}

 The energy dissipative condition is formulated
 in the language of analysis, but it converts to algebra
 (or at least an algebraic inequality) because of the
 following construction, which assumes the existence
 of a
 ``potential energy''-like
 function $V$ on the state space.
 A  function $V$ which satisfies  $V \geq 0, \ V(0)=0,$
 and
  $$
    V(x(t_{1})) + \int_{t_{1}}^{t_{2}} |u(t)|^{2} dt \ \ \geq \ \
    V(x(t_{2}))  + \int_{t_{1}}^{t_{2}}{|y(t)|^{2}} dt
   $$
for all input functions $u$ and initial states $x_1$ is
called a {\it storage function}\index{storage function}.
The displayed inequality is interpreted physically as \\

{\it
  potential energy now $+$ energy in $\geq$
potential energy then
$+$  energy out.}\\

 Assuming enough smoothness of $V$,
 we can differentiate this integral condition and
 use $\frac{d}{dt}x(t_{1}) = \ Ax(t_{1})+Bu(t_{1})$
   to obtain  a
 differential inequality
  \begin{equation}
    \label{eq:dissipux}
0 \ \geq \ \ \nabla  V(x) (Ax  + Bu) \ \ + \ \ |Cx|^{2} -
|u|^2,
  \end{equation}
on what is called the ``reachable set" (which we do not need to 
define here).
 
 In the case
of linear systems, $V$ can be chosen to be a quadratic.
So it has 
the form $V(x)=\langle Ex,x \rangle$ with $E \succeq 0$
and $\nabla V(x)=2Ex$.

\begin{thm}
    The linear system $A,B,C$
    is dissipative if inequality {\rm\eqref{eq:dissipux}}
    holds for all $u \in \cU, x \in \cX$.
    Conversely, if $A,B,C$ is ``reachable"\footnote{A mild technical condition},
    then dissipativity implies inequality {\rm\eqref{eq:dissipux}}
    holds for all $u \in \cU$, $x \in \cX$.
\end{thm}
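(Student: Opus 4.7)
The plan is to prove the two directions separately: the implication from the pointwise inequality to dissipativity is a direct integration, while the converse requires constructing a suitable quadratic storage function, with the rechability hypothesis entering to guarantee finiteness.

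For the forward direction, fix any input $u$ and let $x(t)$ be the trajectory determined by $\dot x = Ax + Bu$ with $x(0) = 0$, so $y = Cx$. Since $V(x) = \langle Ex, x\rangle$ is smooth, the chain rule gives
$$\frac{d}{dt} V(x(t)) \,=\, \nabla V(x(t))\bigl(Ax(t) + Bu(t)\bigr),$$
and the assumed inequality \eqref{eq:dissipux} applied pointwise in $t$ yields $\frac{d}{dt} V(x(t)) \leq |u(t)|^2 - |y(t)|^2$. Integrating from $0$ to $T$ and using $V(x(0)) = V(0) = 0$ and $V(x(T)) \geq 0$, we get $\int_0^T |y|^2 \, dt \leq \int_0^T |u|^2 \, dt$; letting $T\to\infty$ (with an approximation for general $L^2$ inputs) produces dissipativity.

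For the converse, I would follow the Willems-style construction of the \emph{available storage}
$$V_a(x_0) \,:=\, \sup_{u,\, T \geq 0}\, \int_0^T \bigl(|y(t)|^2 - |u(t)|^2\bigr)\, dt,$$
the supremum ranging over $T \geq 0$ and inputs $u$ on $[0,T]$ yielding a trajectory with $x(0) = x_0$. Taking $T = 0$ gives $V_a \geq 0$; dissipativity applied to trajectories starting at the origin forces $V_a(0) = 0$; reachability is what ensures $V_a(x_0) < \infty$ for every $x_0 \in \cX$, since an input reaching $x_0$ from $0$ followed by a candidate extractor produces a test trajectory whose energy balance bounds $V_a(x_0)$. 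A standard dynamic-programming argument then shows $V_a$ satisfies the integral storage inequality on arbitrary trajectories, and differentiating this inequality in the limit $t_2 - t_1 \downarrow 0$ recovers the pointwise inequality \eqref{eq:dissipux} for all $(x,u)$.

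The main obstacle is verifying that $V_a$ takes the quadratic form $\langle E x, x\rangle$ with $E \succeq 0$, as asserted in the discussion preceding the theorem. The key observation is that linearity of the dynamics together with the quadratic supply rate $|u|^2 - |y|^2$ makes the rescaling $(x_0, u) \mapsto (\lambda x_0, \lambda u)$ scale the integrand by $\lambda^2$, so $V_a$ is positively homogeneous of degree two; combining this with convexity of $V_a$ (inherited from the convexity of the functional being optimized and affineness of the reachable set in $(x_0, u)$) and a polarization argument yields quadraticity. At this point one may alternatively invoke the Kalman–Yakubovich–Popov lemma, which identifies the resulting matrix $E$ as a solution of a linear matrix inequality equivalent to \eqref{eq:dissipux}, thereby certifying both positivity of $E$ and the pointwise form of the dissipation inequality.
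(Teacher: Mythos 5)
The paper states this theorem without proof — it is presented as background from systems theory (a Willems-type storage function result) — so there is no in-paper argument against which to compare your proposal; I can only assess it on its own merits.

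Your forward direction is correct: the pointwise differential inequality \eqref{eq:dissipux}, applied along a trajectory and integrated using $\frac{d}{dt}V(x(t)) = \nabla V(x(t))(Ax(t)+Bu(t))$, together with $V(0)=0$ and $V\ge 0$, gives the $L^2$ gain inequality, and this is exactly the intended argument. Your converse is also the standard Willems route via the available storage $V_a$, and the roles you assign to the hypotheses (positivity from $T=0$, vanishing at $0$ from dissipativity, finiteness from reachability, the dissipation inequality from dynamic programming, \eqref{eq:dissipux} by differentiating) are all right.

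There is, however, a genuine gap in the step you flag as the main obstacle. You argue that $V_a$ is quadratic because it is positively homogeneous of degree two and convex, and that a polarization argument finishes. This does not work: $\max(x_1^2,x_2^2)$ is convex, finite, and homogeneous of degree two on $\mathbb{R}^2$ but is not a quadratic form, so convexity plus degree-two homogeneity plus polarization cannot by themselves yield quadraticity. (Polarization requires the parallelogram law, which convexity does not supply.) The correct way to see $V_a$ is quadratic is to use that $V_a(x_0)=\sup_u Q(x_0,u)$ where $Q$ is a fixed quadratic form on $\cX\oplus L^2$; finiteness of the sup forces the $u$-block of $Q$ to be negative semidefinite, and the supremum is then a Schur complement of that block, hence a quadratic form in $x_0$ — this is standard LQ theory and is exactly what the Kalman--Yakubovich--Popov lemma packages for you. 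Since you note that one "may alternatively invoke" KYP, you effectively have the right fallback, but the convexity-plus-polarization argument you lead with should be removed or replaced by the Schur-complement/KYP argument, as it is not a valid route to quadraticity.
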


 In the linear  case,
 we may substitute $\nabla V(x)= 2 Ex$ in (\ref{eq:dissipux}) to obtain
$$
 0\geq 2 (Ex)^\ss (Ax+Bu) + |Cx|^{2} - |u|^{2}, \ \
$$
for all $u,x$. 
Then maximize in $x$ to get
$$
 0 \geq x^\ss[EA + A^{\ss}E + E BB^{\ss} E + C^{\ss}C]x .
$$
Thus the classical {\it Riccati matrix inequality}
\index{Riccati matrix inequality}
\begin{equation}
 \label{eq:ricatti}
 0 \succeq \ EA + A^{\ss}E + EBB^{\ss}E + C^{\ss}C  \quad \text{with}\quad \quad E  \succeq 0
\end{equation}
ensures dissipativity of the system;  and, it turns out,  is also
implied by dissipativity when the system is reachable.

It is inequality  \eqref{eq:ricatti},
applied in many many contexts, which leads to positive semidefinite inequalities throughout all of linear systems theory.

As an aside we return to the very special case of 
dissipativity, namely   Lyapunov
stability, described in Chapter 2.2.1.
Our discussion starts with the ``miracle of inequality \eqref{eq:ricatti}":
when $B=0$ it becomes the Lyapunov inequality.
However, this is merely magic (no miracle whatsoever); the trick being that
the if input $u$ is identically zero, then dissipativity implies stability.
The converse is less intuitive, but true:  stability of $\dot{x}= Ax$
implies
existence of a ``virtual" potential energy $V(x)=\langle Ex,x \rangle$  and output $C$ making the ``virtual" system dissipative.

\subsubsection{Schur Complements and Linear Matrix Inequalities}
 \label{subsubsec:complement}

Using Schur complements,\index{Schur complement}
\iffalse
\footnote{The \df{Schur complement} 
 of
a matrix (with pivot $\delta^{-1}$) is defined by
$$
 {\rm SchurComp} \, \begin{bmatrix}
 \alpha & \beta \\ \ \gamma  & \delta \end{bmatrix}: =
 \alpha - \beta \delta^{-1}\gamma.
$$
A key fact is:
if $\delta $ is invertible,
 then the matrix is positive semidefinite if and only if
$\gamma=\beta^{\ss}$, $\delta \succ 0$ and its Schur complement is positive semidefinite.
 }
 \fi
  the Riccati inequality of equation
(\ref{eq:ricatti}) is equivalent to the inequality
\bes
 L(E):=\begin{bmatrix} EA+A^{\ss}E+C^{\ss}C & EB\\
 B^{\ss}E & -I   \end{bmatrix} \preceq 0.
\ees
Here $A$, $B$, $C$
describe the system and $E$ is an unknown matrix. If the system is
reachable, then $A$, $B$, $C$ is dissipative if and only if $L(E)
\preceq 0$ and $E \succeq 0$.

The key feature in this reformulation of the Riccati
inequality is that $L(E)$ is linear in $E$, so
the inequality $L(E)\preceq 0$ is   a
\df{Linear Matrix Inequality} (LMI)\index{linear matrix inequality}\index{LMI}\index{inequality!linear matrix} in $E$.

\subsubsection{Putting it together}

We have shown two ingredients of linear system theory,
connection laws (algebraic) and dissipation (inequalities),
but have yet to put them together.
It is in fact a very mechanical procedure.
After going through the procedure one sees that the problem a
software toolbox designer  faces is this:
\begin{quote}
(GRAIL) \ \ 
Given a symmetric matrix of nc polynomials
$$p(a,x)= \Big[ p_{ij}(a,x) \Big]_{i,j=1}^k,$$
and a tuple of matrices $A,$  provide an
algorithm for finding $X$ making $p(A,X)\succeq0$ or
better yet as large as possible.
\end{quote}
Algorithms for doing this are based on numerical 
optimization or a close relative, so even if they find a local
solution there is  no guarantee that it is global.  
If $p$ is convex in $X$, then these problems disappear.

 Thus, systems problems described by
 signal flow diagrams produce 
a mess of matrix inequalities\index{matrix inequality}\index{inequality!matrix} with some matrices
known and some unknown and the constraints that
some polynomials are positive semidefinite.
The inequalities can get very complicated as one might 
guess, since signal flow diagrams get complicated.
These considerations thus naturally lead to 
the emerging subject of free real algebraic geometry, 
the study of noncommutative (free) polynomial inequalities
and free semialgebraic sets.  Indeed, much of what is known
about this very new subject is touched on in this chapter.

The engineer would like for these polynomial inequalities 
to be convex in the unknowns. Convexity guarantees that 
local optima are global optima 
(finding global optima is often of paramount importance)
and facilitates numerics.

Hence the major issues in linear systems theory are:
\ben[\rm(1)]
\item
  {\it  Which problems convert to a convex matrix inequality?
How does one do the  conversion?}
\item
 {\it Find numerics which will solve large convex problems.
How do you use special structure, such as most unknowns are matrices
and the formulas are all built of noncommutative rational functions?
}
\item
{\it
    Are convex matrix inequalities
    more general than LMIs? }
\een

The mathematics here  can be motivated
by the problem of writing a toolbox for engineers to use
in designing linear systems. 
What goes in such toolboxes is algebraic formulas
 with matrices $A,B, C$
unspecified and reliable numerics for solving them when a user
does specify $A,B, C$ as matrices.
A user who designs a controller
for a helicopter puts in the mathematical systems model for his
helicopter and puts in matrices, for example, $A$ is a particular
${8 \times 8}$ real matrix etc.
Another user who designs a satellite
controller might have a 50 dimensional state space and of course
would pick completely different $A,B,C$.
Essentially any matrices
of any compatible dimensions can occur.
Any claim we make about our formulas
 must be valid
regardless of the size of the matrices plugged in.

The toolbox designer faces two completely different tasks.
One is manipulation of algebraic inequalities;
 the other is numerical solutions.
 Often the first is far more daunting
 since the numerics is handled by some standard
 package (although for numerics problem size is a demon).
 Thus there is a great need for algebraic theory.
Most of this chapter bears on  questions like $(3)$ above
where
the unknowns are matrices.
{\it The first two questions will not be addressed.}
Here we treat (3) when there are no $a$ variables.
When there are $a$ variables see \cite{HHLM08,BMproc}.
Thus we shall consider  polynomials $p(x)$
in free noncommutative variables $x$ and focus on their convexity on 
free semialgebraic sets.

\smallskip

What are the implications of our study for engineering?
Herein you will see strong results on free convexity
but what do they say to an engineer?
We foreshadow the forthcoming answer by saying it is fairly negative,
but postpone further disclosure till the final page of these
writings not so much to promote suspense, but
for the conclusion to arrive  after you have absorbed the theory.

\subsubsection{Quantum Phenomena}
Free\index{Positivstellensatz} Positivstellens\"atze - algebraic certificates for positivity -
 of which Theorem \ref{thm:grandpos} is the grandad, have
 physical applications. 
Applications to quantum physics are explained
by Pironio, Navascu\'es, Ac\'\i n \cite{PNA10} 
who also consider computational aspects related to 
noncommutative sum of squares.\index{sum of squares}
How this pertains to operator algebras is discussed by 
Schweighofer and the second author in \cite{KlSch:08}.
The important 
Bessis-Moussa-Villani conjecture (BMV) from quantum statistical mechanics
is tackled in \cite{KlSch:09a,CaKlPo:10}.
Doherty, Liang, Toner, Wehner \cite{Wehn:08}
employ noncommutative positivity and the Positivstellensatz \cite{HM04b} 
of the first and the third author to consider
the quantum moment problem and multi-prover games.

A particularly elegant recent development, independent of the
line of history containing the work in this chapter, was initiated by
Effros. The classic ``perspective" transformation  carries a function
on $\RR^n$ to a function on $\RR^{n+1}$. It is used for various purposes,
one being in
algebraic geometry to produce ``blowups" of singularities
thereby  removing  them. It has the property that convex functions
map to convex functions. What about convex functions on free variables?
This question was asked by Effros and settled affirmatively
in \cite{E09}
for natural cases as a way to show that quantum relative
entropy is convex.
Subsequently, \cite{ENGprept} showed that the perspective
transformation  in free variables
always maps convex functions to convex functions.

\subsubsection{Miscellaneous  applications}

A number of other scientific disciplines use free analysis, though less
  systematically than in free real algebraic geometry. 

\noindent
{\bf Free probability.}
\index{free probability}
Voiculescu developed it to attack  one of
the purest of mathematical questions regarding 
 von Neumann algebras.
From the outset (about 20 years ago)
 it was elegant and it came to have great
depth. Subsequently, it was discovered to bear
forcefully and effectively on random matrices.
The area is vast, so we do not dive in
but refer the reader to an introduction \cite{AIM,VDN}.

\noindent
{\bf Nonlinear engineering systems.} \
A classical technique in nonlinear  systems theory 
developed by Fliess is based on manipulation
of power series with noncommutative variables
(the Chen series).  The area
has a new impetus coming  from the problem of data compression,
so now is a time when these correspondences are being worked out,
cf. \cite{GL,GT,LCL}.

\subsection{Further reading}
 We pause here to offer some suggestions for further reading.
  For further engineering motivation we recommend the paper
  \cite{SI95} or the longer version
  \cite{SIG97}
  for related new directions.
 Descriptions of Positivstellens\"atze\index{Positivstellensatz} are in the surveys  
 \cite{HKManjos, OHMP09, HP07,SmuSurvey}
 with the first three also briskly touring free convexity.\index{free convexity}
 The survey article \cite{HMPV} is aimed at engineers. 

  Noncommutative is a broad term, encompassing
  essentially all algebras.  
  In between the extremes
  of commutative and free lie many important topics, 
  such as Lie algebras, Hopf algebras, quantum groups, 
$C^*$-algebras, von Neumann algebras, etc.
  For instance, there are elegant
  noncommutative real algebraic geometry
  results for the  Weyl Algebra \cite{S05},   
cf.~\cite{SmuSurvey}.

\subsection{Guide to the chapter}
 The goal of this tutorial is to introduce
  the reader to the main results and techniques used
  to study free convexity.  Fortunately, the subject is 
 new and the techniques not too numerous
  so that one can quickly become an expert. 

 The basics of free, or nc, polynomials and their evaluations
  are developed
  in Section \ref{sec:basics}. The key notions are 
  positivity and convexity for free polynomials. 
  The principal fact is that the second directional derivative
 (in direction $h$) of a free convex
  polynomial is a  positive quadratic polynomial in $h$
 (just like in the commutative case).
 Free quadratic (in $h$) polynomials  have a Gram type representation
 which thus figures prominently in studying convexity.  
 The nuts and bolts of this Gram representation and 
  some of its consequences, including Theorem
  \ref{thm:grandpos},  are the subjects of
  Sections \ref{sec:middle-border} and \ref{sec:qposSS} respectively.

 The Gram representation techniques actually require only 
 a small amount of convexity and thus there is a theory of 
 geometry on free varieties having signed (e.g.~positive) curvature.
 Some details are in Section \ref{sec:variety}.

 A couple of free real algebraic geometry results which have 
 a heavy convexity component are described in the last section,
 Section \ref{sec:convexsemialg}.
 The first is an optimal free convex Positivstellensatz which
 generalizes Theorem \ref{thm:grandpos}.  The second says that
 free convex semialgebraic sets are free
 spectrahedra, giving another example of the much more rigid
 structure in the free setting.

 Section \ref{sec:compAlg} 
 introduces software which handles free noncommutative computations.
 You may find it useful 
  in your free studies.

\smallskip
 In what follows, mildly incorrectly, but in keeping with
 the usage in the literature,
 the terms  noncommutative (abbreviated nc) and free 
 are used synonymously.

\section{Basics of nc Polynomials and their Convexity}
 \label{sec:basics}

 This section treats the  basics of polynomials in 
 nc variables, nc differential 
 calculus, and nc inequalities. There is
 also a brief introduction to nc rational functions and 
 inequalities.

\subsection{\Nc\   polynomials}
 \label{subsec:NCpoly}
 Before turning to the formalities,
 we give, by examples,  an informal introduction
 to noncommutative (nc) polynomials.
\index{polynomial}\index{nc polynomial}\index{noncommutative polynomial}\index{polynomial!nc}\index{polynomial!noncommutative}

 A noncommutative polynomial $p$ is a polynomial in
 a finite set $x=(x_1,\dots,x_g)$ of relation free variables.  A canonical
 example, in the case of two variables $x=(x_1,x_2),$ is
the \df{commutator}
\beq\label{eq:commutator}
  c(x_1,x_2)=x_1x_2-x_2x_1.
\eeq 
 It is precisely the fact that $x_1$ and $x_2$ do not commute that
 makes $c$ nonzero. 

 While a commutative polynomial $q\in \RR[t_1,t_2]$ is
 naturally evaluated at points $t \in \RR^2$, nc polynomials
 are naturally evaluated on tuples of square matrices.  For
 instance, with
\[
  X_1=\begin{bmatrix} 0 & 1 \\ 1 & 0 \end{bmatrix}, \ \ 
  X_2=\begin{bmatrix} 1 & 0 \\ 0 & 0 \end{bmatrix},
\]
  and $X=(X_1,X_2)$, 
 one finds
\[
  c(X) = \begin{bmatrix} {\color{white}-}0 & 1 \\ -1 & 0 \end{bmatrix}.
\]

  Importantly, $c$ can be evaluated on any pair $(X,Y)$ of
  symmetric matrices of the same size. (Later in the section  we will also 
  consider evaluations involving not necessarily symmetric
  matrices.)  Note that 
  if $X$ and $Y$ are $n\times n$, then $c(X,Y)$
  is itself an $n\times n$ matrix.  In the case
  of $c(x,y)=xy-yx$, the matrix
  $c(X,Y)=0$ if and only if $X$ and $Y$ commute. In
  particular, $c$ is zero on $\RR^2$ ($2$-tuples of $1\times 1$
  matrices). 

For another example, if $d(x_1,x_2)=1+x_1 x_2 x_1$, then with $X_1$ and
$X_2$ as above, we find
\[
d(X) = I_2 + X_1X_2X_1 = \begin{bmatrix}
1 & 0 \\ 0 & 2
\end{bmatrix}.
\]

  Note that although  $X$ is a tuple of symmetric matrices, 
  it need not be the case that $p(X)$ is symmetric. 
  Indeed, the matrix $c(X)$ above is not. 
  In the present context, 
  we say that $p$ is {\it symmetric},
  \index{symmetric polynomial}\index{polynomial!symmetric} if $p(X)$ is symmetric
  whenever $X=(X_1,\dots,X_g)$ is a tuple of symmetric
  matrices.  Another more algebraic  definition  
  of symmetric for nc polynomials
  appears in  Section \ref{sec:gitmo}.

\subsubsection{Noncommutative convexity for polynomials}
  Many standard notions for polynomials, and even functions,
  on $\RR^g$  extend
  to the nc setting, though often with unexpected ramifications.  For example,
  the commutative polynomial $q\in\RR[t_1,t_2]$ is convex if,
  given $s,t\in\RR^2$,
\[
    \frac12 \big(q(s)+q(t)\big) \ge q\Big(\frac{s+t}{2}\Big).
\]

  There is a natural ordering on symmetric $n\times n$ 
  matrices defined by $X\succeq Y$ if the symmetric
  matrix $X-Y$ is positive semidefinite; i.e., if its
  eigenvalues are all nonnegative.  
  Similarly, $X\succ Y$, if $X-Y$ is positive definite; 
  i.e., all its eigenvalues are positive.  This
  order yields a canonical notion of
  convex nc polynomial. 
  Namely, a symmetric polynomial $p$ is {\it convex} 
  \index{convex polynomial}\index{polynomial!convex}
  if for each $n$ and each pair of $g$ tuples of $n\times n$
  symmetric matrices $X=(X_1,\dots,X_g)$ and $Y=(Y_1,\dots,Y_g)$,
 we have
\[
    \frac12 \big(p(X)+p(Y)\big) \succeq p\Big(\frac{X+Y}{2}\Big). 
\]
  Equivalently,
\begin{equation}
 \label{eq:convexp}
    \frac{p(X)+p(Y)}2 - p\Big(\frac{X+Y}{2}\Big)  \succeq 0.
\end{equation}

 Even in one variable, convexity for an nc polynomial
 is a serious constraint.  For instance, consider 
 the polynomial $x^4$.  It is symmetric, but with 
$$
 X=\begin{bmatrix} 4&2\\2&2\end{bmatrix}
 {\rm  and }\; 
 Y=\begin{bmatrix} 2&0\\0&0\end{bmatrix}
$$
 it follows that 
$$
\frac{X^4+ Y^4}2- \Big(\frac12 X+\frac12 Y\Big)^4 =
\begin{bmatrix} 164&120\\120& {\color{white}0}84\end{bmatrix}
$$
 is not positive semidefinite. Thus $x^4$ is not convex.
 
\subsubsection{Noncommutative polynomial inequalities and convexity}\label{subsec:ncsa}
 The study of polynomial inequalities, real algebraic geometry or
 semialgebraic geometry, has a nc version.  A 
 \df{basic open  semialgebraic set} is a subset of $\RR^g$
  defined by  a list of polynomial inequalities; i.e.,
 a set $S$ is a basic open semialgebraic set\index{semialgebraic set} if
\[
  S=\{ t\in\RR^g \colon p_1(t)>0,\ldots,p_k(t)> 0\}
\] 
  for some polynomials $p_1,\dots,p_k\in\RR[t_1,\dots,t_g]$.

\def\nos{100}
\begin{center}
\begin{tikzpicture}[domain=-2:2,scale=2] 
\draw[very thin,color=gray] (-1.6,-1.6) grid (1.6,1.6);
\draw[color=black, domain=1:-1, very thick, dashed, samples=\nos] plot (\x,sqrt{(sqrt{(1-(\x)^4)})});
\draw[color=black, domain=1:-1, very thick, dashed, samples=\nos] plot (\x,-sqrt{(sqrt{(1-(\x)^4)})});
\draw[color=black, domain=-1:1, very thick, dashed, samples=\nos] plot (\x,sqrt{(sqrt{(1-(\x)^4)})});
\draw[color=black, domain=-1:1, very thick, dashed, samples=\nos] plot (\x,-sqrt{(sqrt{(1-(\x)^4)})});
 \fill[color=blue!20, domain=1:-1, samples=\nos] plot (\x,sqrt{(sqrt{(1-(\x)^4)})}); 
 \fill[color=blue!20, domain=1:-1, samples=\nos] plot (\x,-sqrt{(sqrt{(1-(\x)^4)})});
 \fill[color=blue!20, domain=-1:1, samples=\nos] plot (\x,sqrt{(sqrt{(1-(\x)^4)})}); 
 \fill[color=blue!20, domain=-1:1, samples=\nos] plot (\x,-sqrt{(sqrt{(1-(\x)^4)})});
% the next line is fake-to make up for tikz shortcoming
\fill[blue!20] (0,0) circle (1);
\draw[->] (-1.7,0) -- (1.7,0) node[right] {$t_1$}; \draw[->] (0,-1.7) -- (0,1.7) node[above] {$t_2$};
\draw[color=black] (1,0) circle (0.02) node[below right] {$1$};
\draw[color=black] (0,1) circle (0.02) node[above left] {$1$};
\end{tikzpicture} 
\[
  {\rm ncTV}(1) = \{(t_1,t_2)\in\RR^2 \colon 1-t_1^4-t_2^4 > 0\}.
  \]
\end{center} 

  Because noncommutative polynomials are evaluated
 on tuples of matrices, 
  a nc (free) basic open semialgebraic set is a 
  sequence. 
  For positive integers $n$, let $\gtupn$
  denote the set of $g$-tuples of $n\times n$ symmetric matrices.
  Given symmetric nc polynomials $p_1,\dots,p_k$, let
\[
   \cP(n)=\{ X\in\gtupn \colon p_1(X)\succ 0, \ldots, p_k(X)\succ0\}.
\]
  The sequence $\cP=(\cP(n))$ is then a 
  {\it nc $($free$)$ basic open  semialgebraic set}.\index{free semialgebraic set}\index{semialgebraic set!free}\index{semialgebraic set!noncommutative}\index{semialgebraic set!nc}\index{nc basic open semialgebraic set}
   The sequence
\[
  {\rm ncTV}(n) =\{X\in\twotupn\colon I_n - X_1^4 - X_2^4 \succ 0\}
\]
 is an entertaining example. 
 When $n=1,$ ncTV$(1)$ 
  is a subset of $\RR^2$ often called the \df{TV screen}. 
  Numerically it can be verified, though it rather
  tricky to do so  
  (see Exercise \ref{exe2:eval}) 
  that the set ${\rm ncTV}(2)$ is not  a convex set. 
  An analytic proof
  that ${\rm ncTV}(n)$ is not a  convex set for some $n$ 
  can be found in \cite{DHM07a}.  It also follows by combining
  results in \cite{HMppt} and \cite{HV07}. 
For properties of the classical commutative
TV screen, see  the Chapters  6of Nie and 
5 by Rostalski-Sturmfels in this book. 

\bexa
Let $p_\eps:= \eps^2-\sum_{j=1}^g x_j^2$.  Then 
the \df{$\eps$-neighborhood
of $0$},
\bes
\cN_\eps:= \bigcup_{n\in\NN} \{ X\in \gtupn \colon p_\eps(X)\succ0\}
\ees
is an important example of a nc basic open semialgebraic set.
\eexa

\subsection{\Nc \ polynomials, the formalities}
 \label{sec:gitmo}
  We now take up the formalities of nc polynomials,
  their evaluations, convexity, and positivity.
  
Let $x=(x_{1},\ldots,x_{g})$ denote a $g$-tuple
 of free noncommuting variables
 and let  $\RR\ax$ denote the associative
$\RR$-algebra freely generated by $\x$, i.e., the elements of $\RR\ax$
are polynomials in the noncommuting variables $\x$ with coefficients
in $\RR$. Its elements are called {\it $($nc$)$ polynomials}.
 \index{nc polynomial}\index{polynomial}\index{polynomial!nc}
An element of the form $aw$ where $0\neq a\in \RR$ and
$w$ is a \df{word} in the variables $x$
  is called a \df{monomial} and $a$ its
 \df{coefficient}.
  Hence words are monomials whose coefficient is
$1$.  Note that the empty word $\emptyset$ plays the role
 of the multiplicative identity for $\RR\ax$.

 There is a natural \df{involution} ${}^\ss$  on $\RR\ax$ 
 that reverses words. For example, 
$(2-  3 x_{1}^2 x_{2} x_{3})^\ss =2  -3 x_{3} x_{2} x_{1}^2.$
  A polynomial $p$ is a \df{symmetric polynomial} 
 \index{polynomial!symmetric} if $p^\ss=p$.  
 Later we will see that this notion of symmetric is equivalent
 to that in the previous subsection. For now we note that
 of
\[
 \begin{split}
  c(x)&=  x_1x_2  - x_2 x_1 \\
  j(x)&=  x_1 x_2 + x_2 x_1 \\
 \end{split}
\]
  $j$ is symmetric, but $c$ is not. Indeed, $c^\ss=-c$.
  Because $x_j^{\ss}=x_j$ we refer to the variables 
  as \df{symmetric variables}.
  Occasionally we emphasize this point by writing
 $\RR\mathord{<} x=x^{\ss} \mathord{>}$  for $\RR\ax$.

 The \df{degree}  of an nc polynomial $p$, denoted $\deg(p)$, is 
 the length of the longest word appearing in $p$.  For  instance 
 the polynomials $c$ and $j$ above both have degree two and the
 degree of
\[
  r(x)=1 - 3x_1x_2-3x_2x_1 - 2 x_1^2x_2^4x_1^2 
\]
  is eight. 
 Let $\RR\ax_k$ \index{$\RR\ax_k$} denote the 
 polynomials of degree at most $k$.
 \index{degree!at most $k$}

\subsubsection{\Nc \ matrix polynomials}
 Given positive integers $d,d^\prime\in\NN$, let $\Mdd\ax$ denote the
  $d\times d^\prime$ matrices with entries from $\RR\ax$.  
  Thus elements of $\Mdd\ax$ are \df{matrix-valued nc polynomials}.
  The involution on $\RR\ax$ naturally extends to 
 a mapping $\ss:\Mdd\ax\to \RR^{d'\times d}\ax$. 
  In particular, if
\[
   P = \begin{bmatrix} p_{i,j} \end{bmatrix}_{i,j=1}^{d,d^\prime} 
       \in \Mdd\ax,
\]
  then
\[
  P^\ss = \begin{bmatrix} p_{j,i}^\ss \end{bmatrix}_{i,j=1}^{d,d^\prime} 
       \in \RR^{d^\prime\times d}\ax.
\]
  In the case that $d=d^\prime$, such a $P$ is symmetric if $P^\ss=P$.
  
\subsubsection{Linear pencils}
Given a positive integer $n$, let 
  $\Snn$ denote the real  symmetric $n\times n$ matrices.
   For $A_0,A_1,\dots,A_{g} \in \SRdd$,  the expression
\beq\label{eq:pencil}
  L(x)=A_0+\sum_{j=1}^{g} A_j x_j \in \SRdd\ax
\eeq
  in the noncommuting variables $x$
  is a \df{symmetric  affine linear pencil}.
 \index{affine linear pencil}\index{symmetric affine linear pencil}
\index{linear pencil}\index{linear pencil!affine}\index{linear pencil!symmetric}
 In other words, these are precisely the
 symmetric degree one matrix-valued nc polynomials.
If $A_0=I$, then $L$ is \df{monic}.\index{linear pencil!monic}
If $A_0=0$, then $L$ is a \df{linear pencil}. 
 \index{linear pencil} \index{homogeneous linear pencil}\index{linear pencil!homogeneous}
The homogeneous linear part $\sum_{j=1}^{g} A_j x_j$ of a linear
pencil $L$ as in \eqref{eq:pencil} will be denoted by $L^{(1)}$.

\begin{example}\rm
  Let
\[
  A_1 = \begin{bmatrix} 0 & 1 & 0 & 0 \\ 1 & 0 & 0 &0 \\
            0 & 0 & 0 & 0\\ 0 & 0 &0 &0 \end{bmatrix}, \quad
  A_2 = \begin{bmatrix} 0 & 0 & 0 &0\\ 0 & 0 & 1 & 0\\
            0 & 1&0&0\\ 0&0&0&0\end{bmatrix}, \quad
  A_3 =\begin{bmatrix} 0&0&0&0\\ 0&0&0&0\\ 0&0&0&1\\0&0&1&0\end{bmatrix}.
\]
  Then
\[
  I+\sum A_j x_j 
   =\begin{bmatrix} 1 & x_1 &0&0\\x_1&1&x_2&0\\0&x_2&1&x_3\\0&0&x_3&1
  \end{bmatrix}
\]
is the corresponding monic affine linear pencil.
\end{example}

\subsubsection{Polynomial evaluations}
 If $p\in\Mdd\ax$ is an nc polynomial and
  $X\in\gtupn$, 
 the \df{evaluation}\index{evaluation!polynomial}\index{polynomial!evaluation} $p(X)\in\RR^{dn\times d'n}$ is defined by 
 simply replacing $x_{i}$ by $X_{i}$. 
  Throughout we use lower case letters for variables and 
  the corresponding capital 
   letter for matrices 
  substituted for that variable.

\bexa
Suppose 
$
  p(x)=A x_1 x_2   
$
 where
$
A=\begin{bmatrix}
-4&2\\
{\color{white}-}3&0
\end{bmatrix}. 
$
That is,
$$
p(x)= \begin{bmatrix}  -4 x_1 x_2 & 2 x_1 x_2 \\ {\color{white}-}3x_1 x_2 & 0 \end{bmatrix}.    
$$
Thus $p \in \RR^{2 \times 2}\ax$ and 
one example of an  evaluation is
$$
p\left( \begin{bmatrix}
0&1 \\
1 & 0
\end{bmatrix},
\begin{bmatrix}
1&{\color{white}-}0\\
0&-1
\end{bmatrix}
\right)=
A\otimes \left( \begin{bmatrix}
0&1 \\
1 & 0
\end{bmatrix}
\,
\begin{bmatrix}
1&{\color{white}-}0\\
0&-1
\end{bmatrix}
\right)=
A\otimes \left( \begin{bmatrix}
0&-1 \\
1 & {\color{white}-}0
\end{bmatrix}
\right)
$$
$$
=
\begin{bmatrix}
 {\color{white}-}0 & {\color{white}-}4 &  0 & -2 \\
 -4 & {\color{white}-}0 &  2 & {\color{white}-}0 \\
 {\color{white}-}0 & -3 & 0 & {\color{white}-}0 \\
 {\color{white}-}3 & {\color{white}-}0 &  0 & {\color{white}-}0 \\
\end{bmatrix}.
$$
 
Similarly, if $p$ is 
 a constant matrix-valued nc polynomial,
  $p(x)=A,$ and $X\in\gtupn$, 
then $p(X)=A\otimes I_n$.
Here we have taken advantage of the usual
 {\em tensor} (or {\em Kronecker}) {\em product}\index{tensor product}\index{tensor product!Kronecker} of matrices. 
  Given an $\ell\times \ell^\prime$ matrix  $A=(A_{i,j})$ and
  an $n\times n^\prime$ matrix  $B$, by definition, 
  $A\otimes B$ is the $n\times n^\prime$ {\it block} matrix
\[
  A\otimes B = \begin{bmatrix} A_{i,j}B \end{bmatrix},
\]
  with $\ell \times \ell^\prime$ matrix entries.
  We have reserved the tensor product notation for the tensor product
of matrices and have eschewed the strong temptation of using 
  $A\otimes x_{\ell}$ in place of
$Ax_{\ell}$ when $x_{\ell}$ is one of the variables.
\eexa

\begin{proposition}
 \label{prop:faith}
  Suppose $p\in\RR\ax.$   
  In increasing levels of generality,
\ben[\rm(1)]
 \item  if $p(X)=0$ for all $n$ and all $X\in (\Snn)^g$, 
  then $p=0$;
 \item  if there is a nonempty nc basic open semialgebraic
  set $\cO$ such that $p(X)=0$ on $\cO$ 
  $($meaning for every $n$ and $X\in\cO(n)$, $p(X)=0)$, then $p=0$;
 \item there is an $N,$ depending only
  upon the degree of $p$,  so that  for any $n \ge N$ 
  if there is an open subset $O\subseteq \gtupn$ with $p(X)=0$
   for all $X\in O$, then $p=0$.
\een
\end{proposition}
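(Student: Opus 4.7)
My plan is to prove (3) first and deduce (1) and (2) from it. The basic reduction is this: for any $p\in\RR\ax$ and any $n$, the entries of the matrix $p(X)$ are ordinary commutative polynomials in the free parameters $\{\xi_{j,ab}:1\le j\le g,\ 1\le a\le b\le n\}$ describing a tuple $X\in (\Snn)^g$. Since a commutative polynomial vanishing on an open subset of a real affine space vanishes identically, (3) will follow once I show: for every $d$ there is an $N=N(d)$ such that for $n\ge N$, the map sending $p\in\RR\ax_d$ to the matrix-valued polynomial $p(X_1,\dots,X_g)$ (in the $\xi$'s) is injective.

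For the injectivity I use a standard generic-matrix computation. Evaluated on the generic symmetric matrices $X_j=[\xi_{j,ab}]_{a,b=1}^n$, a word $w=x_{i_1}\cdots x_{i_k}$ of length $k\le d$ has
\[
  w(X)_{j_0,j_k}=\sum_{j_1,\ldots,j_{k-1}=1}^{n} \xi_{i_1,j_0 j_1}\xi_{i_2,j_1 j_2}\cdots \xi_{i_k,j_{k-1}j_k}.
\]
Taking $n\ge d+1$, I single out a summand whose indices $j_0,\dots,j_k$ are pairwise distinct. The resulting monomial is squarefree in the $\xi$'s, and from it one recovers both the path $j_0\to j_1\to\cdots\to j_k$ (the ``edges'' $\{j_{s-1},j_s\}$ form a path graph on the known vertex set with prescribed endpoints $j_0$ and $j_k$) and the letter sequence $i_1,\dots,i_k$. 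Hence distinct words contribute distinct surviving monomials to some matrix entry, and no nontrivial $\RR$-linear combination $\sum a_w w$ can have $p(X)\equiv 0$. This yields (3) with $N=\deg(p)+1$.

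Parts (1) and (2) follow as corollaries. Part (1) is immediate on choosing any $n\ge N(\deg p)$. For (2), recall that a nonempty nc basic open semialgebraic set $\cO$ is closed under direct sums: if each $p_i(X)\succ0$, then $p_i(X\oplus X)=p_i(X)\oplus p_i(X)\succ 0$, so $\cO(n_0)\ne\emptyset$ forces $\cO(kn_0)\ne\emptyset$ for all $k\ge 1$. Pick $k$ with $n=kn_0\ge N(\deg p)$; since $\cO(n)$ is cut out by strict polynomial inequalities it is open in $(\Snn)^g$, and (3) applied to this open set gives $p=0$.

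The principal obstacle, modest as it is, is the combinatorial verification that the surviving squarefree monomial uniquely encodes its word. The symmetry identification $\xi_{j,ab}=\xi_{j,ba}$ would cause accidental cancellations or ambiguities only when repetitions among the chosen $j_0,\dots,j_k$ occur, which is precisely why distinctness is imposed; once imposed, the word is reconstructed by walking along the unique path between $j_0$ and $j_k$ in the edge-set of the monomial and reading off the letter labels in order. This also pins down the explicit bound $N(\deg p)=\deg(p)+1$.
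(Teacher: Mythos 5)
Your argument is correct and takes a genuinely different route from the paper's. The paper's own proof (via Exercises \ref{exe:fock}, \ref{exe:openfock}, \ref{exe:openinadim}) uses a Fock-space construction: one builds the truncated creation operators $T_j$ on $\RR\ax_k$, sets $Y_j=T_j+T_j^\ss$, and shows $p(Y)\emptyset\ne 0$ for $p\ne 0$ of degree at most $k$, which forces $n=\sigma(k)=1+g+\cdots+g^k$. Your proof instead substitutes the fully generic symmetric matrices $X_j=[\xi_{j,ab}]$ and shows that each word $w$ of length $k\le d$ contributes, to the entry $(j_0,j_k)$, a square-free commutative monomial (supported on a simple path of distinct vertices $j_0,\dots,j_k$) from which $w$ can be read back off. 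The reconstruction argument is sound: because the factors of the surviving monomial form a path with prescribed endpoints, the vertex sequence $j_1,\dots,j_{k-1}$ is forced inductively (the only edge incident to $j_0$ is $\{j_0,j_1\}$, and so on; any alternative sequence either reuses an edge, killing square-freeness, or uses an index outside the vertex set of $M$), and the letter labels $i_s$ come along for free. The reduction of (2) to (3) via closure under direct sums is also exactly right. What your route buys is the dimension bound $N=\deg(p)+1$, which is dramatically smaller than the paper's $\sigma(\deg p)$; it is also closer in spirit to the polynomial-identity approach the paper only sketches in Exercise \ref{exe:PI} (staircase matrices, non-symmetric variables), but you adapt it directly to symmetric generic matrices, which sidesteps the need to first reduce to multilinear polynomials. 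What the Fock-space route buys is conceptual reuse throughout the paper (it underlies several later exercises and the CHSY-style linear-independence arguments), whereas the generic-matrix computation is more self-contained.
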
\index{polynomial identity}\index{polynomial!vanishing}

\begin{proof}
  See Exercises \ref{exe:fock}, \ref{exe:openfock},
  and \ref{exe:openinadim}. 
\end{proof}

\bexe
 Use Proposition {\rm\ref{prop:faith}} to prove the following statement:

\begin{proposition}
 \label{prop:symmetric}
  Suppose $p\in \RR\ax$.  Show $p(X)$ is symmetric for every $n$
  and every $X \in (\Snn)^g$ if and only if $p^\ss=p$.
\end{proposition}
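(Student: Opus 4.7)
The plan is to prove both directions using the algebraic identity $p(X)^\ss = p^\ss(X)$, which holds whenever $X = (X_1,\dots,X_g)$ is a tuple of symmetric matrices, and then to invoke Proposition~\ref{prop:faith}(1) for the nontrivial direction.

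First I would verify the identity $p(X)^\ss = p^\ss(X)$ for every word $w = x_{i_1}x_{i_2}\cdots x_{i_k}$ and every tuple $X\in (\Snn)^g$. Since $(X_{i_1}X_{i_2}\cdots X_{i_k})^\ss = X_{i_k}^\ss\cdots X_{i_2}^\ss X_{i_1}^\ss = X_{i_k}\cdots X_{i_2}X_{i_1}$ (using symmetry of each $X_{i_j}$), this is precisely $w^\ss(X)$. The identity then extends by $\RR$-linearity to arbitrary $p\in\RR\ax$.

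For the easy direction, assume $p^\ss = p$. Then for any $X\in (\Snn)^g$,
\[
p(X)^\ss = p^\ss(X) = p(X),
\]
so $p(X)$ is symmetric.

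For the converse, assume $p(X)$ is symmetric for every $n$ and every $X\in (\Snn)^g$. Then $p(X) = p(X)^\ss = p^\ss(X)$, so the nc polynomial $q := p - p^\ss$ satisfies $q(X) = 0$ for all such $X$. By Proposition~\ref{prop:faith}(1), $q = 0$, i.e.\ $p^\ss = p$. The only mild step is keeping track of how the involution interacts with the Kronecker product when working with matrix-valued polynomials, but since the proposition is stated for $p\in\RR\ax$ (scalar coefficients), this is not needed and the argument is essentially a one-line reduction to the faithfulness statement already established.
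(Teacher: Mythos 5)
Your proof is correct and follows exactly the route the paper intends: establish $p(X)^\ss = p^\ss(X)$ by reversing words and using symmetry of the $X_i$, dispatch the easy direction directly, and reduce the converse to Proposition~\ref{prop:faith}(1) applied to $p-p^\ss$. The paper leaves this as an exercise with the explicit hint to use Proposition~\ref{prop:faith}, and you have filled in that argument cleanly.
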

\eexe

\label{symmetric polynomial}\index{polynomial!symmetric}

\subsection{\Nc \ convexity revisited and nc positivity}
Now we return with a bit more detail on our main theme, 
convexity.
 A 
 symmetric polynomial $p$ is \df{matrix convex},
 \index{convex polynomial}\index{polynomial!convex}
if for each
positive integer $n$, each pair of
$g$-tuples $X=(X_1,\dots,X_g)$ and $Y=(Y_1,\dots,Y_g)$ in
$(\SRnn)^g$  and each $0\le t \le 1$,
\bes
  tp(X)+(1-t)p(Y) - p\big(tX+(1-t)Y\big)\succeq 0,
\ees
where, for an $n\times n$ matrix $A\in\mathbb{R}^{n\times n}$, the notation
$A\succeq 0$ means $A$ is positive semidefinite.  Synonyms for matrix
 convex include both 
 \df{nc convex}, and simply \df{convex}.
 \index{convex} \index{nc convex}

\bexe
  Show that the definition here of $($matrix$)$ convex is equivalent to that given in
  equation \eqref{eq:convexp} in the informal introduction to nc polynomials.
\eexe

 As we have already seen in the informal introduction to
 nc polynomials, 
even in one-variable, convexity in the noncommutative setting
differs from convexity in the commutative case because here $Y$ need
not commute with $X$.
Thus, although the polynomial $x^4$ is a convex function of one real
variable, it is not matrix convex.
On the other hand,  to verify that $x^2$ is a matrix convex polynomial,
observe that
\begin{eqnarray}
\nonumber
tX^2+(1-t)Y^2&-&(tX+(1-t)Y)^2\\
\nonumber &=&t(1-t)(X^2-XY-YX+Y^2)=t(1-t)(X-Y)^2 \succeq 0.
\end{eqnarray}

A polynomial $p\in\RR\ax$ is \df{matrix
positive}, synonymously \df{nc positive} or simply \df{positive}
 \index{matrix positive}\index{positive polynomial}\index{polynomial!positive}
  if $p(X)\succeq 0$ for all tuples $X=(X_1,\dots,X_g)\in
(\SRnn)^g$.  A polynomial $p$ is a \df{sum of squares}
 if there exists $k\in\NN$ and polynomials
  $h_1,\ldots, h_k$ such that
\bes
  p=\sum_{j=1}^k h_j^{\ss} h_j.
\ees
  Because, for a matrix $A$, the matrix $A^{\Ms}A$ is positive semidefinite, 
 if $p$ is a sum of squares, then $p$ is positive. 
  Though we will not discuss its proof in this chapter, we mention
 that, in contrast with the commutative case, the converse is true
\cite{Hel02,McC01}.

\begin{theorem}\label{thm:ncsos}
  If $p\in\RR\ax$ is positive, then $p$ is a sum of squares. 
\end{theorem}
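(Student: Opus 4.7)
The plan is to prove the contrapositive by a Hahn--Banach separation combined with a GNS-type construction that produces a symmetric tuple $X$ with $\langle p(X)e,e\rangle < 0$, contradicting matrix positivity. First, I would observe that $p$ must be symmetric: $p(X)\succeq 0$ for every symmetric $X$ forces $p(X)$ to be symmetric, so by Proposition \ref{prop:faith} one concludes $p^\ss = p$, and in particular the degree $2d$ of $p$ is even.

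Next, I would fix the ambient finite-dimensional space $\RR\ax_{2d}$ and consider the cone $\Sigma$ of sums of squares lying inside it. A Carath\'eodory-type argument, which exploits the fixed degree bound to realize every element of $\Sigma$ as a sum of boundedly many squares of polynomials from $\RR\ax_d$, shows that $\Sigma$ is closed in $\RR\ax_{2d}$. Assuming for contradiction $p \notin \Sigma$, Hahn--Banach separation then furnishes a linear functional $L : \RR\ax_{2d} \to \RR$ with $L(\Sigma) \subseteq [0,\infty)$ and $L(p) < 0$. Symmetrizing gives $L(q^\ss) = L(q)$.

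The next step is a GNS-type construction. Define the bilinear form $\langle f, g \rangle := L(g^\ss f)$ on $\RR\ax_d$; it is symmetric and positive semidefinite by the choice of $L$. Quotienting by its radical yields a finite-dimensional real Hilbert space $H$ together with a distinguished vector $e = [1]$. The natural left-multiplication operators $X_j : [f] \mapsto [x_j f]$ are formally symmetric by the identity $L(g^\ss x_j f) = L((x_j g)^\ss f)$. The plan is to realize $X_j$ as genuine symmetric operators on a single finite-dimensional Hilbert space so that for every $q \in \RR\ax_{2d}$ one has the evaluation identity
\[
   L(q) = \langle q(X)e,\, e\rangle.
\]
Granting this, since $p$ is matrix positive and $X$ is a tuple of symmetric matrices, $p(X)\succeq 0$ forces $L(p)\geq 0$, contradicting $L(p)<0$.

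The main obstacle is the construction of the $X_j$ in the previous step: left multiplication by $x_j$ sends $\RR\ax_d$ into $\RR\ax_{d+1}$, which sits outside the domain where $L$ and hence the inner product is defined. One must either produce a flat extension of $L$ to a positive functional on a higher-degree truncation (this is itself a nontrivial Hahn--Banach argument exploiting positivity), or give a direct construction of the matrices $X_j$ from the Gram matrix of $L$ by using only the degree $\leq d$ moments in such a way that the evaluation identity still holds on the fixed polynomial $p$ of degree $2d$. This technical bridge between the truncated moment data and honest symmetric matrix tuples is where the noncommutative geometry genuinely enters, and it is the heart of the theorem; the rest of the proof is purely functional-analytic.
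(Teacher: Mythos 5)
The paper does not prove this theorem; immediately after observing that SOS implies positivity it says \emph{``Though we will not discuss its proof in this chapter, \ldots\ the converse is true''} and cites \cite{Hel02,McC01}. So there is no in-paper proof to compare against, and your proposal must be judged on its own merits. The architecture you sketch --- deducing $p^\ss=p$ from Proposition \ref{prop:faith}, showing the SOS cone $\Sigma$ is closed in $\RR\ax_{2d}$, separating by Hahn--Banach to get $L$ with $L(\Sigma)\subseteq[0,\infty)$ and $L(p)<0$, and running a GNS-type construction on $\RR\ax_d$ --- is exactly the strategy of the cited references, and you have put your finger on the decisive step.

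But what you have written is a roadmap with an acknowledged hole, and the hole is the theorem. You correctly observe that left multiplication by $x_j$ takes $\RR\ax_d$ out of the degree range where $L$, hence the form $\langle f,g\rangle = L(g^\ss f)$, is defined, so the formally symmetric $X_j$ are not automatically well-defined operators on the quotient $H$; you then gesture at ``a flat extension of $L$'' or ``a direct construction from the Gram matrix'' without executing either. That is precisely where the free setting succeeds and the commutative one fails (Motzkin): one must produce a rank-preserving (flat) extension of the truncated noncommutative Hankel matrix attached to $L$, which then yields an honest finite tuple of symmetric matrices $X$ and a vector $e$ with $L(q)=\langle q(X)e,e\rangle$ for all $q\in\RR\ax_{2d}$. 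This is the route of \cite{McC01}, and the paper itself names ``rank preserving extensions of truncated noncommutative Hankel matrices'' as the engine behind the more refined Theorem \ref{thm:lmipos}. A secondary issue: Carath\'eodory by itself does not give closedness of $\Sigma$; you also need that $\sum_j h_j^\ss h_j=0$ forces each $h_j=0$ (e.g.\ evaluate on the Fock-space shifts of Exercise \ref{exe:fock}) so that the cone is pointed and the usual normalization/compactness argument goes through. Neither issue is misdiagnosed in your write-up, but neither is resolved, so the theorem remains unproved.
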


 As for convexity, note that $p(x)$ is convex if and only if the
 polynomial $q(x,y)$ in $2g$ nc variables given by
\[
 q(x,y)= \frac12 \big( p(x)+p(y)\big)-p\Big(\frac{x+y}{2}\Big)
\]
 is positive.\index{convex polynomial}\index{polynomial!convex}

\subsection{Directional derivatives vs.~nc convexity and positivity}
 Matrix convexity can be formulated in terms of positivity
  of the Hessian, just as in the case of a real variable. 
 Thus we take a few moments to develop a very useful 
 nc calculus.

Given
a polynomial $p\in\RR\ax$, 
the $\ell^{\rm th}$ \df{directional derivative} of $p$ 
 in the ``direction'' $h$ is
\bes
p^{(\ell)}(x)[h]:=\left.\frac{d^\ell p(x+ t h)}{d t^\ell}\right|_{t=0}.
\ees
 Thus $p^{(\ell)}(x)[h]$ is the polynomial 
that evaluates to 
$$
\left.
\frac{d^\ell p(X+tH)}{dt^\ell}\right|_{t=0}\quad\textrm{for every choice of}\quad 
X,\,H\in\gtupn\,.
$$
We let
  $p^\prime(x)[h]$ denote the first derivative and 
 the \df{Hessian}, 
 denoted $ p^{\prime\prime}(x)[h]$
of $p(x)$, is the second directional derivative of $p$ in 
  the direction $h$.    
  
Equivalently, the Hessian of $p(x)$ can also be defined as the part of the
polynomial
$$r(x)[h]:=2\big(p(x+h)-p(x)\big)$$
in
$$\RR\ax[h]:=\RR \mathop{<} x_1,\dots,x_g,\, h_1,\dots,h_g\mathop{>}$$
that is homogeneous of degree two in $h$.

If
$p^{\prime\prime} \neq 0$, that  is, if
$p=p(x)$ is an nc polynomial of degree two or more, then
the polynomial  $p^{\prime\prime}(x)[h]$ in 
the $2g$ variables $x_1,\ldots,x_g,h_1\ldots,h_g$
is homogeneous of degree two in $h$ and
has degree equal to the degree of $p$.

\bexa 
\mbox{}\par
\ben[\rm(1)]
\item
 The Hessian of the polynomial $p=x_1^2 x_2$ is
$$
p^{\prime\prime}(x)[h]= 2(h_1^2 x_2 + h_1 x_1 h_2 + x_1 h_1 h_2).
$$
\item
 The Hessian of the polynomial $f(x)=x^4$ (just one variable) is
\[
  f^{\prime\prime}(x)[h] =2(h^2 x^2 + hxhx+ hx^2h + xhxh+xh^2x+x^2h^2).
\]
\een
\eexa

NC convexity is neatly described in terms of the Hessian. 

\begin{lem}\label{lem:CP}
$p\in\RR\ax$ is nc convex if and only if $p''(x)[h]$ is nc positive.
\end{lem}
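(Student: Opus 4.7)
The plan is to prove both implications by reducing to scalar one-variable calculus via the Taylor expansion of the matrix-valued map $t \mapsto p(X+tH)$. The key observation is that, because $p$ is a polynomial, for fixed symmetric tuples $X,H \in (\Snn)^g$ this map is a genuine (finite) polynomial in $t$ with matrix coefficients, and an easy monomial-by-monomial check gives the exact Taylor formula
\[
  p(X+tH) = \sum_{k=0}^{\deg p} \frac{t^k}{k!} \, p^{(k)}(X)[H].
\]

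For the forward implication, assume $p$ is nc convex. Fix $n$ and symmetric tuples $X,H \in (\Snn)^g$, and apply the convexity inequality to the pair $X$ and $X+2tH$ (which are again symmetric tuples of the same size). Their midpoint is $X+tH$, so
\[
  0 \preceq \frac{p(X) + p(X+2tH)}{2} - p(X+tH).
\]
Substituting the Taylor expansion above, the right-hand side equals $\tfrac{t^2}{2}\, p''(X)[H] + O(t^3)$. Dividing by $t^2$ and letting $t \to 0$ yields $p''(X)[H] \succeq 0$, as desired.

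For the backward implication, assume $p''(x)[h]$ is nc positive. Fix $n$, symmetric tuples $X,Y \in (\Snn)^g$, and a vector $v \in \RR^n$, and consider the scalar function $\psi_v(t) := \langle p(Y + t(X-Y))v,\, v\rangle$. Differentiating the Taylor formula applied with base point $Y$ and direction $X-Y$ gives
\[
  \psi_v''(t) = \langle p''(Y+t(X-Y))[X-Y]\, v,\, v\rangle.
\]
Crucially, $Y+t(X-Y)$ and $X-Y$ are both symmetric $g$-tuples of $n\times n$ matrices for every real $t$, so the positivity hypothesis on $p''$ gives $\psi_v''(t) \geq 0$. Thus $\psi_v$ is a scalar convex function on $[0,1]$, whence $\psi_v(t) \leq t\,\psi_v(1) + (1-t)\,\psi_v(0)$ for $t\in[0,1]$. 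Since $v$ was arbitrary, $t\,p(X) + (1-t)\,p(Y) - p(tX+(1-t)Y) \succeq 0$, proving matrix convexity.

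I do not anticipate a serious obstacle: the Taylor expansion is an exact finite identity verifiable on monomials, and the backward direction uses only the fact that positive semidefiniteness of a matrix is detected by its quadratic form applied to vectors. The one point requiring care is ensuring that the inputs to $p''$ appearing in the argument are always pairs of symmetric tuples of matching size, which is automatic since sums and real scalar multiples of symmetric tuples remain symmetric tuples.
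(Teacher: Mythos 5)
Your proof is correct and follows essentially the same approach as the paper's Exercise \ref{exe:geomHess}: reduce to one-variable calculus via the exact Taylor expansion of $t\mapsto p(X+tH)$ and, in the backward direction, scalarize through the quadratic forms $v\mapsto\langle p(\cdot)v,v\rangle$. The forward direction in your write-up stays at the matrix level and invokes closedness of the PSD cone rather than first passing to the functionals $f_v$, but this is a cosmetic rearrangement of the same argument.
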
\index{convex polynomial}\index{polynomial!convex}\index{polynomial!positive}

\begin{proof}
See Exercise \ref{exe:geomHess}.
\end{proof}

\subsection{Symmetric, free, mixed, and classes of  variables}
  To this point, our variables $x$ have been 
  {\it symmetric}
  \index{variables!symmetric} in the sense that,
  under the involution, $x_j^{\ss}=x_j$.
  The corresponding  polynomials, elements of
  $\RR\ax$  are then the nc 
  analog of polynomials in real variables, with
  evaluations at tuples in $\Snn$.  In 
  various applications and settings it is natural
  to consider nc polynomials in other types of variables.

\subsubsection{Free variables}
  The
  nc analog of polynomials in complex variables
  is obtained by allowing evaluations on 
  tuples $X$ of not necessarily symmetric matrices.
  In this case, the involution must be interpreted
  differently and the variables are 
  called {\it free}.\index{variables!free}

  In this setting, given the nc variables
 $x=(x_1,\dots,x_g)$, let $x^{\ss}=(x_1^{\ss},\dots,x_g^{\ss})$
  denote another collection of nc variables. 
  On the ring $\RR\axxs$ 
  \index{$\RR\axxs$} define the involution ${}^{\ss}$
   by the requiring 
   $x_j\mapsto x_j^{\ss}$; $x_j^{\ss} \mapsto x_j$; 
  ${}^{\ss}$ reverses the order of words; and linearity.   For instance,
  for
\[
  q(x)= 1+x_1^{\ss}x_2-x_2^{\ss}x_1\in\RR\axxs,
\]
 we have
\[
  q^{\ss}(x)=1+x_2^{\ss} x_1 -x_1^{\ss} x_2.
\]
  Elements of $\RR\axxs$ are polynomials in \df{free variables}
 \index{free variables} and in this setting the
  variables themselves are \df{free}. 

  A polynomial $p\in\RR\axxs$ is {\it symmetric}\index{symmetric polynomial}\index{polynomial!symmetric} provided
  $p^{\ss}=p$. In particular, $q$ above is not symmetric,  but
\begin{equation}
 \label{eq:free-exa}
  p= 1+x_1^{\ss}x_2 + x_2^{\ss}x_1
\end{equation}
 is. 
 
  A polynomial $p\in\RR\axxs$ is {\it analytic}\index{analytic polynomial}\index{polynomial!analytic}
 if
  there are no transposes; i.e., if $p$ is a polynomial
  in $x$ alone.

  Elements of $\RR\axxs$ are naturally evaluated on tuples
  $X=(X_1,\dots,X_g) \in(\Mnns)^g$. For instance, 
  if $p$ is the polynomial in equation \eqref{eq:free-exa}
  and $X=(X_1,X_2)\in (\RR^{2\times 2})^2$ where
\[
  X_1=\begin{bmatrix} 0 & 0 \\ 1 & 0 \end{bmatrix} = X_2 
\]
  then
\[
 p(X)=\begin{bmatrix} 3 & 0 \\ 0 & 1 \end{bmatrix}.
\]

 The space $\Mdd\axxs$ is defined by analogy
 with $\Mdd\ax$ and evaluation of elements in $\Mdd\axxs$
 at a tuple $X\in (\Mnns)^g$ is defined in the obvious way.

\bexe
  State and prove analogs of Propositions {\rm\ref{prop:faith}} 
   and {\rm\ref{prop:symmetric}}
  for $\RR\axxs$ and evaluations from  $(\Mnns)^g$.   
\eexe

\subsubsection{Mixed variables}
  At times it is desirable to mix free and symmetric variables.
  We won't introduce notation for this situation as it will
  generally be understood from the context.  Here
  are some examples:

\bexa
 \begin{equation}
\label{2b}
p(x) = x_1^\ss x_1 +  x_2 + \frac{3}{4}  x_1  x_2x_1^\ss,
\ \ \ \ x_2=x_2^\ss; 
\end{equation}
\bes
{\rm ric}(a_1,a_2,x) = a_1 x + x a_1^\ss - x a_2 a_2^\ss x, \ \ \ \ x=x^\ss,
\ees
 In the first case $x_1$ is free, but $x_2$ is symmetric; and
 in the second $a_1$ and $a_2$ are free, but $x$ is symmetric.
 Two additional remarks are in order about the 
 second polynomial.  First, 
 it  is a \df{Riccati polynomial}\index{polynomial!Riccati}
  ubiquitous in control theory.
 Second, we have separated the variables into {\it two classes}
  of variables,\index{variables!classes}\index{variables!mixed}
  the $a$ variables and the $x$ variable(s); 
thus $p \in\RR\mathord{<} a, x=x^{\ss} \mathord{>}$.  In applications, the
  $a$ variables can be chosen to represent known (system parameters),
 while the $x$ variables are unknown(s).    Of course,
  it could be that some of the $a$ variables are symmetric
 and some free and ditto for the $x$ variables. 
\eexa

\bexa
Various directional
derivatives of $p$ in  \eqref{2b} are    
$$
D_{x_1}p(x) [h_1] =
 h_1^\ss x_1  +  x_1^\ss h_1    
 +  \frac{3}{4}  h_1  x_2 x_1^\ss  +  \frac{3}{4}  x_1  x_2 h_1^\ss,
 \qquad
D_{x_2}p(x) [h_2] =
 h_2     
 +  \frac{3}{4}  x_1  h_2  x_1^\ss,
$$
$$
D_{x}p(x) [h] =
 h_1^\ss x_1  +  x_1^\ss h_1 + h_2    
 +  \frac{3}{4}  h_1  x_2  x_1^\ss  +  \frac{3}{4}  x_1  x_2 h_1^\ss
 +  \frac{3}{4}  x_1  h_2 x_1^\ss,
$$
\eexa

  Continuing with the variable class  warfare, consider
  the following matrix-valued example.
\bexa
Let
\bes
L(a_1,a_2,x)= 
\begin{bmatrix}
 a_1x +xa_1^\ss & a_2^\ss x     \\
 xa_2 &   1 \\
\end{bmatrix}.
\ees
  We consider 
  $L\in \RR^{2\times 2}\mathord{<} a, x=x^{\ss}\mathord{>}$; i.e.,
 the $a$ variables are free, and the $x$-variables symmetric. 
 Note that $L$ is  linear in $x$ if we consider $a_1,a_2$ fixed.
 Of course, if $a_1,a_2$ and $x$ are all scalars, then
 using 
Schur complements
tells us  there is
  a close relation between $L$ in this example and
 the Riccati of the previous example. 
\eexa

\subsection{\Nc \ rational functions}
 While it is possible to define nc functions 
  \cite{Tay73,AIM,Voi04,Voi10,Pop06,Pop10,KVV-,HKM10a,HKM10b},
 in this section we content ourselves
 with a relatively informal
  discussion  of  nc rational functions 
 \cite{Coh95,Coh06,HMV06,KVV09}.

\subsubsection{Rational functions, a gentle introduction}
  Noncommutative \df{rational expressions}
  are obtained by allowing
  inverses of polynomials.  An example 
  is the discrete time algebraic Riccati equation (DARE)
\bes
  r(a,x) = a_1^{\ss} x a_1 - (a_1^{\ss}xa_2) a_1 (a_3 + a_2^{\ss} x a_2)^{-1} (a_2^{\ss}xa_1) + a_4,
 \ \ \ \ x=x^{\ss}.
\ees
 It is a rational expression in the free variables $a$ 
 and the symmetric variable $x$,
 as is $r^{-1}$.  
 An example, in free variables, which arises in operator theory is
\beq
\label{eq:s}
  s(x)= x^\ss(1-xx^\ss)^{-1}.
\eeq

 Thus, we define (scalar) \df{nc rational expressions} 
 for free nc variables $x$  by
starting with nc polynomials and then applying
successive arithmetic operations - addition, multiplication, and
inversion. We emphasize that an expression includes the order in
which it is composed and no two distinct expressions are
identified, e.g., $(x_1)+(-x_1)$, $(-1)+(((x_1)^{-1})(x_1))$, and
$0$ are different nc rational expressions.

 Evaluation on polynomials naturally extends to rational expressions. 
 If $r$ is a rational expression in free variables and $X\in (\Mnns)^g$, 
 then $r(X)$ is defined - in the obvious way - as long as any inverses appearing actually exist.
 Indeed, our main interest is in the evaluation of a rational expression. 
 For instance, for the polynomial $s$ above in one free variable, $s(X)$ is
  defined as long as $I-XX^\ss$ is invertible and in this case,
\bes
  s(X)= X^\ss (I-XX^\ss)^{-1}. 
\ees
 Generally, a 
nc rational expression $r$ can be evaluated on a
$g$-tuple $X$ of $n \times n$ matrices in its \df{domain of
regularity}, $\dom{r}$, which is defined as the set of all
$g$-tuples of square matrices of all sizes such that all the
inverses involved in the calculation of $r(X)$ exist. For example, if
$r=(x_1 x_2 - x_2 x_1)^{-1}$ then $\dom{r} = \{ X=(X_1,X_2) \colon \det
(X_1X_2-X_2X_1)\neq 0\}$. 
 We assume that $\dom{r} \neq \emptyset.$ 
 In other words, when forming nc rational expressions
 we never invert an expression that is nowhere invertible.
 
  Two rational
 expressions $r_1$ and $r_2$ are {\it equivalent}\index{rational expressions!equivalent}
 if $r_1(X)=r_2(X)$ at any $X$ where both are defined.  For instance, for the
 rational expression $t$ in one free variable,
\[
  t(x) = (1-x^\ss x)^{-1} x^\ss,
\]
 and $s$ from equation \eqref{eq:s}, 
 it is an exercise to check that $s(X)$ is defined if and only if $t(X)$ is and moreover in
 this case $s(X)=t(X)$. Thus $s$ and $t$ are equivalent rational expressions. 
 We call an equivalence class of rational expressions a \df{rational function}.\index{nc rational function}\index{rational function!nc}
 The set of all rational functions will be denoted by $\freef$.

Here is an interesting example of an nc rational 
function with nested inverses. It is taken from \cite[Theorem 6.3]{Ber76}.

\bexa
Consider two free variables $x,y$. 
For any $r\in\freey$ let 
\beq\label{eq:preBerg}
W(r):= c\big(x, c ( x,r)^2\big) \cdot
c\big(x, c ( x,r)^{-1}\big) ^{-1}\in\freey.
\eeq
Recall that $c$ denotes the commutator \eqref{eq:commutator}.
Bergman's nc rational function is given by:\index{rational function!Bergman}
\beq\label{eq:bergRat}
b:= 
W(y)\cdot W\big( c(x,y) \big)\cdot
W\Big( c\big(x,c(x,y)\big)^{-1} \Big)\cdot
W\Big( c\big(x,c(x,c(x,y))\big)^{-1} \Big)
\in\freey.
\eeq
%\qedempty
\eexa

\bexe
Consider the function $W$ from \eqref{eq:preBerg}.
Let $R,X$ be $n\times n$ matrices and assume 
$c\big(X, c ( X,R)^{-1}\big)$ exists and is invertible. Prove:
\ben[\rm(1)]
\item
If $n=2,$ then $W(R)=0$.
\item[\rm (2)]
If $n=3,$ then $W(R)=\det (c(X,R)).$
\een
\eexe

\bexe\label{exe:berg}
Consider Bergman's rational function \eqref{eq:bergRat}.
\ben[\rm(1)]
\item
Show that on a dense set of $2\times 2$ matrices $(X,Y)$,
$b(X,Y)=0$.
\item
Prove that on a dense set of $3\times 3$ matrices $(X,Y)$,
$b(X,Y)=1$.
\een
\eexe

The moral of 
Exercise \ref{exe:berg} is that, unlike in the case of
polynomial identities, a nc rational function that
vanishes on (a dense set of) $3\times 3$ matrices need not vanish
on (a dense set of) $2\times 2$ matrices.

\subsubsection{Matrices of Rational Functions; $LDL^\ss$}
One of the main ways nc rational functions\index{rational function!matrix} occur
in systems engineering is in the manipulation of matrices of polynomials.
Extremely important is the  {\it $LDL^{\Ms}$ decomposition}.\index{LDL@$LDL^\ss$ decomposition} 
Consider the  $2 \times 2$ matrix with nc entries
$$
M = \begin{bmatrix}
a & b^{\Ms} \\
b & c
\end{bmatrix}
$$
where $a=a^\ss$. 
The entries themselves could be nc polynomials,
or even rational functions.  If $a$ is not zero,
then $M$ has 
the following decomposition
\bes
M = LDL^\Ms=
\bmat
I & 0 \\
b a^{-1} & I
\emat \bmat
a & 0 \\
0 & c - ba^{-1}b^{\Ms} 
\emat
\bmat
I & a^{-1}b^{\Ms} \\
0 & I
\emat. 
\ees
 Note that this formula holds in the case that 
  $c$ is itself a (square) matrix nc rational function
  and $b$ (and thus $b^\ss$) are vector-valued nc rational
  functions. 
On the other hand, if both $a=c=0$, then $M$ is the block matrix,
\[
  M=\begin{bmatrix} 0 & b \\ b^\ss & 0 \end{bmatrix}.
\]

If $M$ is a  $k \times k$ matrix then iterating this procedure 
produces a decomposition of a permutation  $\Pi M\Pi^\ss$
of $M$ of the form
 $\Pi M\Pi^\ss =LDL^\ss$ where 
  $D$ and $L$ have the form
\begin{equation}
 \label{eq:DinLDLT}
  D =\begin{bmatrix} d_1 & 0 & 0 & 0 & 0 & 0 &0 \\
   \vdots & \ddots & 0 & 0 &\cdots & 0 &0 \\ 
     0 & \cdots & d_k & 0 & \cdots & 0 &0  \\
   0 & \dots & 0 & D_{k+1} & \cdots & 0 &0  \\ 
   \vdots & \cdots & \vdots & \vdots & \ddots &0 &0\\
   0 & \cdots &0 & 0  &\cdots  & D_\ell &0\\
   0 & \cdots & 0 & 0 & \cdots &0& E\end{bmatrix}
\end{equation}
 and $L$ has the form,
\begin{equation}
 \label{eq:LinLDLT}
  L= \begin{bmatrix} 1 & 0 & 0 & 0 & 0 & 0 & 0  \\
   * & \ddots & 0 & 0 &0 & 0 \\ * & * & 1 & 0 & 0 & 0 & 0  \\
   * & * & * & I_2 & 0 & 0 & 0  \\ * & * & * & * & \ddots &0 & 0 \\
   * & * & * & *  & * & I_2 & 0 \\
   * & * & * & *  & * & *  &  I_a\end{bmatrix},
\end{equation}
 where
  $d_j$ are symmetric rational functions, and
  the $D_j$ are nonzero $2\times 2$ matrices of the form
\[
  D_j =\begin{bmatrix} 0 & b_j \\ b_j^\ss & 0 \end{bmatrix},
\]
  $E$ is a square $0$ matrix (possibly of size $0\times 0$ - so absent),
 and
 $I_2$ is the $2\times 2$ identity and the $*$'s represent
 possibly nonzero rational expressions (in some cases matrices
 of rational functions), some of the $0$s are zero matrices (of the appropriate
 sizes), and $a$ is the dimension of the space that $E$ acts upon.   
 The permutation $\Pi$ is necessary in cases where the procedure
 hits a 0 on the diagonal, necessitating a permutation to bring a nonzero
 diagonal entry into the ``pivot'' position.

\begin{theorem}
\label{thm:ldlpos}
  Suppose $M(x)\in \RR \plangle\x\prangle^{\ell \times \ell}$ is symmetric,
  and $\Pi M\Pi^\ss =LDL^\ss$ where $L,D$ are $\ell\times\ell$
  matrices with nc rational entries
  as in equations \eqref{eq:LinLDLT} and
  \eqref{eq:DinLDLT} and $L$ respectively.
  If $n$ is a positive integer and $X\in\gtupn$
  is in the domains of both $L$ and $D$, then 
  $M(X)$ is positive semidefinite   
  if and only if $D(X)$ is positive semidefinite.
\end{theorem}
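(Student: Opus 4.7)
The plan is to reduce the theorem to the elementary linear-algebra fact that congruence by an invertible matrix preserves positive semidefiniteness, using the unipotent structure of $L$ displayed in \eqref{eq:LinLDLT}.

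First I would note that $\Pi$ is a permutation matrix, hence orthogonal, so $M(X)\succeq 0$ if and only if $\Pi M(X)\Pi^\ss \succeq 0$. Next, since the factorization $\Pi M\Pi^\ss = LDL^\ss$ is an identity of matrices over $\freef$, evaluation at any $X$ in the common domain of $L$ and $D$ gives the matrix equation
$$
\Pi M(X)\Pi^\ss \;=\; L(X)\,D(X)\,L(X)^\ss.
$$
Here one should verify, as a preliminary, that because the iterative $LDL^\ss$ procedure constructs the entries of $L$ and $D$ only through the arithmetic operations in $\freef$, substitution commutes with these operations, so the above equation really does hold for every $X$ in the domain of each rational expression appearing in $L$ and $D$.

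The key structural step is to observe that $L(X)$ is always invertible. Indeed, by \eqref{eq:LinLDLT} the matrix $L$ is block lower triangular with diagonal blocks equal to $1$'s, $I_2$'s, and $I_a$; i.e.~$L$ is unipotent with identity diagonal. Hence $\det L(X)=1$ for every $X$ in $\dom L$, so $L(X)$ is invertible regardless of the values taken by the off-diagonal $*$ entries.

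Combining these ingredients, $L(X)D(X)L(X)^\ss$ is a genuine congruence of $D(X)$ by an invertible real matrix, and such a congruence preserves positive semidefiniteness (indeed, multiplying by $L(X)^{-1}$ on the left and $L(X)^{-\ss}$ on the right recovers $D(X)$). Chaining the equivalences gives $M(X)\succeq 0 \iff D(X)\succeq 0$, as desired. I do not expect any serious obstacle; the one thing to be careful about is the bookkeeping that guarantees the rational identity $\Pi M\Pi^\ss = LDL^\ss$ survives evaluation, which requires precisely the hypothesis that $X$ lie in the domains of both $L$ and $D$.
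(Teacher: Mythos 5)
Your proof is correct and follows essentially the same route as the paper's: the core observation in both is that $L(X)$ is block lower triangular with identity diagonal blocks, hence invertible, so $\Pi M(X)\Pi^\ss = L(X)D(X)L(X)^\ss$ is a congruence and positive semidefiniteness is preserved. Your version spells out the preliminary checks (orthogonality of $\Pi$, compatibility of evaluation with the rational identity) that the paper leaves implicit, but the argument is the same.
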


\begin{proof}
  The proof is an easy exercise based on the fact that 
  a square block lower triangular matrix whose diagonal blocks
  are invertible is itself invertible.  In this case,
  $L(X)$ is block lower triangular with the $n\times n$
  identity $I_n$  as each diagonal entry. Thus $M(X)$
  and $D(X)$ are congruent, so have the same number of negative
  eigenvalues.
\end{proof}

\begin{remark}\rm
  Note that if $D$ has any $2 \times 2$ blocks $D_j$,
  then $D(X)\succeq 0$ if and only if each $D_j(X)=0$.
  Thus, if $D$ has any $2\times 2$ blocks, generically
  $D(X)$, and hence $M(X)$, is not positive semidefinite
  (recall we assume,  without loss of generality that $D_j$
  are not zero). 
\end{remark}

\subsubsection{More on rational functions}
 \label{sec:ratgitmo}
The matrix positivity and convexity properties of nc rational functions go
just like  those for polynomials.\index{rational function!convex}\index{rational function!positive}
One only tests a rational function $r$ on matrices $X$ in its domain of regularity.
The definition of directional derivatives goes as before
and it is easy to compute them formally. 
There are issues of equivalences 
which we avoid here, instead referring the reader to
\cite{Coh95,KVV09} or our treatment in \cite{HMV06}.

We emphasize that proving the assertions above
takes considerable effort, because of dealing with the equivalence
relation. In practice one works with rational
 expressions, and calculations with nc rational
 expressions themselves are straightforward.
  For instance, computing the  derivative of a symmetric nc
rational function $r$ leads to an expression of the form 
$$
Dr(x)[h] = \text{symmetrize} \left[ \sum\limits_{\ell=1}^k
a_\ell(x) h b_\ell(x) \right],
$$
where $a_\ell, b_\ell$ are nc rational functions of $x$,
and the symmetrization of a (not necessarily symmetric) rational
 expression $s$ is $\frac{s+s^\ss}{2}$.

\subsection{Exercises}
 \label{sec:2exes}
Section \ref{sec:compAlg} gives a very brief
introduction on nc computer algebra and some might enjoy playing with 
computer algebra in working some of these exercises.

  Define for use in later exercises the nc polynomials
\bes
\begin{split}
p&= x_1^2x_2^2-x_1x_2x_1x_2- x_2x_1x_2x_1-x_2^2x_1^2	\\
q&= x_1x_2x_3+x_2x_3x_1+x_3x_1x_2- x_1x_3x_2-x_2x_1x_3-x_3x_2x_1	\\
s& =x_1x_3x_2-x_2x_3x_1.
\end{split}
\ees

\def\inv{^{-1}}

\bexe \mbox{}\par
\ben[\rm (a)]
      \item
What is the derivative with respect to $x_1$
in direction $h_1$ of $q$ and $ s$ ?\index{directional derivative}
      \item
 Concerning     the formal derivative with respect to $x_1$
in direction $h_1$. 
\ben[\rm (i)]
\item
Show the derivative of 
$r(x_1)= {x_1}^{-1}$  is  $-x_1\inv  h_1 x_1\inv$.
\item
What is the derivative of
$u(x_1,x_2)=   x_2 (1 + 2 x_1)^{-1}$  ?
\een
\een
      \eexe

\bexe
\label{exe2:eval}
Consider the polynomials $p,q,s$ and rational functions $r,u$ from above.
\ben[\rm(a)]
\item
Evaluate the polynomials $p,q,s$ on some matrices
of size $1\times 1$, $2\times 2$ and $3\times 3$.
\item
Redo part {\rm (a)} for the rational functions $r,u$.
\een
Try to use Mathematica or MATLAB.
\eexe

\bexe
 Show $c=x_1x_2-x_2x_1$ is not symmetric,
  by finding $n$ and $X=(X_1,X_2)$ such
  that $c(X)$ is not a symmetric matrix.\index{commutator}
\eexe

\bexe
Consider the following polynomials in two  and three  variables,
respectively:\index{polynomial identity} 
\[
\begin{split}
h_1 & = c^2 =
(x_1 x_2)^2 - x_1  x_2^2  x_1 - x_2  x_1^2  x_2 + 
 (x_2 x_1)^2, \\
h_2 & = h_1 x_3-x_3 h_1.
\end{split}
\]
\ben[\rm(a)]
\item
Compute $h_1(X_1,X_2)$ and $h_2(X_1,X_2,X_3)$ for 
several choices of $2\times 2$ matrices $X_j$.
What do you find? Can you formulate and prove a statement?
\item
What happens if you plug in $3\times 3$ matrices into $h_1$ and $h_2$?
\een
\eexe

\bexe
\label{exe:geomHess}
Prove that a symmetric nc polynomial $p$ is matrix convex
if and only if the Hessian $p^{\prime\prime}(x)[h]$ is matrix positive,
by completing the following exercise.\index{polynomial!convex}\index{polynomial!positive}

Fix $n$, suppose $\ell$ is a positive linear functional
on $\Snn$, and consider 
$$ f= \ell \circ p :\gtupn\to\RR.$$

\ben[\rm(a)]
\item
Show $f$ is convex if and only if 
 $\frac{d^2 f(X + t H)}{dt^2} \geq 0$
at $t=0$ for all $X, H \in \gtupn$.
\een
\noindent
  Given $v\in\RR^n$, 
  consider the linear functional $\ell(M):= v^{\ss} Mv$
  and let $f_v = \ell\circ p$. 
\ben[\rm(b)]
\item {\em Geometric}:  
  Fix $n$. Show, each $f_v$ satisfies the  convexity inequality
  if and only if $p$  satisfies the   convexity inequality
   on $\gtupn$; and
\item {\em Analytic}: show,  for each $v\in\RR^n$,
  $f_v^{\prime\prime}(X)[H] \geq  0$ for every $X,H\in\gtupn$ 
if
  and only if $p^{\prime\prime}(X)[H]\succeq0$ for 
  every $X,H\in\gtupn$. 
\een
\eexe

\bexe
For $n\in\NN$ let
\bes
s_n = \sum_{\tau\in {\rm Sym}_n} \sign(\tau) x_{\tau(1)} \cdots x_{\tau(n)}
\ees
be a polynomial of degree $n$ in $n$ variables. Here {\rm Sym}$_n$ denotes
the symmetric group on $n$ elements.\index{polynomial identity}
\ben[\rm (a)]
\item
Prove that $s_4$ is a polynomial identity for $2\times 2$ matrices.
That is, for any choice of $2\times 2$ matrices $X_1,\ldots,X_4$,
we have
$$
s_4(X_1,\ldots,X_4)=0.
$$
\item
Fix $d\in\NN$. Prove that there exists a nonzero polynomial $p$
vanishing on all tuples of $d\times d$ matrices.
\een
\eexe

 Several of the next
 exercises use a version of the shift operators on Fock space. 
 With $g$ fixed, the corresponding \df{Fock space},
  $\cF=\cF_g$,  is the Hilbert space
 obtained from $\RR\ax$ by declaring the words to be an orthonormal 
 basis; i.e., if $v,w$ are words, then
\[
  \langle v,w\rangle = \delta_{v,w},
\]
  where $\delta_{v,w}=1$ if $v=w$ and is $0$ otherwise.
 Thus $\cF_g$ is the closure of $\RR\ax$ in this inner product. 
 For each $j$, the operator $S_j$ on $\cF_g$ densely defined by
 $S_j p= x_j p$, for $p\in\RR\ax$ is an isometry (preserves the
 inner product) and hence extends to an isometry on all of
 $\cF_g$.  Of course, $S_j$ acts on an infinite dimensional
 Hilbert space and thus is not a matrix.

\bexe
\label{exe:fock} 
  Given a natural number $k$, 
  note that $\RR\ax_k$ is a finite dimensional
  (and hence closed) subspace of $\cF=\cF_g$.
  The dimension of $\RR\ax_k$ is 
\beq\label{eq:sigma}
  \sigma(k) = \sum_{j=0}^k g^j. 
\eeq
  Let $V:\RR\ax_k\to \cF$ denote the inclusion and
\[
   T_j = V^\ss S_j V.  
\]
  Thus $T_j$ does act on a finite dimensional space,
  and $T=(T_1,\dots,T_g)\in (\RR^{n\times n})^g$, for $n=\sigma(k)$. 
\ben[\rm (a)]
 \item  Show, if $v$ is a word of length at most $k-1,$ then
\[
   T_j v = x_j v;
\] 
 and $T_jv=0$ if the length of $v$ is $k$. 
\item
   Determine $T_j^\ss;$ 
\item
    Show, if $p$ is a nonzero polynomial of degree at most $k$ and
  $Y_j = T_j+T_j^\ss$, then $p(Y)\emptyset \ne 0;$
\item 
  Conclude, 
  if, for every $n$ and $X\in\gtupn$,  $p(X)=0,$ then $p$ is $0$.
\een
\eexe

Exercise \ref{exe:fock} shows there are no nc polynomials
vanishing on all tuples of (symmetric) matrices of all sizes.
The next exercise 
will lead the reader through an alternative proof
inspired by standard methods of polynomial identities.\index{polynomial identity}

\bexe\label{exe:PI}
 Let $p\in\RR\ax_n$ be an analytic 
 polynomial that vanishes on 
 $(\RR^{n\times n})^g$ $($same fixed $n)$. 
Write $p=p_0+p_1+\cdots+p_n$, where $p_j$ is
the homogeneous part of $p$ of degree $j$.
\ben[\rm(a)]
\item\label{it:1}
Show that $p_j$ also vanishes on $(\RR^{n\times n})^g$.
\item
A polynomial $q$ is called multilinear if it is homogeneous of degree
one with respect to all of its variables. Equivalently, 
each of its monomials
contains all variables exactly once, i.e.,
\bes
q=\sum_{\pi\in S_n} \alpha_\pi X_{\pi(1)} \cdots X_{\pi(n)}.
\ees
Using the staircase matrices $E_{11},E_{12},E_{22},E_{23},\ldots,
E_{n-1\, n},E_{nn}$ show that a nonzero multilinear polynomial $q$
of degree $n$ cannot vanish on all $n\times n$ matrices.
\item
By \eqref{it:1} we may assume $p$ is homogeneous. By induction on the
biggest degree a variable in $p$ can have, prove that $p=0$.
{\em Hint:} What are the degrees of the variables appearing in
$$
p(x_1+\hat x_1,x_2,\ldots,x_g)- p(x_1,x_2,\ldots,x_g)-
p(\hat x_1,x_2,\ldots,x_g)?
$$
\een
\eexe

\bexe
Redo Exercise {\rm\ref{exe:PI}} for a polynomial
\ben[\rm(a)]
\item
  $p\in\RR\axxs,$ not necessarily analytic,   vanishing on all
tuples of matrices;
\item
  $p\in\RR\ax$  vanishing on all
tuples of symmetric matrices.
\een
\eexe

\bexe
 \label{exe:openfock}
 Show, if $p\in\RR\ax$ vanishes on a nonempty basic open semialgebraic
 set, then  $p=0$.
\eexe

\bexe
 \label{exe:vanish}
  Suppose $p\in\RR\ax$, $n$ is a positive integer and $O\subseteq \gtupn$
  is an open set.  Show, if $p(X)=0$ for each $X\in O$, then
  $P(X)=0$ for each $X\in\gtupn$. 
{\em Hint:} given $X_0\in O$
  and $X\in\gtupn$, consider the matrix valued polynomial,
\[
  q(t)=p(X_0+tX).
\]
\eexe

\bexe\label{exe:ratVanish}
Suppose $r\in \freef$ is a rational function and there is a nonempty
nc basic open semialgebraic set
$\cO\subseteq \dom(r)$ with $r|_{\cO}=0$. Show that $r=0$.
\eexe

\bexe
 \label{exe:openinadim}
  Prove item {\rm (3)} of Proposition {\rm\ref{prop:faith}}.
  You may wish to use Exercises {\rm \ref{exe:vanish}}
  and {\rm\ref{exe:fock}}. 
\eexe

\bexe
Prove the following proposition:

\begin{proposition}
 \label{prop:findimrep}
  If $\pi:\RR\ax \to \RR^{n\times n}$
  is an involution preserving homomorphism, then
  there is an $X\in (\Snn)^g$ such that  $\pi(p)=p(X)$;
  i.e., all finite dimensional representations of
  $\RR\ax$ are evaluations.
\end{proposition}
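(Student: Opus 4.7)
The plan is to exploit the universal property of the free algebra $\RR\ax$: as the free associative unital $\RR$-algebra on the generators $x_1,\dots,x_g$, any $\RR$-algebra homomorphism out of it is determined by its values on the generators. This reduces the proposition to producing the right symmetric matrix tuple.

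First I would set $X_j := \pi(x_j) \in \RR^{n\times n}$ for $j=1,\dots,g$. Because $\pi$ is involution preserving and $x_j^{\ss}=x_j$, we get $X_j^{\ss} = \pi(x_j)^{\ss} = \pi(x_j^{\ss}) = \pi(x_j) = X_j$, so $X=(X_1,\dots,X_g)\in(\Snn)^g$. This is the candidate tuple.

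Next I would consider the evaluation map $\mathrm{ev}_X \colon \RR\ax \to \RR^{n\times n}$, $p\mapsto p(X)$. Unwinding the definition of evaluation on monomials and extending $\RR$-linearly, $\mathrm{ev}_X$ is itself an involution preserving $\RR$-algebra homomorphism sending $x_j$ to $X_j$. In particular $\mathrm{ev}_X$ agrees with $\pi$ on the generators $x_1,\dots,x_g$ and on the unit (both send $1$ to $I_n$, since an $\RR$-algebra homomorphism is by definition unital and $\RR$-linear).

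Finally, because every element of $\RR\ax$ is an $\RR$-linear combination of products of the generators, two $\RR$-algebra homomorphisms that coincide on $\{x_1,\dots,x_g\}$ must be equal. Hence $\pi(p)=\mathrm{ev}_X(p)=p(X)$ for all $p\in\RR\ax$. There is no genuine obstacle here — the only thing worth flagging is that the argument uses nothing beyond the universal property and the compatibility of $\pi$ with the involution; the dimension $n$ plays no role, and the same proof would show that any involution preserving $\RR$-algebra homomorphism from $\RR\ax$ into any unital $*$-algebra $\cA$ is evaluation at the tuple $(\pi(x_1),\dots,\pi(x_g))$ of self-adjoint elements of $\cA$.
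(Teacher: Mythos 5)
Your proof is correct, and since the paper states this proposition only as an exercise (no proof is supplied), there is nothing to compare against; your argument — set $X_j:=\pi(x_j)$, observe that these are symmetric because $\pi$ intertwines the involutions and $x_j^\ss=x_j$, and then invoke the universal property of the free algebra to conclude $\pi=\mathrm{ev}_X$ — is exactly the intended one. The only hypothesis worth keeping visible, which you do flag, is unitality of $\pi$: without $\pi(1)=I_n$ the conclusion would have to be adjusted (one would get an evaluation into a corner of $\RR^{n\times n}$), so it is good that you made the role of that convention explicit.
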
\index{polynomial!evaluation}

\eexe

\bexe
Do the algebra to
show $$x^\ss(1-xx^\ss)^{-1}=(1-x^\ss x)^{-1} x^\ss.$$
  $($This is a key fact used in the model theory for 
  contractions {\rm\cite{NFBK10}}.$)$
\eexe\index{rational function}

\bexe
  Give an example of symmetric $2\times 2$ matrices $X,Y$ such that
  $X\succeq Y\succeq 0$, but $X^2\not\succeq Y^2$.  

  This failure
  of a basic order property of $\RR$ for 
  $\SRnn$ is closely related to the rigid nature of
  positivity and convexity in the nc setting. 
\eexe

\bexe
Antiderivatives.\index{directional derivative}
\ben[\rm(a)]
\item
Is $q(x)[h]= x h + h x$
the derivative of any nc polynomial $p$?
If so what is $p$?
\item
Is $q(x)[h]= h h x +  hxh  + x h h$
the second derivative of any nc polynomial $p$?
If so what is $p$?
\item
Describe in general
which polynomials $q(x)[h]$ are
the derivative of some nc polynomial $p(x)$.
\item
Check you answer against the theory in 
{\rm\cite{GHV+}.}
\een
\eexe

\bexe $($Requires background in algebra$)$ 
Show that $\freef$ is a division ring;
i.e., the nc rational functions form 
a ring in which every nonzero element is invertible.
\eexe\index{rational function}

\bexe
In this exercise we will establish that
it is possible to embed the free algebra
$\RR\mathord{<}x_1,\ldots,x_g\mathord{>}$ into $\RR\axy$ for any $g\in\NN$.
\ben[\rm(a)]
\item
Show that the subalgebra of $\RR\axy$ generated by $xy^n$, $n\in\NN_0$, is free.
\item
Ditto for the subalgebra generated by
$$x_1=x,\quad
x_2=c(x_1,y),\quad x_3=c(x_2,y),\quad \ldots,\quad x_n=c(x_{n-1},y),\ldots.
$$
 Here, as before, $c$ is the commutator,  $c(a,b)=ab-ba$. 
\een
\eexe\index{commutator}

A comprehensive study of free algebras and nc rational functions
from an algebraic viewpoint is developed in \cite{Coh95,Coh06}.

\bexe
\label{it:TV}
  As a hard exercise, numerically verify that the set
\[
  {\rm ncTV}(2)=\{X\in \twotuptwo: 1-X_1^4 -X_2^4 \succ 0\}
\]
 is not convex. That is, find $X=(X_1,X_2)$ and
 $Y=(Y_1,Y_2)$ where $X_1,X_2,Y_1,Y_2$ are
 $2\times 2$ symmetric matrices such that both
\[
  1-X_1^4-X_2^4 \succ 0 \quad\text{and}\quad
  1-Y_1^4-Y_2^4 \succ 0,
\]
 but
\[
 1- \Big(\frac{X_1+Y_1}{2}\Big)^4 - \Big(\frac{X_2+Y_2}{2}\Big)^4 \not\succ 0.
\]
  You may wish to write a numerical search routine. 
\eexe
\index{TV screen}

\section{Computer algebra support}\label{sec:compAlg}
There are several computer algebra packages available
to ease the first contact with free convexity
and positivity. In this section we briefly describe two
of them:
\ben[\rm(1)]
\item
\NCAlgebra running under Mathematica;
\item 
\NCSOStools running under MATLAB.
\een
The former is more universal in that it implements
manipulation with noncommutative variables, including
nc rationals, and several algorithms pertaining to 
convexity. The latter is focused on nc positivity
and numerics.\index{NCAlgebra}\index{NCSOStools}

\subsection{NCAlgebra} 
 
  NCAlgebra \cite{HOMS++} runs under Mathematica and gives it the capability 
of manipulating noncommuting algebraic expressions. 
An important part of the package (which we shall not go into here)
is NCGB, which computes 
noncommutative Groebner Bases and has extensive sorting and display features as well as algorithms for automatically discarding ``redundant'' polynomials.

We recommend the user to have a look at the Mathematica 
notebook\\
\href{http://math.ucsd.edu/~ncalg/NCBasicCommandsDemo.nb}{\tt NCBasicCommandsDemo} available from the NCAlgebra website\\
\centerline{\url{http://math.ucsd.edu/~ncalg/}}
for the basic commands and their usage in NCAlgebra.
Here is a sample.

The basic ingredients are (symbolic) variables,
which can be either noncommutative or commutative.
At present, single-letter lower case variables are noncommutative by default and all others are commutative by default. 
To change this one can employ

\ncalg 
\ncad{SetNonCommutative[listOfVariables]} to make all the
variables appearing in listOfVariables noncommutative. 
The converse is given by

\ncalg \ncad{SetCommutative}.

\bexa
Here is a sample session in Mathematica running NCAlgebra.
\begin{verbatim}
In[1]:= a ** b - b ** a 
Out[1]= a ** b - b ** a

In[2]:= A ** B - B ** A 
Out[2]= 0

In[3]:= A ** b - b ** a
Out[3]= A b - b ** a

In[4]:= CommuteEverything[a ** b - b ** a] 
Out[4]= 0

In[5]:= SetNonCommutative[A, B]
Out[5]= {False, False}

In[6]:= A ** B - B ** A
Out[6]= A ** B - B ** A     

In[7]:= SetNonCommutative[A];SetCommutative[B]
Out[7]= {True} 

In[8]:= A ** B - B ** A
Out[8]= 0     
\end{verbatim}
%\qedempty
\eexa

Slightly more advanced  is the NCAlgebra 
command to generate the directional
derivative\index{directional derivative} of a polynomial $p(x,y)$ with respect to $x$, which is denoted by
$D_{x}p(x, y)[h]$:

\ncalg \ncad{DirectionalD[Function $p$, $x$, $h$]},
and is abbreviated 

\ncalg \ncad{DirD}.

\bexa
Consider 
\begin{verbatim} a = x ** x ** y - y ** x ** y\end{verbatim} 
Then
\bev
DirD[a, x, h] = (h ** x + x ** h) ** y - y ** h ** y
\end{verbatim}
or in expanded form,
\bev
NCExpand[DirD[a, x, h]] = h ** x ** y + x ** h ** y - y ** h ** y
\end{verbatim}
Note that we have used 

\ncalg \ncad{NCExpand[Function $p$]} to expand
a noncommutative expression. The command comes with a convenient
abbreviation 

\ncalg \ncad{NCE}.
%\qedempty
\eexa

NCAlgebra is capable of much more. For instance,
is a given noncommutative function ``convex''? 
You type in a function of noncommutative variables; the command

\ncalg \ncad{NCConvexityRegion[Function, ListOfVariables]}
 tells you where the (symbolic) Function is convex in the Variables. 
 The algorithm\index{polynomial!convex}\index{rational function!convex}
comes from the paper of Camino, Helton, Skelton, Ye \cite{CHSY03}.

\ncalg $\{L,D,U,P \}$:=\ncad{NCLDUDecomposition[Matrix]}.
Computes the LDU Decomposition of Matrix and returns the
result as a 4 tuple. The last entry is a Permutation matrix
which reveals which pivots were used. If Matrix is symmetric
then $U=L^\ss$.

The NCAlgebra website comes with extensive documentation.
A more advanced notebook with a hands on demonstration of applied
capabilities of the package is \href{http://math.ucsd.edu/~ncalg/DEMOS/DemoBRL.nb}{\tt DemoBRL.nb}; 
it derives the Bounded Real Lemma for a linear system.

\bexe For the polynomials and rational functions 
defined  at the beginning of Section {\rm\ref{sec:2exes}},
use  NCAlgebra to calculate
 \ben[\rm(a)]
\item
\verb=p**q=
   and  
\verb=NCExpand[p**q]=
\item
\bev
NCCollect[p**q, x1]   
\end{verbatim}
 \item \verb=D[p,x1,h1]= and \verb=D[u,x1,h1]=
  \een
 \eexe

\subsubsection{Warning} 
The Mathematica substitute commands 
\verb=/.=, \verb=/>= and \verb=/:>=
are not reliable in NCAlgebra, so a user should use NCAlgebra's 
Substitute command.

\bexa
Here is an example of unsatisfactory behavior of the built-in
Mathematica function.
\begin{verbatim}
In[1]:= (x ** a ** b) /. {a ** b -> c}
Out[1]= x ** a ** b
\end{verbatim}
On the other hand, NCAlgebra performs as desired:
\begin{verbatim}
In[2]:= Substitute[x ** a ** b, a ** b -> c]
Out[2]= x ** c
\end{verbatim}
\eexa

\subsection{NCSOStools}

A reader mainly interested in positivity of 
noncommutative polynomials\index{NCSOStools}\index{polynomial!positive} might be
better served by NCSOStools \cite{CKP++}.
NCSOStools is an open source MATLAB toolbox for
\ben[\rm (a)]
\item basic symbolic computation with polynomials in noncommuting variables;
\item constructing and solving sum of hermitian squares (with commutators) programs for polynomials in noncommuting variables.
\een
It is normally used in combination with standard semidefinite programming software to solve these constructed LMIs.

The NCSOStools website\\
\centerline{\url{http://ncsostools.fis.unm.si}}
contains  documentation and a demo notebook
\href{http://ncsostools.fis.unm.si/documentation/NCSOStoolsdemo}{\tt NCSOStoolsdemo} to give the user a gentle introduction into its features.

\bexa
Despite some ability to manipulate symbolic expressions, MATLAB cannot
handle noncommuting variables. They are implemented in NCSOStools.

\ncsos \ncsosd{NCvars} $x$ introduces a noncommuting variable $x$ into the
workspace.
\eexa

NCSOStools is well equipped to work with commutators and
sums of (hermitian) squares.
Recall: a \df{commutator} is an expression of the form
$fg-gf$.\index{commutator}

\bexe
Use NCSOStools to check whether the polynomial
$x^2yx+yx^3-2xyx^2$
is a sum of commutators. $(${\em Hint}: Try the \ncsosd{NCisCycEq} command.$)$ If so, can you find such an expression?
\eexe

Let us demonstrate an example with sums of squares.\index{sum of squares} 

\bexa
Consider
\begin{verbatim}
f = 5 + x^2 - 2*x^3 + x^4 + 2*x*y + x*y*x*y - x*y^2 + x*y^2*x
    -2*y + 2*y*x + y*x^2*y - 2*y*x*y + y*x*y*x - 3*y^2 - y^2*x + y^4
\end{verbatim}
Is $f$ matrix positive? By Theorem \ref{thm:ncsos} it suffices to check whether
$f$ is a sum of squares. This is easily done using

\ncsos \ncsosd{NCsos$(f)$}, which checks the polynomial $f$ is a sum of squares.
Running NCsos$(f)$ tells us that $f$ is indeed a sum of squares.
What NCSOStools does, is transform this question into a semidefinite
program (SDP) and then calls a solver.
NCsos comes with several options. Its full command line is
\begin{center}
\verb+[IsSohs,X,base,sohs,g,SDP_data,L] = NCsos(f,params)+
\end{center}
The meaning of the output is as follows:
\ben [$\bullet$]
\item
{\tt IsSohs} equals 1 if the polynomial $f$ is a sum of hermitian squares and $0$ otherwise;
\item
{\tt X} is the Gram matrix solution of the corresponding SDP returned by the solver;
\item
{\tt base} is a list of words which appear in the SOHS decomposition;
\item
{\tt sohs} is the SOHS decomposition of $f$;
\item
{\tt g} is the NCpoly representing $\sum_i m_i^\ss m_i$;
\item
\verb=SDP_data= is a structure holding all the data used in SDP solver;
\item
{\tt L} is the operator representing the dual optimization problem (i.e., the dual feasible SDP matrix).
\een
\eexa

\bexe
Use NCSOStools to 
compute the smallest eigenvalue $f(X,Y)$ can attain for a pair
of symmetric matrices $(X,Y)$. Can you also find a minimizer pair
$(X,Y)$?
\eexe

\bexe\label{exe:sturm}
Let $f=y^2+(xy-1)^\ss(xy-1)$. Show that
\ben[\rm(a)]
\item
$f(X,Y)$ is always positive semidefinite.
\item
For each $\eps>0$ there is a pair of symmetric matrices $(X,Y)$
so that the smallest eigenvalue of $f(X,Y)$ is $\eps$.
\item
Can $f(X,Y)$ be singular?
\een
\eexe

The moral of Example \ref{exe:sturm} is that even if an nc polynomial
is bounded from below, it need not attain its minimum.

\bexe
Redo the Exercise {\rm\ref{exe:sturm}} for
$
  f(x) = x^\ss x +(xx^\ss-1)^\ss (xx^\ss-1).
$
\eexe

\section{A Gram-like representation}
 \label{sec:middle-border}

The next two sections are devoted to a
powerful representation of quadratic functions $q$ in nc variables
which takes a strong form when $q$ is matrix positive;
we call it a {\em\qpos}.
Ultimately we shall apply this to
$q(x)[h]=p''(x)[h]$ and show that if $p$ is matrix convex
(i.e., $q$ is matrix positive), then $p$ has degree two.
We begin by illustrating our grand scheme with examples.

\subsection{Illustrating the  ideas}

\mbox{}
\bexa
The (symmetric) polynomial $p(x)= x_1 x_2 x_1 +x_2 x_1 x_2$ (in symmetric
  variables) 
has Hessian $q(x)[h]=p''(x)[h]$ which is homogeneous quadratic in $h$
and is
$$q(x)[h] = 2   h_1  h_2  x_1   +  2  h_1  x_2  h_1
  +  2 h_2  h_1  x_2 + 2 h_2  x_1  h_2  +  2 x_1  h_2  h_1 + 2 x_2 h_1 h_2.
  $$
  We can write  $q$ in the form
  $$q(x)[h] =
  \begin{bmatrix} h_1 & h_2 &x_2 h_1  & x_1h_2
  \end{bmatrix}
  \begin{bmatrix}
  2x_2 & 0 & 0 & 2 \\
  0 & 2x_1 & 2 & 0 \\
   0 & 2 & 0 & 0 \\
  2 & 0 & 0 & 0
  \end{bmatrix}
  \begin{bmatrix} h_1 \\ h_2 \\ h_1 x_2 \\ h_2x_1
  \end{bmatrix}.
  $$
\eexa

The representation of $q$ displayed above is of the form
$$ q(x)[h] =
V(x)[h]^\ss Z(x) V(x)[h]
$$
where $Z$ is called the \df{middle matrix} (MM) and
$V$ the \df{border vector} (BV).
\index{MM}
\index{BV}
The MM does not contain $h$.
The BV is linear in $h$ with $h$ always on the left.
In Section \ref{sec:2} we define this border vector-middle matrix (BV-MM)
 \index{BV-MM}\index{border vector-middle matrix}
  representation
generally for nc polynomials $q(x)[h]$ which are homogeneous of degree two 
 in the $h$ variables.
 Note the entries of the BV are distinct monomials.

\bexa
 \label{ex:xyxy}
Let $
p=
  x_2  x_1    x_2    x_1  + x_1  x_2  x_1 x_2.
$
Then 
\begin{multline*}
  q= p^{\prime\prime}=2 h_1h_2x_1x_2 + 2 h_1x_2  h_1  x_2 + 2 h_1 x_2  x_1 h_2  +
  2 h_2 h_1 x_2 x_1 + 2 h_2x_1h_2 x_1 + 2 h_2  x_1  x_2  h_1 \\ 
  +  2 x_1h_2 h_1x_2 + 2 x_1 h_2  x_1  h_2 + 2 x_1  x_2  h_1  h_2 +
  2 x_2  h_1  h_2  x_1 + 2 x_2  h_1  x_2  h_1 + 2 x_2  x_1  h_2  h_1.
 \end{multline*}
The BV-MM representation for $q$ is
$$
 q= % \begin{bmatrix} h_1 & h_2 & x_2h_1 & x_1h_2 & x_1x_2h_1 & x_2x_1 \end{bmatrix}
 \left[ h_1 \;\; h_2 \;\; x_2h_1 \;\; x_1h_2 \;\; x_1x_2h_1 \; \; x_2x_1h_2 \right]
\begin{bmatrix}
0 & 2x_2x_1 & 2x_2 & 0 & 0 & 2 \\
2 x_1x_2 & 0 & 0 & 2x_1 & 2 & 0 \\
2x_1 & 0 & 0 & 2 & 0 & 0 \\
0 & 2x_2 & 2 & 0 & 0 & 0 \\
0 & 2 & 0 & 0 & 0 & 0 \\
  2 & 0 & 0 & 0 & 0 & 0
  \end{bmatrix}
  \begin{bmatrix} h_1 \\ h_2 \\ h_1x_2 \\ h_2x_1 \\ h_1x_2x_1
\\ h_2x_1x_2 \end{bmatrix}
$$
\eexa

\bexa
\label{exa:MMintro}
In the one variable case with $h_1= h_1^\ss$ we abbreviate $h_1$ to $h$.
Fix some nc variables not necessarily symmetric
$w:= (a,b,d,e) $ and consider
\begin{equation}\label{ex:pre4.2}
q(w)[h]:=
h a h + e^\ss h b h + h b^\ss h e + e^\ss h dh e.
\end{equation}
which is a quadratic function of $h$.
It can be written in the BV-MM form
\begin{equation}\label{ex:4.2}
q(w)[h]=
\begin{bmatrix} 
h & e^\ss h \end{bmatrix}
\begin{bmatrix} 
a & b^\ss \\ b & d   \end{bmatrix}
\begin{bmatrix}
h \\ h e \end{bmatrix}.
\end{equation}
The representation is unique.

Observe  \eqref{ex:4.2} contrasts strongly
with the commutative case wherein \eqref{ex:pre4.2}
takes the form
\begin{equation*}
q(w)[h] = h ( a + e^\ss b + b^\ss e + e^\ss d e) h.
\end{equation*}
\eexa

\begin{example}
\label{ex:x5}
  \rm
  The Hessian of $p(x)=x^4$ is
\beq\label{eq:hess4}
\begin{split}
q(x)[h]:=p^{\prime\prime}(x)[h]=\  & 
2(x^2 h^2 + xh^2 x + h^2x^2)\\
 &+ 
 2(xhxh + hxh x )\\
 &+
 hx^2h,
\end{split}
\eeq
  a polynomial
that is homogeneous of degree two in $x$ and homogeneous of degree two in 
$h$
that can be expressed as
$$
q(x)[h]=2 \begin{bmatrix} h & xh & x^2 h  \end{bmatrix}
\begin{bmatrix}
x^2& x &1\\
x&1&0\\
1&0&0\\
\end{bmatrix}
\begin{bmatrix}
h\\hx\\hx^2\end{bmatrix}.
$$
\end{example}

Notice that the contribution of the main antidiagonal
 of the MM for $q$ in Example \ref{ex:x5} 
(all $1's$) corresponds to  the right hand side 
 of  first line of \eqref{eq:hess4}.
  Indeed,  each antidiagonal corresponds to a line of 
\eqref{eq:hess4}.

\bexe In Example {\rm\ref{ex:x5}},
for which symmetric matrices
  $X$ is $Z(X)$ positive semidefinite?
\eexe

\bexe
What is the MM $Z(x)$ for $p(x)= x^3$?
For which symmetric matrices
  $X$ is $Z(X)$  positive semidefinite?
\eexe

\bexe
Compute  middle matrix representations using \NCAlgebrah.
The command is
\begin{quote}
{$ \{lt, mq, rt\}  =  $\ncad{NCMatrixOfQuadratic[$q$, $\{h, k\}$]} }
\end{quote}
In the output $mq$ is the MM and $rt$ is the BV and
$lt$ is $(rt)^{\ss}$.
For examples, see\\
{\tt NCConvexityRegionDemo.nb}
In the {\tt NC/DEMOS directory}.
\eexe

\subsubsection{The positivity of $q$ vs.~positivity of the MM}
 In this section we let $q(x)[h]$ denote a polynomial
 which is homogeneous of degree two in $h$, but which is not
 necessarily the Hessian of a nc polynomial.
 While we have focused on Hessians, such a $q$ 
 will still have a BV-MM representation. 
{\it So what good is this representation?}
After all one expects  that $q$ could have wonderful
properties, such as positivity, which are
not shared by its middle matrix.
No, the striking thing is that positivity of $q$ implies positivity of
the MM. Roughly we shall prove
what we call the \df{QuadratischePositivstellensatz}, which 
 is essentially Theorem 3.1 of \cite{CHSY03}. 

\begin{theorem}
\label{thm:qposSS}
If the polynomial\footnote{This theorem is true (but not proved here)
for $q$ which are nc rational in $x$.}
  $q(x)[h]$ is homogeneous quadratic in $h,$
then
$q$ is matrix positive if and only if
its middle matrix $Z$ is matrix positive.

  More generally, suppose $\cO$ is a nonempty  
   nc basic open semialgebraic
  set.  If $q(X)[H]$ is positive semidefinite for 
  all $n\in\NN$, $X\in\cO(n)$ and $H\in\gtupn$, then 
  $Z(X)\succeq 0$ for all $X\in\cO$. 
\end{theorem}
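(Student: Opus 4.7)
The forward direction follows immediately from the BV-MM representation: for any $H$ and any vector $v$, if $Z(X)\succeq 0$ then
\begin{equation*}
v^\ss q(X)[H] v = \bigl(V(X)[H] v\bigr)^\ss Z(X) \bigl(V(X)[H] v\bigr) \geq 0,
\end{equation*}
so $q(X)[H]\succeq 0$. This direction uses neither convexity nor the semialgebraic set and applies on all of $\gtupn$.

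For the converse, fix $X\in\gtupn$ (in $\cO(n)$ for the second statement) and a vector $u\in\RR^{kn}$, partitioned as $u=(u_1,\ldots,u_k)$ with each $u_j\in\RR^n$, where $k$ is the number of entries of $V$. The goal is to show $u^\ss Z(X) u\geq 0$. The plan is to construct an enlarged tuple $\tilde X\in\gtupm$ (remaining in $\cO(m)$ when required), symmetric matrices $\tilde H=(\tilde H_1,\ldots,\tilde H_g)$ on $\RR^m$, and a vector $\tilde v\in\RR^m$ so that $V(\tilde X)[\tilde H]\tilde v$ is a specific lift $\tilde u\in\RR^{km}$ of $u$ satisfying $\tilde u^\ss Z(\tilde X)\tilde u = u^\ss Z(X) u$. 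Then
\begin{equation*}
u^\ss Z(X) u = \tilde u^\ss Z(\tilde X)\tilde u = \tilde v^\ss V(\tilde X)[\tilde H]^\ss Z(\tilde X) V(\tilde X)[\tilde H]\tilde v = \tilde v^\ss q(\tilde X)[\tilde H]\tilde v \geq 0,
\end{equation*}
the last inequality being matrix positivity of $q$ at $\tilde X$.

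The construction of $\tilde X,\tilde H,\tilde v$ is the crux. A natural attempt is $\tilde X = X\oplus X\oplus\cdots\oplus X$ ($M$ copies) together with $\tilde H_i$ built from off-diagonal rank-one blocks engineered so that, for each entry $w_j(x)h_{i(j)}$ of the border vector, the product $w_j(\tilde X)\tilde H_{i(j)}\tilde v$ deposits $u_j$ into a designated coordinate slot of $\tilde u$ and zero elsewhere. The main subtlety is that when several border vector entries share the same variable $h_{i_0}$ but carry different word prefixes, the single matrix $\tilde H_{i_0}$ must implement all of the required actions at once; this forces the corresponding source and target slots to live in disjoint corners of the block structure, and $M$ must grow with the number of distinct monomials appearing in $V$. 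The orthogonality of these corners, combined with the block-diagonal form of $Z(\tilde X)$ on direct sums, is what guarantees $\tilde u^\ss Z(\tilde X)\tilde u = u^\ss Z(X) u$.

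For the restricted version on $\cO$, membership is preserved under direct sums because the defining inequalities satisfy $p_i(X^{\oplus M})=p_i(X)^{\oplus M}$, so $p_i(X)\succ 0$ implies $p_i(X^{\oplus M})\succ 0$; hence any construction using only direct sums of copies of $X$ automatically keeps $\tilde X\in\cO$. The principal obstacle throughout is not conceptual but combinatorial: exhibiting an explicit block-matrix recipe for $\tilde H$ that realizes $V(\tilde X)[\tilde H]\tilde v = \tilde u$ for arbitrary target $u$, while keeping $\tilde X$ a direct sum of copies of $X$ alone. Once this combinatorial step is carried out for a single entry, the general case follows by linearity and the uniqueness of the BV-MM representation in the chosen monomial basis.
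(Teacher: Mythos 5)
Your forward direction is fine and matches the paper. The converse, however, has a genuine gap, and the strategy as proposed cannot be made to work.

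Your plan is to hit a \emph{specific} target $\tilde u$ by an explicit choice of $\tilde H$, using only direct sums of copies of $X$. There are two fatal obstacles. First, passing to $\tilde X = X^{\oplus M}$ does not break linear dependence among the border-vector entries: since $w(X^{\oplus M}) = w(X)^{\oplus M}$, any identity $\sum_j c_j w_j(X) = 0$ on $\RR^{n\times n}$ is inherited by $X^{\oplus M}$ for every $M$ and every choice of $\tilde v$. Such identities genuinely occur (e.g.\ for small $n$), and when they do, the vectors $\{w_j(\tilde X)\tilde v\}$ stay linearly dependent and the range of $H\mapsto V(\tilde X)[H]\tilde v$ has a codimension gap that cannot be closed by enlarging $M$. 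Second, even when the $\{w_j(\tilde X)\tilde v\}$ \emph{are} independent, the CHSY Lemma (Lemma \ref{lem:CHSY-start}) shows that the set $\{(\tilde H z_1,\ldots,\tilde H z_d)\colon \tilde H\in \SS^{m\times m}\}$ has codimension $\tfrac{d(d-1)}{2}$, \emph{independent of} $m$. So one can never ``hit an arbitrary target'' by choosing $\tilde H$ cleverly; the codimension does not shrink as the block size grows. What you call the combinatorial obstacle is in fact a structural constraint imposed by the symmetry of $\tilde H$.

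The paper's argument circumvents both problems by a fundamentally different route. It first invokes the linear-dependence theorem (Theorem \ref{thm:depevals}, proved via the direct-sum compactness argument of Theorem \ref{thm:gendep}) to produce a \emph{new} point $Y\in\cO(t)$ and vector $\eta$ for which $\{m(Y)\eta\colon m\in\ax_\ell\}$ is linearly independent; it then passes to $X \oplus Y$ with $\gamma = 0\oplus\eta$, which preserves $Z\not\succeq 0$ while ensuring the monomials are independent. Finally, instead of hitting a prescribed target, it tensors with $I_N$ to inflate the negative eigenspace of $Z$ to dimension $\ge N$, applies the CHSY Lemma (Lemma \ref{lem:CHSY-in-action}) to bound the codimension of the reachable set by $N-1$, and concludes by a dimension count that the two subspaces intersect. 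Both the appended generic point $Y$ (which your construction excludes by restricting to copies of $X$) and the codimension/dimension comparison (which your construction replaces with an unachievable surjectivity claim) are essential, and neither appears in your proposal.
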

We emphasize that, in the theorem, the convention that the terms of the border vector are distinct
is in force. 

To foreshadow
 Section  \ref{sec:qposSS} and to give an idea of
  the proof of Theorem \ref{thm:qposSS}, we illustrate it
on an example in one variable.
This time we use a {\it free} 
rather than symmetric variable since proofs are a bit
easier.

Consider the noncommutative quadratic function $q $
 given by
\begin{equation} \label{eq:NC1.1}
q(w)[h]:= h^\ss b h + e^\ss h^\ss c h + h^\ss c^\ss h e + e^\ss h^\ss a h e
\end{equation}
where $w=(a,b,c,e)$.
   The border vector $V(w)[h]$ and the
coefficient matrix $Z(w)$ with noncommutative entries are
$$ \begin{array}{ccc}
V(w)[h]=\bmat h \\
                      h e \emat
& \qquad {\rm and} \qquad &
Z(w) = \bmat b & c^\ss \\ c & a  \emat,
\end{array} $$
that is, $q$ has the form
\begin{equation*}
q(w)[h] = V(w)[h]^\ss Z(w) V(w)[h] =
\bmat h^\ss & e^\ss h^\ss \emat
\bmat
b & c^\ss \\ c & a  \emat
\bmat
h \\  he \emat.
\end{equation*}

Now, if in equation \eqref{eq:NC1.1} the elements $a$, $b$, $c$, $e$,
$h$ are replaced by matrices in $\RR^{n \times n}$, then the
noncommutative quadratic function $q(w)[h]$
becomes a matrix valued function $q(W)[H]$.
The matrix valued function $q[H]$ is matrix
positive  if and only if $v^\ss q(W)[H] v \geq 0$ for
all vectors $v \in \RR^n$ and all $H \in \RR^{n \times n}$.
Or
equivalently, the following inequality must hold
\begin{equation} \label{eq:4.2.5}
\bmat
v^\ss  H^\ss & v^\ss  E^\ss  H^\ss
\emat
Z
\bmat
H v \\ H  E v \emat \ge 0.
\end{equation}
Let
\bes
y^\ss :=\bmat v^\ss  H^\ss & v^\ss E^\ss  H^\ss
\emat.
\ees
Then \eqref{eq:4.2.5} is equivalent to $y^\ss Z \,y \ge 0$. Now it
suffices to prove that all vectors of the form $y$ sweep $\RR^{2n}$.
This will be completely analyzed in full generality in
Section \ref{appendix:CHSY} but next we give the proof for our simple 
situation.

Suppose for a given $v$, with $n \ge 2$, the vectors $v$ and $Ev$ are
linearly independent. Let \( y = \begin{bmatrix} v_1 \\ v_2
\end{bmatrix}\) be any vector in $\RR^{2n}$, then we can choose $H \in
\RR^{n\times n}$ with the property that $v_1 = H v$ and $v_2 = H
Ev$. It is clear that 
\begin{equation}
 \label{eq:Rhatv}
   {\cR}^v := \left\{\begin{bmatrix} H v\\ HE v
   \end{bmatrix}:  H\in\RR^{n\times n}\right\}
\end{equation}
is all $\RR^{2n}$ as required.

Thus we are
finished unless for all $v$ the vectors $v$ and $Ev$ are linearly
dependent. That is for all $v$, $\lambda_1 (v) v + \lambda_2(v) E v=
0$ for nonzero $\lambda_1 (v)$ and $\lambda_2(v)$.
  Note $\lambda_2(v)
\not= 0$, unless $v=0$.
Set
$\tau(v) := \frac{\lambda_1(v)}{\lambda_2(v)}$,
then the linear dependence becomes $\tau(v) v + E v= 0$
for all $v$. It turns out that
this does not happen unless $E= \tau I$ for some $\tau \in \RR$.
This is a baby case of  Theorem \ref{thm:gendep} which 
 comes later and is a subject unto itself.

To finish the proof pick a $v$ which makes $\cR^v$ equal all of $\RR^{2n}$.
Then $v^{\ss} q(W)[H] v \geq 0$ implies that $Z\succeq0$,
by \eqref{eq:4.2.5}.
\qed

\subsection{Details of the Middle Matrix representation}

   \label{sec:2}
The following
    representation for
   symmetric nc polynomials $q(x)[h]$ that are of degree $\ell$ in
$x$ and
homogeneous of degree two in $h$ is exploited extensively  in this subject:

\begin{equation}\label{eq:rep1}
q(x)[h]=
\bmat
V^\ss_0& V^\ss_1& \cdots & V^\ss_{\ell-1}& V^\ss_\ell
\emat\bmat
Z_{00}&Z_{01}& \cdots&Z_{0, \ell-1}& Z_{0\ell}\\
Z_{10}&Z_{11}& \cdots&Z_{1, \ell-1}&0\\
\vdots&\vdots& \Ddots &\vdots&\vdots\\
Z_{\ell-1, 0}&Z_{\ell-2,1}& \cdots&0&0\\
Z_{\ell 0}&0& \cdots&0&0
\emat
\bmat
V_0\\V_1\\ \vdots\\
V_{\ell-1}\\
 V_\ell\emat,
\end{equation}
where:
\begin{enumerate}
\item[(1)] The degree $d$ of $q(x)[h]$ is  $d=\ell +2$.
\item[(2)]
$V_j=V_j(x)[h]$, $j=0,\ldots,\ell$, is a vector  of
height $g^{j+1}$ whose entries are
monomials of degree $j$ in the $x$ variables
and degree one in the $h$ variables.
The $h$ always appears to the left.
In particular,
$V(x)[h]$ is a vector
of height $g\sigma(\ell)$, where as in \eqref{eq:sigma},
$$
\sigma(\ell)=1+g+\cdots+g^{\ell}\,.
$$

\item[(3)]
$Z_{ij}=Z_{ij}(x)$, is a matrix of size $g^{i+1} \times g^{j+1}$
whose  entries
are polynomials in  the noncommuting variables
$x_1, \dots, x_g$ of degree
$\le \ell-(i+j)$.
In particular, $Z_{i,\ell-i}=Z_{i,\ell-i}(x)$ is a
constant matrix for $i=0,\ldots,\ell$.

\item[(4)]
$Z^\ss_{ij} = Z_{ji}$.
\end{enumerate}

Usually the entries of the vectors $V_j$ are ordered lexicographically.

We note that the vector of monomials, $V(x)[h]$,
might contain monomials that
are not required in the representation of the nc quadratic $q$.
Therefore, we can omit
all monomials from the border vector that are not required.
This gives us a \textit{minimal length} border vector and
prevents extraneous zeros from occurring in the middle matrix.
The matrix $Z$ in the representation \eqref{eq:rep1} will be referred to
as the {\it middle matrix $($MM$)$ of the polynomial $q(x)[h]$}\index{middle matrix}  and the vectors
$V_j=V_j(x)[h]$ with monomials as entries
will be referred to as {\it border vectors $($BV$)$}.\index{border vector} 
It is easy to check that
a minimal length border vector contains distinct monomials
and once the ordering of entries of $V$ is set the MM for
a given $q$ is unique,
see Lemma \ref{lem:MMunique} below.

\begin{example}\rm
  Returning to Example \ref{ex:xyxy}, we have for the MM representation 
  of $q$ that
$$
 V_0 =  \begin{bmatrix} h_1 \\  h_2 \end{bmatrix}, \qquad
  V_1 =  \begin{bmatrix} h_2 x_1 \\ h_1x_2 \end{bmatrix},  \qquad
  V_2 =  \begin{bmatrix} h_1x_2x_1 \\  h_2x_1 x_2 \end{bmatrix} 
$$
 and, for instance,
 $$
  Z_{00} =  \begin{bmatrix} 0 & 2x_2 x_1 \\ 2x_1 x_2 & 0 \end{bmatrix},\qquad
  Z_{01} =  \begin{bmatrix} 2x^2 & 0 \\ 0 & 2x_1 \end{bmatrix}, \qquad
  Z_{02} =  \begin{bmatrix} 0 & 2 \\  2 & 0 \end{bmatrix}.
$$
 Note that generically for a polynomial $q$ in two variables
 the $V_j$ have additional terms. For instance, usually
 $V_1$ is the column 
\[
 \begin{bmatrix} h_1 x_1 \\ h_1x_2 \\ h_2x_1 \\ h_2 x_2 \end{bmatrix}.
\]
  Likewise generically $V_2$ has eight terms.  As for the
 $Z_{ij}$, for instance $Z_{01}$ is generically $2\times 4$.
\end{example}

\begin{lem}
\label{lem:MMunique}
The entries in the middle matrix $Z(x)$ are uniquely
determined by the polynomial $q(x)[h]$ and the border vector $V(x)[h]$.
\end{lem}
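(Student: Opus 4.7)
The plan is to prove uniqueness by taking the difference of any two representations and showing it must vanish. Suppose $q = V^\ss Z V = V^\ss Z' V$ and set $W = Z - Z'$, so that $V^\ss W V = 0$; the goal is to conclude $W = 0$.

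First I would record the structure of $V$. By construction each entry $V_i$ has the form $h_{k_i} w_i$ for some index $k_i \in \{1,\ldots,g\}$ and some word $w_i$ in the symmetric variables $x_1,\ldots,x_g$, and the $V_i$ are pairwise distinct (this is the ``distinct monomials in a minimal length BV'' hypothesis stressed just before the lemma). Writing each polynomial entry of $W$ as
\begin{equation*}
W_{ij} \;=\; \sum_{m} c^{ij}_m\, m, \qquad c^{ij}_m \in \RR,
\end{equation*}
where $m$ ranges over words in $x$, the expansion
\begin{equation*}
V^\ss W V \;=\; \sum_{i,j,m} c^{ij}_m\, w_i^\ss\, h_{k_i}\, m\, h_{k_j}\, w_j
\end{equation*}
is a formal sum in $\RR\ax[h]$ of monomials each containing exactly two $h$-occurrences.

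The crux is a simple combinatorial distinctness claim: the monomials $w_i^\ss h_{k_i} m h_{k_j} w_j$ are pairwise distinct as $(i,j,m)$ varies. Indeed, any such monomial has the shape $u\, h_a\, v\, h_b\, z$ with $u,v,z$ words in $x$ and $a,b\in\{1,\ldots,g\}$, and the two $h$-positions (intrinsic to the monomial) partition it uniquely into the triple of $x$-blocks $u,v,z$. Reading off $u=w_i^\ss$, $v=m$, $z=w_j$, $a=k_i$, $b=k_j$ recovers the data, and since the BV has distinct entries, the pair $(k_i,w_i)$ uniquely determines $i$ and likewise $(k_j,w_j)$ determines $j$.

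Consequently the monomials above are linearly independent in $\RR\ax[h]$, so $V^\ss W V = 0$ forces every $c^{ij}_m$ to vanish, giving $W_{ij}=0$ for all $i,j$ and hence $Z = Z'$. The only real obstacle is the combinatorial distinctness step; it is essentially bookkeeping, but it genuinely relies on the distinct-entries hypothesis on $V$, without which cross-terms from different $(i,j)$ could collide and uniqueness would fail.
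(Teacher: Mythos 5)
Your proof is correct, but it is organized differently from the paper's. The paper's proof records a combinatorial observation: every monomial of $q$ has the unique shape $m_L h_i m_M h_j m_R$, and the set of monomials $h_j m$ occurring as right-hand ends of terms of $q$ coincides with the set $E_V$ of entries of the (minimal) border vector. This pins down how terms of $q$ match up with positions in $V$, and the uniqueness of $Z$ is then left as an essentially implicit consequence. You instead run the textbook uniqueness argument directly: set $W = Z - Z'$, expand $V^\ss W V = \sum_{i,j,m} c^{ij}_m\, w_i^\ss h_{k_i}\, m\, h_{k_j} w_j$, and argue that the monomials $w_i^\ss h_{k_i} m h_{k_j} w_j$ are pairwise distinct (hence linearly independent in $\RR\ax[h]$), so all $c^{ij}_m$ vanish. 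Both arguments rest on the same two facts — a degree-two-in-$h$ monomial splits uniquely into three $x$-blocks around its two $h$-occurrences, and the distinctness of the BV entries lets you recover $i$ from $(k_i, w_i)$ and $j$ from $(k_j, w_j)$ — but your subtract-and-linear-independence route addresses the stated claim more directly and leaves less to the reader than the paper's terse version.
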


\proof Note every monomial in $q(x)[h]$ has the form
$$ m_L h_i m_M h_j m_R.$$
Define
$$
\cR_j:= \{ h_j m: \ m_L h_i m_M h_j m \text{ is a term in }
q(x)[h] \}.
$$
Given the representation $V^\ss ZV$ for $q$, let $E_V$ denote the
monomials in $V$. Then it is clear that each monomial in $E_V$
must occur in some term of $q$, so it appears in  $\cR_j$ for some
$j$. Conversely, each term $h_j m$ in $\cR_j$ corresponds to at
least one term $m_L h_i m_M h_j m$ of $q$, so it must be in $E_V$.

\bexe
 \label{exe:degreebound}
 Consider Equation \eqref{eq:rep1} and
prove the degree bound on the $Z_{ij}$ in {\rm (3)}.
{\em Hint}: Read Example {\rm\ref{exa:Zd4}} first.
\eexe

\bexa
\label{exa:Zd4}
If $p(x)$ is a symmetric polynomial of degree $d=4$
in $g$ noncommuting variables, then the middle matrix $Z(x)$ in the
representation of the Hessian $p^{\prime\prime}(x)[h]$ is
\bes
Z(x) =
\bmat
Z_{00}(x)&Z_{01}(x)& Z_{02}(x)\\
Z_{10}(x)&Z_{11}(x)& 0\\
Z_{20}(x)&0&0
\emat,
\ees
where the block entries $Z_{ij}=Z_{ij}(x)$ have the following structure:
$$
\begin{array}{l}
Z_{00}\quad \textrm{is a}\ \ g \times g \ \
\textrm{matrix with nc polynomial entries of degree}\  \le 2,\\
Z_{01}\quad \textrm{is a}\ \ g \times g^2 \ \
\textrm{matrix with with nc polynomial entries of degree}\  \le 1,\\
Z_{02}\quad \textrm{is a}\ \ g \times g^3 \ \
\textrm{matrix with constant entries}.
\end{array}
$$
All of these are proved merely by keeping track of the degrees.
For example, the contribution of $Z_{02}$ to $p''$
is $V_0^{\ss} Z_{02} V_2$ whose degree is
$$ \deg ( V_0^{\ss} ) +  \deg ( Z_{02} ) +  \deg ( V_2 )= 1  +  \deg ( Z_{02}) +  3  \le 4,$$
so $\deg ( Z_{02} ) = 0$.
\eexa

\subsection{The Middle Matrix of $p^{\prime\prime}$.}
The middle matrix $Z(x)$ of the Hessian $p^{\prime\prime}(x)[h]$ of an nc
symmetric polynomial $p(x)$ plays a key role.
These middle matrices have
a very rigid structure similar to that in Example \ref{ex:x5}.
We illustrate  with an  example and then with  exercises.
\index{Hessian}\index{middle matrix}

\bexa
 \label{exe:warmandfuzzy}
  As a warm up we first illustrate that 
  $Z_{02}(X)=0$ if and only if $Z_{11}(X)=0$ for
  Example \ref{ex:xyxy}. To this end,  observe that
  the contribution of the MM's extreme outer diagonal element
  $Z_{02}$
  to $q$ is as follows
  \bes
    \frac12 V_0(x)[h]^{\ss} Z_{02}(x) V_2(x)[h] =
     \begin{bmatrix} h_1 \\  h_2 \end{bmatrix}^\ss
  \begin{bmatrix} 0 & 2 \\  2 & 0 \end{bmatrix}
  \begin{bmatrix} h_1x_2x_1 \\  h_2x_1 x_2 \end{bmatrix}  
  =  2 h_1 h_2 x_1 x_2   + 2 h_2 h_1 x_2 x_1.
   \ees
 Substitute  $h_j \rightsquigarrow x_j$ and get
 $ 2 x_1 x_2 x_1 x_2   + 2 x_2 x_1 x_2 x_1  
$ which is  $2p(x)$.
That is,
  \[
  p(x) =  \frac12 V_0(x)[x]^{\ss} Z_{02}(x) V_2(x)[x],
\]
  where $V_k(x)[h]$ is the homogeneous, in $x$,  of degree $k$ 
  part of the border vector $V$.
Obviously,  $Z_{02}=0$ implies $p=0$. 
\eexa

\bexe
  Show $p(x)$ can also be obtained from $Z_{11}$ in a similar fashion; i.e.,
\[
  p(x)=\frac12 V_1(x)[x]^\ss Z_{11}(x) V_1(x)[x].
\]
\eexe

\bexe
 \label{exe:heavyLift}
 Suppose $p$ is homogeneous of degree $d$
  and its Hessian $q$ has the border vector middle matrix
  representation $q(x)[h]=V(x)[h]^\ss Z(x)V(x)[h]$. 
\ben[\rm (a)]
\item
  Show,
$$
  p =   \frac{1}{2}V_0(x)[x]^\ss Z_{0\ell} V_{\ell}(x)[x]
$$
with $\ell=d-2$.
Prove this formula for $d=2$, $d=4$.

\item
  Show that likewise,
\[
  p =  \frac{1}{2}V_1(x)[x]^\ss  Z_{1,\ell-1}(x) V_{\ell-1}(x)[x]
\]
  Do not cheat and look this up in {\rm\cite{DGHM}},
  but do compare with Exercise {\rm\ref{exe:degreebound}}.
\een
\eexe

\bexe
 \label{exe:heavy}
  Let $Z$ denote the middle matrix for the Hessian of 
  a nc polynomial $p$. Show, if  
  $i+j=i^\prime + j^\prime$,
 then $Z_{ij}=0$ if and only if $Z_{i^\prime j^\prime} =0$.
\eexe

\subsection{Positivity of the Middle Matrix and the demise of nc convexity}
\label{sec:eigsMM}
 This section focuses on positivity of the middle matrix 
 of a Hessian.

 Why should we focus on the case where $Z(x)$ is positive
 semidefinite? 
In \cite{HMe98} it was shown that a polynomial $p\in\RR\ax$ is matrix convex
if and only if its  Hessian $p^{\prime\prime}(x)[h]$ is positive
(see Exercise \ref{exe:geomHess}).\index{convex polynomial}\index{polynomial!convex}
Moreover, if $Z(x)$ 
 is positive, then  the degree of $p(x)$ is at most two \cite{HM04a}.
The  proof of this degree constraint
given in Proposition \ref{lem:datmosttwo} below
using the more manageable bookkeeping scheme
in this chapter,
begins with the following exercise.

\bexe
 \label{exe:poshess}
Show that
$$\bmat
A&B\\B^\ss&0\emat,
$$
is positive semidefinite if and only if $A\succeq0$ and $B=0$.
 More refined versions of this fact appear as exercises
 later, see    
Exercise {\rm\ref{exe:Fsign}}.
\eexe

As we shall see we need not require our favorite functions be positive
everywhere. It is possible to work locally, namely on an open set.

\begin{proposition}
  \label{lem:datmosttwo}
Let $p=p(x)$ be a symmetric polynomial of degree $d$ in $g$ nc variables
and let $Z(x)$ denote the 
middle matrix $($MM$)$  in the BV-MM representation of the Hessian
$p^{\prime\prime}(x)[h].$ If
   $Z(X) \succeq 0$ for all $ X$ in some nonempty 
  nc basic  open semialgebraic set $\cO,$
   then $d$ is at most two.
\end{proposition}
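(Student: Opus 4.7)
The plan is to argue by contradiction: assume $d \ge 3$, set $\ell := d-2 \ge 1$, and use the rigid triangular structure of the middle matrix together with Exercise \ref{exe:poshess} to force the top-degree homogeneous component of $p$ to vanish.

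First I would harvest the degree bounds on the blocks $Z_{ij}$ from property (3) of the BV--MM representation, namely $\deg Z_{ij}(x) \le \ell - (i+j)$. Two consequences are immediate: the corner block $Z_{0\ell}$ (and hence also $Z_{\ell,0}=Z_{0\ell}^{\ss}$) is a \emph{constant} matrix, and the block $Z_{\ell\ell}$ has degree bound $\le -\ell<0$ and thus vanishes identically. Consequently the principal submatrix of $Z(X)$ cut out by block rows and columns $\{0,\ell\}$ takes the form
$$\begin{bmatrix} Z_{00}(X) & Z_{0\ell} \\ Z_{0\ell}^{\ss} & 0 \end{bmatrix}.$$

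Next, because $\cO$ is nonempty, choose any $X_0\in\cO$; by hypothesis $Z(X_0)\succeq 0$, and every principal submatrix of a psd matrix is psd, so the $2\times 2$ block matrix above is psd at $X_0$. Exercise \ref{exe:poshess} then forces $Z_{0\ell}=0$.

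Finally I would translate $Z_{0\ell}=0$ into information about $p$. Decompose $p = p_0+p_1+\cdots+p_d$ into homogeneous parts. For $k<d$, the Hessian $p_k''$ has $x$-degree $k-2<\ell$, so its middle matrix is supported on blocks with $i+j\le k-2<\ell$; thus $p_k''$ contributes nothing to the antidiagonal of $Z$, and in particular $Z_{0\ell}$ is exactly the $(0,\ell)$ block of the middle matrix of $p_d''$. Now Exercise \ref{exe:heavyLift}(a), applied to the homogeneous polynomial $p_d$ of degree $d=\ell+2$, gives
$$p_d(x) \;=\; \tfrac12\, V_0(x)[x]^{\ss}\, Z_{0\ell}\, V_\ell(x)[x] \;=\; 0,$$
contradicting $\deg p = d$, and completing the proof. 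The main obstacle I anticipate is the bookkeeping in the last paragraph: making rigorous the claim that only $p_d$ feeds into the $(0,\ell)$ corner of $Z$, so that Exercise \ref{exe:heavyLift}(a) may be invoked even though $p$ itself is not homogeneous. Everything else — the degree bound, the principal-submatrix extraction, and the application of Exercise \ref{exe:poshess} — is purely mechanical.
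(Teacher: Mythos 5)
Your argument is correct and follows essentially the same route as the paper: contradiction via the anti-triangular block structure of the middle matrix, Exercise \ref{exe:poshess} to force $Z_{0\ell}=0$, and Exercise \ref{exe:heavyLift} to reconstruct $p_d$ and derive the contradiction. The bookkeeping worry you flag at the end resolves exactly as you sketch --- the middle-matrix assignment is linear in $p$, and the degree bound in property (3) kills the $(0,\ell)$ block coming from each $p_k^{\prime\prime}$ with $k<d$ --- though the paper sidesteps this (and the constancy of $Z_{0\ell}$) by instead noting that $p_d\neq 0$ cannot vanish on the nonempty nc basic open semialgebraic set $\cO$, so there is an $X\in\cO$ with $Z_{0\ell}(X)\neq 0$, hence $Z(X)\not\succeq 0$.
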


\begin{proof}
 Arguing by contradiction, 
 suppose $d\ge 3,$ then $p^{\prime\prime}(x)[h]$ is
 of degree $\ell=d-2 \geq 1$ in $x$
  and its middle matrix is of the form
$$Z =
\bmat
Z_{00}&\cdots& Z_{0\ell}\\
\vdots & \Ddots &\vdots\\
Z_{\ell 0}&\cdots&0
\emat.$$
Therefore, $Z(X)$ is of the form
$$
Z(X)=\bmat
A&B\\B^\ss&0\emat,
$$
where $A=A^\ss$ and $B^\ss=\begin{bmatrix}Z_{0\ell}(X) & 0 &\cdots &0
\end{bmatrix}$.
  From Exercise \ref{exe:heavyLift}, $p_d$, the homogeneous 
  degree $d$ part of $p,$ can be reconstructed from
  $Z_{0\ell}$.  Now there is an $X\in\cO$ such
  that $p_d(X)$ is nonzero, as otherwise 
  $p_d$ vanishes on a basic open semialgebraic set
  and is  equal to $0$. 
  It follows that there is an $X\in\cO$ such that 
  $Z_{0\ell}(X)$ is not zero.  Hence $B(X)$ is not zero 
  which implies, by Exercise \ref{exe:poshess}, the contradiction
   that $Z(X)$ is not positive semidefinite.
\end{proof}

We have now reached our goal of showing that convex polynomials have
degree $\leq2$.

\begin{theorem}
  \label{thm:convexd2}
 If $p \in \RR\ax$ is a symmetric polynomial 
 which is convex on a nonempty nc basic open semialgebraic set $\cO$, 
 then it has degree at most two.
\end{theorem}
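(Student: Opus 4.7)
The plan is to chain together three results already in hand: the equivalence of convexity with positivity of the Hessian, the \qpos\ (Theorem \ref{thm:qposSS}) that passes positivity of a homogeneous quadratic to its middle matrix, and the degree bound of Proposition \ref{lem:datmosttwo}. So the proof reduces to a very short assembly.

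First I would establish a local version of Lemma \ref{lem:CP}: if $p$ is convex on $\cO$, then $p^{\prime\prime}(X)[H]\succeq 0$ for every $n$, every $X\in\cO(n)$, and every $H\in\gtupn$. This is immediate from the geometric/analytic equivalence built in Exercise \ref{exe:geomHess}: for $v\in\RR^n$ the scalar function $f_v(Y)=v^\ss p(Y)v$ is convex on the open set $\cO(n)\subseteq\gtupn$, and since $\cO$ is open, for any $X\in\cO(n)$ and $H\in\gtupn$ the line segment $X+tH$ lies in $\cO(n)$ for small $|t|$, so the classical fact $(f_v)^{\prime\prime}(X)[H]\ge 0$ applies; the \emph{analytic} half of Exercise \ref{exe:geomHess} then yields $p^{\prime\prime}(X)[H]\succeq 0$ for all $X\in\cO$ and all $H$.

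Next, $q(x)[h]:=p^{\prime\prime}(x)[h]$ is a symmetric nc polynomial homogeneous of degree two in $h$, so it admits the BV--MM representation \eqref{eq:rep1} with a middle matrix $Z(x)$. The previous step says that $q(X)[H]\succeq 0$ for all $X\in\cO$ and all tuples $H$, which is precisely the hypothesis of the ``more generally'' clause of Theorem \ref{thm:qposSS} (the \qpos). Applying that theorem gives $Z(X)\succeq 0$ for every $X\in\cO$.

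Finally, Proposition \ref{lem:datmosttwo} asserts that positivity of the middle matrix of $p^{\prime\prime}$ on a nonempty nc basic open semialgebraic set forces $\deg p\le 2$. Invoking it concludes the proof. The only subtle step is the first one — pinning down that convexity on the open set $\cO$ transfers faithfully to pointwise positivity of the Hessian on $\cO$ — but this is essentially a bookkeeping exercise built on Exercise \ref{exe:geomHess} and the openness of $\cO$; the ``real'' work (the \qpos\ and the middle matrix degree argument) has already been done in Sections \ref{sec:middle-border} and \ref{sec:eigsMM}.
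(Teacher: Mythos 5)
Your proof is correct and follows precisely the same three-step chain the paper uses: convexity on $\cO$ passes to positivity of the Hessian on $\cO$ (Exercise \ref{exe:geomHess}), the \qpos\ (Theorem \ref{thm:qposSS}) then forces positivity of the middle matrix on $\cO$, and Proposition \ref{lem:datmosttwo} bounds the degree by two. The extra detail you supply on the first step (reducing to scalar functions $f_v$ and exploiting openness to get the second derivative along line segments) is a faithful unpacking of what the paper cites in passing.
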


There is a version of the theorem for free variables; i.e., with
 $p\in\RR\axxs$. 

\begin{proof}
The convexity of $p$ on $\cO$
is equivalent to $p''(X)[H]$
being positive semidefinite for all $X$ in $\cO$, see Exercise 
 \ref{exe:geomHess}.
By the QuadratischePositivstellensatz 
the middle matrix  $Z(x)$ for 
$p^{\prime\prime}(x)[h]$ is positive on $\cO;$ 
that is,
   $Z(X) \succeq 0$ for all $X \in \cO$. 
Proposition
\ref{lem:datmosttwo} implies degree $p$ is at most 2.
\end{proof}

\subsection{The signature of the middle matrix}
   This section introduces the notion of the 
 \df{signature}\index{middle matrix!signature} 
$\mu_\pm( Z(x) )$ of $Z(x)$, the middle matrix
 of a Hessian,  or more generally a 
 polynomial $q(x)[h]$ which is homogeneous of degree two in $h$.

The {\it signature of a symmetric matrix}\index{signature!symmetric matrix} 
$M$ is a triple of integers:
$$ \big(\mu_-( M ), \  \mu_0(M ), \  \mu_+( M ) \big),$$
where $\mu_-(M)$ is the number of negative eigenvalues (counted
  with multiplicity);
 $\mu_+(M)$ is the number of positive eigenvalues; and
 $\mu_0(M)$ is the dimension of the null space of $M$.

\begin{lem}
 \label{lem:middlezeros}
 A  nc symmetric  polynomial $q(x)[h]$ 
  homogeneous of degree two in $h$
has middle matrix  $Z$ of the form in \eqref{eq:rep1}
and $Z$ being   positive semidefinite implies $Z$ is of the form
$$
\bmat
Z_{00}&Z_{01}& \cdots&Z_{0, \lfloor \frac{\ell}{2}\rfloor }& 0&\cdots \\
Z_{10}&Z_{11}& \cdots&Z_{1, \lfloor \frac{\ell}{2}\rfloor}&0&\cdots \\
\vdots&\vdots&\Ddots&\vdots&\vdots&\Ddots\\
Z_{\lfloor \frac{\ell}{2}\rfloor, 0}&Z_{\lfloor \frac{\ell}{2}\rfloor, 1}& \cdots&Z_{\lfloor \frac{\ell}{2}\rfloor, \lfloor \frac{\ell}{2}\rfloor}&0&\Ddots \\
0&0& \cdots&0&0&\Ddots \\
\vdots &\vdots& \Ddots&\Ddots&\Ddots& 
\emat.
$$
\end{lem}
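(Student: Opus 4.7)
The plan is to combine a basic fact about positive semidefinite block matrices with the built-in degree bounds on the blocks $Z_{ij}$ coming from property (3) of the BV--MM representation in \eqref{eq:rep1}.

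First, I would recall from property (3) that each block $Z_{ij}(x)$ has entries of degree at most $\ell-(i+j)$. Since no nc polynomial has negative degree, this forces $Z_{ij}\equiv 0$ whenever $i+j>\ell$. Taking the diagonal case $i=j$, this gives the key observation
\[
 Z_{ii}\equiv 0 \qquad \text{for every } i>\lfloor \ell/2\rfloor,
\]
because then $2i>\ell$. This is the structural input from the representation itself.

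Next, I would use the standard linear-algebra fact: if a symmetric block matrix $M=\bigl[\begin{smallmatrix} A & B \\ B^\ss & C\end{smallmatrix}\bigr]$ is positive semidefinite and $C=0$, then $B=0$. (Exercise~\ref{exe:poshess} is the $1\times 1$ case; the block version follows in one line by testing $M$ against vectors of the form $(tu,w)$ and examining the linear term in $t$, or by a Schur complement/Cauchy--Schwarz argument.) Equivalently, in any PSD block matrix, a zero diagonal block forces its entire block row and column to vanish.

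Now fix an evaluation point $X$ at which $Z(X)\succeq 0$ holds. For each $i>\lfloor\ell/2\rfloor$ we have $Z_{ii}(X)=0$ by the first step. Applying the block fact to the $2\times 2$ block principal submatrix of $Z(X)$ formed by rows and columns $i$ and $j$ — a principal submatrix of a PSD matrix is PSD — we conclude $Z_{ij}(X)=0$ for every $j$. By symmetry $Z_{ji}(X)=Z_{ij}(X)^\ss=0$ as well. Thus the $i$-th block row and column of $Z(X)$ are identically zero for every $i>\lfloor\ell/2\rfloor$, which is precisely the shape displayed in the lemma.

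The only mild obstacle is to promote the pointwise vanishing $Z_{ij}(X)=0$ to vanishing of $Z_{ij}$ as an nc polynomial, so that the conclusion really is a statement about the symbolic middle matrix. This is handled by Proposition~\ref{prop:faith}: under the standing interpretation that ``$Z$ is positive semidefinite'' means $Z(X)\succeq 0$ on a sufficiently rich family of tuples $X$ (all of $\gtupn$ for every $n$, or any nonempty nc basic open semialgebraic set, exactly as in the QuadratischePositivstellensatz Theorem~\ref{thm:qposSS}), the pointwise vanishing on that family forces $Z_{ij}=0$ in $\RR\ax$. No further work is needed, and in particular the argument does not use the Hessian-specific Exercise~\ref{exe:heavy}, so it applies to the general $q(x)[h]$ in the statement.
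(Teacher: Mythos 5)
Your proof is correct and essentially matches the paper's, which obtains the conclusion from Lemma~\ref{lem:U11} (in the simplified form of Exercise~\ref{exe:Fsign}) applied to the block split $\{0,\dots,\lfloor\ell/2\rfloor\}$ versus $\{\lfloor\ell/2\rfloor+1,\dots,\ell\}$, using that the antidiagonal structure of \eqref{eq:rep1} makes the whole lower-right corner vanish; you reach the same thing one $2\times 2$ block principal submatrix at a time via Exercise~\ref{exe:poshess}. In both cases the key input is the degree bound forcing $Z_{ij}=0$ whenever $i+j>\ell$, combined with the fact that a vanishing diagonal block of a positive semidefinite matrix annihilates its entire block row and column.
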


This lemma follows immediately from a much more general
lemma.

\begin{lem}
  \label{lem:U11}
If
$$
E=\bmat
A&B&C\\
B^\ss&D&0\\
C^\ss&0&0
\emat
$$
is a real symmetric matrix, then
$$
\mu_{\pm}(E)\ge \mu_{\pm}(D)+\rank C.
$$
\end{lem}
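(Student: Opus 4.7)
\textbf{Proof plan for Lemma \ref{lem:U11}.}

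My plan is to exhibit, by hand, a subspace of dimension $\mu_+(D)+\rank C$ on which $E$ is positive definite, and a subspace of dimension $\mu_-(D)+\rank C$ on which $E$ is negative definite; the inequalities $\mu_\pm(E)\ge \mu_\pm(D)+\rank C$ then follow from the variational characterization of inertia. The core idea is to glue together eigenvectors of $D$ (which detect $\mu_\pm(D)$) with vectors of the form $(u,\phi,\pm tC^\ss u)$ that exploit the off-diagonal block $C$ to produce $\rank C$ additional positive (resp.\ negative) directions when $t$ is taken large.

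To set this up, let $r=\rank C$ and choose $u_1,\dots,u_r$ such that $C^\ss u_1,\dots,C^\ss u_r$ are linearly independent; this is possible because $\rank C^\ss =r$, and forces the $u_i$ themselves to be linearly independent. The Gram matrix $G:=[u_i^\ss CC^\ss u_j]_{i,j}=(C^\ss U)^\ss(C^\ss U)$ is then positive definite, where $U=[u_1\,|\,\cdots\,|\,u_r]$. Next pick an orthonormal family $w_1,\dots,w_p$ (with $p=\mu_+(D)$) of eigenvectors of $D$ with positive eigenvalues $\lambda_1,\dots,\lambda_p>0$. Define
\[
g_j := \begin{pmatrix} 0 \\ w_j \\ 0 \end{pmatrix},\qquad
f_i := \begin{pmatrix} u_i \\ \phi_i \\ t\,C^\ss u_i \end{pmatrix},\quad
\phi_i := -\sum_{j=1}^{p} \frac{u_i^\ss B w_j}{\lambda_j}\, w_j.
\]
The choice of $\phi_i$ is forced by the requirement that the cross pairings vanish: a direct computation gives
\[
f_i^\ss E g_j = u_i^\ss B w_j + \phi_i^\ss D w_j = u_i^\ss B w_j + \lambda_j\,\phi_i^\ss w_j = 0,
\]
using orthonormality of the $w_j$.

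The heart of the argument is then the calculation
\[
f_i^\ss E f_{i'} = u_i^\ss A u_{i'} + u_i^\ss B\phi_{i'}+\phi_i^\ss B^\ss u_{i'}+\phi_i^\ss D\phi_{i'} + 2t\, u_i^\ss CC^\ss u_{i'},
\]
and $g_j^\ss E g_k = \lambda_j\delta_{jk}$. The first summand in the $f$-block is a fixed symmetric matrix while the $t$-term is $2t\,G$ with $G\succ 0$; therefore for all sufficiently large $t>0$ the $r\times r$ matrix $[f_i^\ss E f_{i'}]$ is positive definite. Combined with the positive definite diagonal block $[g_j^\ss E g_k]=\mathrm{diag}(\lambda_j)$ and the vanishing cross block, the full $(r+p)\times(r+p)$ Gram matrix of $\{f_1,\dots,f_r,g_1,\dots,g_p\}$ is positive definite. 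Linear independence of this family is immediate: the $f_i$'s have linearly independent first coordinates $u_i$, while the $g_j$'s have first coordinate $0$. This produces the desired $(\mu_+(D)+r)$-dimensional positive subspace, yielding $\mu_+(E)\ge \mu_+(D)+\rank C$.

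The case of $\mu_-(E)$ is handled by the mirror construction: replace $t$ by $-t$ in $f_i$ (so the dominant term becomes $-2t\,G\prec 0$), use an orthonormal basis of the negative eigenspace of $D$ for the $g_j$, and adjust the sign in the definition of $\phi_i$ correspondingly so that the cross pairings again vanish. The only step requiring care is ensuring the $t$-independent ``junk'' in $f_i^\ss Ef_{i'}$ is dominated by the $\pm 2t\,G$ term; this is where having $G$ strictly positive definite (guaranteed by the choice of the $u_i$) is essential, and I expect it to be the only non-bookkeeping aspect of the write-up.
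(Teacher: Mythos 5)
Your proof is correct, and it takes a genuinely different route from the one the paper gestures at. The paper gives no proof of Lemma~\ref{lem:U11}; it only remarks that it can be obtained by hammering away with the $LDL^\ss$ decomposition (the same tool it suggests for the simpler Exercise~\ref{exe:Fsign}). That route would use congruence transformations to diagonalize $E$ into scalar pivots and $2\times2$ hyperbolic blocks and then read off the inertia, with some care at rank degeneracies. You instead build the witnessing subspaces explicitly and invoke the variational characterization of inertia: the vectors $f_i$ with a large multiple $\pm t\,C^\ss u_i$ in the third slot create $\rank C$ directions dominated by the hyperbolic pairing $2t\,u_i^\ss CC^\ss u_{i'}$, and the $g_j$'s pick up the signature of $D$; your choice of $\phi_i$ correctly kills the cross terms $f_i^\ss E g_j$, and linear independence falls out of the first coordinate. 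This avoids the pivot/permutation bookkeeping entirely and is arguably cleaner for a self-contained write-up. One small simplification in your sketch of the negative case: no sign change in $\phi_i$ is actually needed — the same formula $\phi_i=-\sum_j (u_i^\ss Bw_j/\lambda_j)w_j$ annihilates the cross pairings whether the eigenvalues $\lambda_j$ are positive or negative, so the only changes are switching to the negative eigenvectors of $D$ and flipping $t\mapsto -t$.
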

This can be proved using the $L D L^{\Ms}$
decomposition which we shall not do here
but suggest the reader  apply the $L D L^{\Ms}$
hammer to the following simpler exercise.

\subsection{Exercises}

\bexe
\label{exer:Zanti}
True of False? 
If $p_d$ is homogeneous of degree $d$
and we let $Z$ denote the middle matrix of the Hessian
$p''(x)[h]$,
then for each  $k \leq d-2$
the degree of $Z_{i, k-i}$ is
independent of $i$.
\eexe

\bexe
Redo Exercise {\rm\ref{exe:geomHess}}
for convexity on a
nc basic open  semialgebraic set.
\eexe

\bexe
\label{exe:Fsign}
If $F=\begin{bmatrix}A&C\\C^\ss&0\end{bmatrix}$, then
$\mu_{\pm}(F)\ge \rank C$.
$($If you cannot do the general case, assume $A$ is invertible.$)$
\eexe

\bexe
If $p(x)$ is a symmetric polynomial of degree $d=2$
in $g$ noncommuting variables, then the middle matrix $Z(x)$ in the
representation of the Hessian $p^{\prime\prime}(x)[h]$ is equal to the
$g\times g$ constant matrix  $Z_{00}$.
Substituting $X \in \gtupn$ for $x$ gives
$$
\mu_\pm(Z(X)) \geq  \mu_\pm(Z_{00})
$$
\eexe

\bexe
Let $f\in\RR\ax_{2d}$ and let $V\in\ax_d^{\sigma(d)}$ be a vector
consisting of all words in $x$ of degree $\leq d$. Prove:
\ben[\rm (a)]
\item
there is
a matrix $G\in \RR^{\sigma(d)\times \sigma(d)}$ with
$f=V^\ss G V$
$($any such $G$ is called a \df{Gram matrix} for $f)$;
\item
if $f$ is symmetric, then there is a symmetric Gram matrix for $f$.
\een
\eexe\index{Gram matrix}

\bexe
Find all Gram matrices for 
\ben[\rm(a)]
\item
$f=x_1^4+x_1^2x_2-x_1x_2^2+x_2x_1^2-x_2^2x_1+x_1^2-x_2^2+2x_1-x_2+4$;
\item
$f=c(x_1,x_2)^2$.
\een
\eexe

\bexe
Show: if $f\in\RR\ax$ is homogeneous of degree $2d$, then it has a unique
Gram matrix
$G\in\RR^{\sigma(d)\times \sigma(d)}$.
\eexe

\subsection{A glimpse of history}
  There is a theory of operator monotone and operator convex 
  functions which overlaps with the matrix convex functions
  considered here in the case of one variable.  However, the points of
  view are substantially different, diverging markedly in
  several variables. 
  L\"owner introduced   a class of real analytic
  functions in one real variable called matrix monotone functions,
  which we shall not  define  here.
  L\"owner gave  integral representations
  and these have developed substantially  over the years.
  The contact with convexity came
  when L\"owner's student
  Kraus \cite{K36} introduced matrix convex functions
  $f$ in one variable.
  Such a function $f$ on $[0,\infty) \subseteq \RR$
  can be represented as $f(t) = tg(t)$  with $g$ matrix monotone,
  so the representations for $g$ produce representations  for $f$.
   Hansen has extensive deep work
  on matrix convex and monotone functions
   whose  definition  in several variables is different than the
   one we use here, see \cite{HanT06} or \cite{Han97}.
  All of this gives a beautiful  integral representation
  characterizing matrix convex functions using techniques
  very different from ours.
  An excellent  treatment of  the one variable case is \cite[Chapter 5]{B97}.
  Interestingly, to the best of our knowledge, the one variable
   version of Theorem \ref{thm:convexd2} (\cite{HM04a}) does not seem to be
   explicit in this classical literature.
However, it
   is an immediate consequence of the results
   of \cite{HanT06} where (not necessarily polynomial)
   operator convex functions on an interval are described.
This and  the papers of Hansen and \cite{OSTprept,U02}
   are some of the more recent references
in this line of convexity  history orthogonal to ours.
 
\section{Der QuadratischePositivstellensatz}
\label{sec:qposSS}

In this section we present the proof of the QuadratischePositivstellensatz\index{QuadratischePositivstellensatz}
(Theorem \ref{thm:qposSS}) which is based 
on the fact
that local linear dependence of nc rationals (or nc polynomials) 
implies global linear dependence, a fact itself based on the forthcoming
CHSY Lemma \cite{CHSY03}.\index{CHSY Lemma}

\subsection{The Camino, Helton, Skelton, Ye (CHSY) Lemma}
 \label{appendix:CHSY}
 At the root of the CHSY Lemma \cite{CHSY03} is the following
linear algebra fact:

\begin{lem}
 \label{lem:CHSY-start}
  Fix $n>d.$ If $\{z_1,\dots,z_d\}$ is a linearly independent
  set in $\RR^n$, then the codimension of  \begin{equation*}
   \left\{ \begin{bmatrix} Hz_1 \\ Hz_2 \\ \vdots \\ Hz_d
 \end{bmatrix}:
      H \in \SRnn \right\} \subseteq \RR^{nd}
  \end{equation*}
   is $\frac{d(d-1)}{2}$. 
   It is especially important that
this codimension is  independent
   of $n$.
\end{lem}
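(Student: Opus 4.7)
The plan is to exhibit the set in question as the image of the linear map
\[
\Phi:\SRnn \longrightarrow \RR^{nd},\qquad \Phi(H):=\begin{bmatrix}Hz_1\\ \vdots\\ Hz_d\end{bmatrix},
\]
compute $\dim\ker\Phi$ explicitly, and then invoke rank--nullity to read off the codimension of $\operatorname{ran}\Phi$ in $\RR^{nd}$. The starting data are $\dim\SRnn=\tfrac{n(n+1)}{2}$ and $\dim\RR^{nd}=nd$, so the whole problem reduces to identifying which symmetric $H$ kill every $z_i$.

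First I would observe that $\Phi(H)=0$ iff $H$ vanishes on $V:=\operatorname{span}\{z_1,\dots,z_d\}$, a $d$-dimensional subspace (by hypothesis of linear independence). Using the orthogonal decomposition $\RR^n=V\oplus V^\perp$ and writing a symmetric $H$ in block form
\[
H=\begin{bmatrix}A & B\\ B^{\ss} & C\end{bmatrix}
\]
with $A\in\SS^{d\times d}$, $C\in\SS^{(n-d)\times(n-d)}$ and $B\in\RR^{d\times(n-d)}$, the condition $H|_V=0$ is exactly $A=0$ and $B=0$. So $\ker\Phi$ is parametrized freely by the block $C$, giving
\[
\dim\ker\Phi=\tfrac{(n-d)(n-d+1)}{2}.
\]

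Then by rank--nullity,
\[
\dim\operatorname{ran}\Phi=\tfrac{n(n+1)}{2}-\tfrac{(n-d)(n-d+1)}{2}=\tfrac{d(2n-d+1)}{2},
\]
and the codimension in $\RR^{nd}$ is
\[
nd-\tfrac{d(2n-d+1)}{2}=\tfrac{d(d-1)}{2},
\]
which is the claimed value and is visibly independent of $n$.

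The proof is essentially a dimension count; there is no real obstacle. The only mild subtlety is making sure the decomposition step is honest: one should note that a change of orthonormal basis puts $V$ into the first $d$ coordinates without changing the dimension of $\ker\Phi$ or of $\operatorname{ran}\Phi$, so the block-form analysis above is legitimate for arbitrary linearly independent $\{z_i\}$, not only for the standard basis vectors. Equivalently (and this is a good sanity check for the answer), the $\binom{d}{2}$ symmetry relations $\langle w_i,z_j\rangle=\langle z_i,w_j\rangle$ for $i<j$ on $w_i:=Hz_i$ give $\binom{d}{2}$ linearly independent affine constraints cutting out exactly $\operatorname{ran}\Phi$, which matches the codimension computed above.
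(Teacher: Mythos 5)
Your proof is correct and follows essentially the same route as the paper: both identify the set as the range of the linear map $\Phi:\SRnn\to\RR^{nd}$, reduce (after an orthogonal change of basis) to computing $\dim\ker\Phi=\tfrac{(n-d)(n-d+1)}{2}$ via the block structure of a symmetric $H$ annihilating the first $d$ coordinates, and apply rank--nullity. The closing sanity check via the $\binom{d}{2}$ symmetry relations $\langle Hz_i,z_j\rangle=\langle z_i,Hz_j\rangle$ is a nice extra that the paper does not include, but the core argument is the same.
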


 The following exercise  is a 
 variant of the Lemma \ref{lem:CHSY-start} which is easier to prove.
 Thus we suggest attempting it  before launching into the proof of
 the lemma.

\bexe
  Prove if  $\{z_1,\dots,z_d\}$ is a linearly independent
 set in $\RR^n$, then
$$
\left\{ \begin{bmatrix} 
Hz_1 \\ Hz_2 \\ \vdots \\ Hz_d
 \end{bmatrix} :
      H \in \RR^{n\times n} \right\} = \RR^{nd}
$$
{\em Hint}: it goes like the proof of  {\rm\eqref{eq:Rhatv}}.
\eexe

\begin{proof}[Proof of Lemma {\rm\ref{lem:CHSY-start}}]
   Consider the mapping $\Phi:\SRnn\to\RR^{nd}$ given by
 \begin{equation*}
   H \mapsto
      \begin{bmatrix} Hz_1 \\ Hz_2 \\ \vdots \\ Hz_d \end{bmatrix}.
 \end{equation*}
   Since the span of $\{z_1,\dots,z_d\}$ has dimension $d$, it follows
 that
   the kernel of $\Phi$ has dimension $\kappa=\frac{(n-d)(n-d+1)}{2}$
 and
   hence the range has dimension $\frac{n(n+1)}{2}-\kappa$.  To see
 this
   assertion, it suffices to assume that the span of
 $\{z_1,\dots,z_d\}$
   is the span of $\{e_1,\dots,e_d\}\subseteq \RR^n$ (the first
  $d$ standard basis vectors in $\RR^n$).
  In this  case (since $H$ is symmetric) $Hz_j=0$ for all $j$  if and only if
 \begin{equation*}
    H=\begin{bmatrix} 0 & 0 \\ 0 & H^\prime \end{bmatrix},
 \end{equation*}
    where $H^\prime$ is a  symmetric matrix of size $(n-d)\times (n-d)$;
    in other words, this is the kernel of $\Phi$.

   From this we deduce
   that the codimension of the range of $\Phi$ is
 \begin{equation*}
    nd -\Big(\frac{n(n+1)}{2}-\kappa \Big) = \frac{d(d-1)}{2}, 
 \end{equation*}
 concluding the proof.
\end{proof}

Next is a straightforward extension of Lemma \ref{lem:CHSY-start}.

\begin{lem}[\cite{CHSY03}]
 \label{lem:CHSY}
 If $n>d$ and $\{z_1,\dots,z_d\}$ is a linearly independent
 subset of $\RR^n$, then the codimension of
\begin{equation*}
  \Big\{ \oplus_{j=1}^g \begin{bmatrix} H_jz_1 \\ H_jz_2 \\ \vdots \\
 H_jz_d \end{bmatrix} :
     H=(H_1,\dots,H_g)\in \gtupn\Big\} \subseteq \RR^{gnd}
\end{equation*}
 is $g\frac{d(d-1)}{2}$ and is independent of $n$.
\end{lem}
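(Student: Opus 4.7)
The plan is to reduce the statement directly to Lemma \ref{lem:CHSY-start} by observing that the map under consideration decouples into $g$ independent copies of the map already analyzed.

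More precisely, I would define, for each $j=1,\dots,g$, the linear map $\Phi_j\colon \Snn\to\RR^{nd}$ by
$$
\Phi_j(H_j) = \begin{bmatrix} H_j z_1 \\ H_j z_2 \\ \vdots \\ H_j z_d \end{bmatrix},
$$
which is exactly the map $\Phi$ from the proof of Lemma \ref{lem:CHSY-start}. Then the map in question is the direct sum
$$
\Phi_{(g)}\colon (\Snn)^g \to \bigoplus_{j=1}^g \RR^{nd} = \RR^{gnd},
\qquad (H_1,\dots,H_g) \mapsto \bigoplus_{j=1}^g \Phi_j(H_j).
$$
Since each summand $\Phi_j(H_j)$ depends only on its own $H_j$ (the domain factors as a direct sum and each factor maps independently into its own copy of $\RR^{nd}$), the image of $\Phi_{(g)}$ is the direct sum of the images:
$$
\operatorname{ran}\Phi_{(g)} \;=\; \bigoplus_{j=1}^g \operatorname{ran}\Phi_j.
$$

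From here, the codimension in $\RR^{gnd}$ of the image of $\Phi_{(g)}$ equals the sum of the codimensions of $\operatorname{ran}\Phi_j$ in their respective copies of $\RR^{nd}$. By Lemma \ref{lem:CHSY-start} each such codimension equals $\tfrac{d(d-1)}{2}$, independent of $n$, so the total codimension is $g\cdot \tfrac{d(d-1)}{2}$, also independent of $n$. There is no real obstacle here — the only thing to check is the elementary fact that codimensions add under direct sums, which is immediate from $\dim(\bigoplus V_j) = \sum \dim V_j$ applied to both the ambient space and the range. Thus the extension to $g$ variables is essentially bookkeeping on top of the already-established $g=1$ case.
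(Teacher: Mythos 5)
Your proof is correct. The paper relegates the proof of Lemma \ref{lem:CHSY} to Exercise \ref{exe:lemCHSY} and gives no argument of its own, but your decoupling reduction — observing that $\Phi_{(g)}$ is a direct sum of $g$ independent copies of the map $\Phi$ from Lemma \ref{lem:CHSY-start}, so that both the range and its codimension decompose additively — is precisely the straightforward bookkeeping the authors evidently intend.
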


\begin{proof}
See Exercise \ref{exe:lemCHSY}.
\end{proof}

  Finally, the form in which we generally apply the lemma is the
 following.

\begin{lem}
 \label{lem:CHSY-in-action}
  Let $v\in\RR^n$, $X\in \gtupn$.
If the set $\{m(X)v \colon m \in \ax_d \}$ is
 linearly
  independent, then the codimension of
$$
  \{V(X)[H]v \colon H\in \gtupn\}
$$
 is  $g\frac{\kappa(\kappa-1)}{2}$, where
  $\kappa =\sigma(d)=\sum_{j=0}^{d} g^j$ and where 
$$V=\bigoplus_{i=1}^g \bigoplus_{m\in\ax_d} H_i m$$ is the
  border vector associated to $\ax_d$. 
  Again, this codimension is independent of $n$ 
  as it only depends upon the number of variables $g$
  and the degree $d$ of the polynomial. 
\end{lem}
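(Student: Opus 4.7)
The plan is to reduce Lemma \ref{lem:CHSY-in-action} directly to Lemma \ref{lem:CHSY} by unpacking the border vector and making an appropriate identification of vectors.

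First, I would enumerate the words of degree at most $d$ as $m_1,\ldots,m_\kappa$ where $\kappa=\sigma(d)=\sum_{j=0}^{d} g^{j}$, and for each such word define
\[
 z_j := m_j(X)\,v \ \in\ \RR^n, \qquad j=1,\ldots,\kappa.
\]
The hypothesis that $\{m(X)v : m\in\ax_d\}$ is linearly independent says exactly that $\{z_1,\ldots,z_\kappa\}$ is a linearly independent subset of $\RR^n$ (in particular $n\geq\kappa$, so we are in the regime where Lemma \ref{lem:CHSY} applies with $d$ replaced by $\kappa$).

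Next I would unwind the definition of $V$. Since $V=\bigoplus_{i=1}^{g}\bigoplus_{m\in\ax_d} H_i m$, each entry of $V(X)[H]$ is an $n\times n$ block of the form $H_i\, m_j(X)$, and so
\[
 V(X)[H]\,v \ =\ \bigoplus_{i=1}^{g}\bigoplus_{j=1}^{\kappa} H_i\, m_j(X)\,v
 \ =\ \bigoplus_{i=1}^{g}\bigoplus_{j=1}^{\kappa} H_i\, z_j \ \in\ \RR^{g\kappa n}.
\]
Thus, as $H=(H_1,\ldots,H_g)$ ranges over $\gtupn$, the set $\{V(X)[H]v\}$ is exactly the subspace
\[
 \Big\{\ \bigoplus_{i=1}^{g} \bigl[H_i z_1,\ldots,H_i z_\kappa\bigr]^{\ss} : H\in\gtupn\ \Big\} \ \subseteq\ \RR^{g\kappa n}
\]
appearing in Lemma \ref{lem:CHSY} (with $d\rightsquigarrow\kappa$).

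Finally, I would invoke Lemma \ref{lem:CHSY} directly to conclude that this codimension equals $g\frac{\kappa(\kappa-1)}{2}$, independent of $n$. The only thing to verify carefully is that the indexing of the border vector's entries matches (up to reordering of coordinates, which does not affect codimension); this is a bookkeeping step and is the mildest obstacle. There is no substantive new difficulty beyond Lemma \ref{lem:CHSY}, so the proof amounts to recognizing that the quantities $z_j=m_j(X)v$ play exactly the role of the abstract linearly independent vectors in the earlier lemma.
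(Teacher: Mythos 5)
Your proof takes essentially the same approach as the paper: the paper's own proof consists of the single line ``Let $z_m = m(X)v$ for $m\in\ax_d$. There are at most $\kappa$ of these. Now apply the previous lemma,'' and your argument simply spells out the details of that reduction (enumerating the words, observing that $V(X)[H]v$ has exactly the block structure required by Lemma \ref{lem:CHSY} with $d$ replaced by $\kappa$, and noting that linear independence forces $n\geq\kappa$). The unpacking is correct and the bookkeeping identification you flag does indeed go through without issue.
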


\begin{proof}
  Let $z_m = m(X)v$ for $m\in\ax_d$.
  There are at most $\kappa$ of these. Now apply the previous lemma.
\end{proof}

\subsection{Linear Dependence of Symbolic Functions}
  \label{sec:lindep}
   The main result in this section, Theorem \ref{thm:gendep}
   says roughly that if
   each evaluation of a set $G_1, \ldots G_\ell$
    of rational functions 
    produces linearly dependent matrices, then
   they satisfy a universal linear dependence  
relation. We begin
   with a clean and easily stated consequence of Theorem \ref{thm:gendep}.

In Subsection \ref{subsec:ncsa}
we defined nc basic open  semialgebraic sets.
 Here we define 
a nc  basic  semialgebraic set.  Given matrix-valued 
  symmetric nc polynomials
  $\rho$ and $\tilde{\rho}$, let
\[
        \cD_+^\rho(n) = \{ X \in \gtupn \colon \rho(X)\succ 0\},
\]
 and
\[
        \cD^{\tilde \rho}(n) =\{ X \in \gtupn \colon \tilde{\rho}(X)\succeq 0\}.
  \]
  Then $\cD$ is a \df{nc basic semialgebraic set}\index{semialgebraic set} if there exists
  $\rho_1,\dots,\rho_k$ and $\tilde{\rho_1},\dots,\tilde\rho_{\tilde{k}}$
  such that $\cD=(\cD(n))_{n\in\NN}$ where
\[
  \cD(n) = \big(\bigcap_j \cD_+^{\rho_j}(n)\big) \cap 
\big(\bigcap_j \cD^{\tilde\rho_{\tilde{j}}}(n)\big).
\]

\begin{thm}
     \label{thm:depevals}
      Suppose $G_1$, $\dots$, $G_{\ell}$ 
      are  rational expressions 
and      $\cD$ is a nonempty  nc basic semialgebraic set on 
      which each $G_j$ is defined. 
      If, for each $X \in \cD(n)$
     and vector $v\in\RR^n$ the set 
      $\{G_j(X)v \colon j=1,2,\dots,\ell\}$ is
      linearly dependent,
    then the set $\{G_j(X) \colon j=1,2,\dots,\ell\}$ is linearly dependent on $\cD$, i.e.
    there exists a nonzero $\lambda\in\RR^\ell$ such that
\bes
      0=\sum_{j=1}^\ell \lambda_j G_j(X) \qquad  \text{for  all }X \in   \cD.
\ees
 If, in addition,  $\cD$ 
contains an $\eps$-neighborhood
of $0$ for some $\eps>0$, then
there exists a nonzero $\lambda\in\RR^\ell$ such that
\bes
      0=\sum_{j=1}^\ell \lambda_j G_j.
\ees
\end{thm}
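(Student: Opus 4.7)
My plan is to argue the contrapositive: assuming that no nonzero $\lambda\in\RR^\ell$ satisfies $\sum_j\lambda_jG_j(X)=0$ for every $X\in\cD$, I will produce some $X\in\cD$ and some $v$ making $\{G_j(X)v\}_{j=1}^\ell$ linearly independent, contradicting the hypothesis.

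The first step is a direct-sum reduction. For each $X\in\cD$, set $K_X:=\{\lambda\in\RR^\ell : \sum_j\lambda_jG_j(X)=0\}$; these are subspaces of the finite-dimensional space $\RR^\ell$. The set $\cD$ is closed under direct sums: for any matrix-valued nc polynomial $\rho$, $\rho(X\oplus Y)$ is permutation-conjugate to $\rho(X)\oplus\rho(Y)$, so the positivity constraints defining $\cD$ are preserved, and block-diagonal invertibility shows the domain of regularity of each $G_j$ is respected as well. Hence $K_{X\oplus Y}=K_X\cap K_Y$, and by finite-dimensionality of $\RR^\ell$ there exist $X_1,\dots,X_m\in\cD$ with $\bigcap_i K_{X_i}=\bigcap_{X\in\cD}K_X$; by the contrapositive hypothesis the right-hand side is $\{0\}$. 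Setting $X_*:=X_1\oplus\dots\oplus X_m\in\cD$, of some size $n_*$, one gets $K_{X_*}=\{0\}$, i.e. $G_1(X_*),\dots,G_\ell(X_*)$ are linearly independent as matrices in $\RR^{n_*\times n_*}$.

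Next I create room for a cyclic vector by tensoring with an identity. Define $\hat X:=X_*\otimes I_{n_*}$; then $\hat X\in\cD$ (same reasoning as above) and $G_j(\hat X)=G_j(X_*)\otimes I_{n_*}$, still linearly independent, now acting on $\RR^{n_*^2}$. Take $v:=\sum_{a=1}^{n_*}e_a\otimes e_a\in\RR^{n_*^2}$. A direct computation gives
\[
G_j(\hat X)\,v \;=\; \sum_a \bigl(G_j(X_*)e_a\bigr)\otimes e_a,
\]
which, viewed as a block vector, is precisely the vectorization (column-stacking) of the matrix $G_j(X_*)$. Since vectorization is a linear isomorphism, $\{G_j(\hat X)v\}_j$ is linearly independent exactly because $\{G_j(X_*)\}_j$ is. This contradicts the hypothesis, yielding the first assertion. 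For the second assertion, the rational expression $r:=\sum_j\lambda_jG_j$ found above vanishes on all of $\cD$, hence on the nonempty nc basic open semialgebraic set $\cN_\eps\subseteq\cD$, so by Exercise \ref{exe:ratVanish} we have $r=0$ as an nc rational function.

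The main subtlety I expect is the bookkeeping around Step 1: verifying carefully that direct sums keep us inside $\cD$ and inside the domain of regularity of each rational expression $G_j$, and that the finite-intersection argument really produces a single witness $X_*$ whose kernel $K_{X_*}$ is the minimum over all of $\cD$. This is exactly where the hypothesis that $\cD$ is a (basic) nc semialgebraic set — rather than a more exotic subset of $\bigsqcup_n\gtupn$ — is essential. Step 3 (the cyclic-vector construction) is the conceptual heart of the argument but is straightforward once the tensoring-with-identity trick is in place.
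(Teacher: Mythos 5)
Your proof is correct, and it takes a genuinely different route from the paper's. The paper proves a general abstract result (Theorem \ref{thm:gendep}) about ``natural maps'' on sets that ``respect direct sums'': a finitary lemma (Lemma \ref{lem:genfinitedep}) is obtained by taking the direct sum of a finite collection of pairs $(X,v)$, and the global $\lambda$ is then extracted by a compactness argument (the finite intersection property of the closed sets $\Omega_{(X,v)}$ in the unit sphere of $\RR^\ell$). You instead argue the contrapositive at the level of the matrices $G_j(X)$ rather than the vectors $G_j(X)v$: you exploit the linear structure of the kernels $K_X\subseteq\RR^\ell$ and the identity $K_{X\oplus Y}=K_X\cap K_Y$ to locate a single $X_*$ with $K_{X_*}=\{0\}$ after at most $\ell$ direct summands, a Noetherian shortcut that replaces compactness. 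Then you need to pass from independence of the matrices $G_j(X_*)$ to independence of the vectors $G_j(\hat X)v$ for some concrete $v$, which is exactly what the tensoring trick $\hat X=X_*\otimes I_{n_*}$ with $v=\sum_a e_a\otimes e_a$ delivers via vectorization. This last step, absent from the paper's argument, is the price paid for working with matrices rather than with the pairs $(X,v)$ directly. What the paper's approach buys is reusability: Theorem \ref{thm:gendep} applies to arbitrary natural maps, not just rational expressions, whereas your Noetherian-plus-vectorization route uses both the linear structure of the $K_X$ and the compatibility $G_j(X\otimes I)=G_j(X)\otimes I$, which is specific to evaluations of nc rational expressions. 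What your approach buys is concreteness and economy: no compactness, no abstract ``respects direct sums'' machinery, and an explicit witness $(\hat X,v)$ at the end.

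One small remark: as written your argument is for scalar rational expressions; if the $G_j$ were $d\times d'$-matrix-valued (as in Example \ref{ex:respDS}), the vectorization step would need the cyclic vector $v$ to live in $\RR^{d'n_*}\otimes\RR^{n_*}$ and be adapted accordingly, but the idea carries over. The direct-sum and tensor-with-identity closure of $\cD$ and of the domains of regularity are, as you say, the main bookkeeping burdens, and your treatment of them is adequate.
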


\begin{cor}
     \label{cor:depevalsSimple0}
      Suppose $G_1$, $\dots$, $G_{\ell}$ 
      are  rational expressions. 
      If, for each $n\in\NN$, $X \in \gtupn$,
    and vector $v\in\RR^n$ 
the set 
      $\{G_j(X)v \colon j=1,2,\dots,\ell\}$ is
      linearly dependent,
    then the set $\{G_j \colon j=1,2,\dots,\ell\}$ is linearly dependent,
i.e.,
    there exists a nonzero $\lambda\in\RR^\ell$ such that
\bes
      \sum_{j=1}^\ell \lambda_j G_j=0.
\ees
\end{cor}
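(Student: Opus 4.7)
The plan is to deduce Corollary \ref{cor:depevalsSimple0} as a direct specialization of Theorem \ref{thm:depevals}. The hypothesis of the corollary imposes the pointwise linear-dependence condition on every tuple $X$ (wherever the $G_j$ are defined) rather than only on some prescribed nc basic semialgebraic set $\cD$. So my first move is to manufacture an appropriate $\cD$ and invoke the theorem.

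Concretely, I will take
$$\cD(n) := \gtupn \cap \bigcap_{j=1}^\ell \dom(G_j).$$
The domain of each rational expression is, by its inductive construction, carved out by strict polynomial inequalities requiring prescribed intermediate rationals to be invertible, so each $\dom(G_j)$ is an nc basic semialgebraic set, and a finite intersection of nonempty sets of this form is again a nonempty nc basic semialgebraic set. The hypothesis of the corollary, restricted to $X\in\cD$, is exactly the hypothesis of Theorem \ref{thm:depevals}, so the theorem produces a nonzero $\lambda\in\RR^\ell$ with $\sum_j \lambda_j G_j(X) = 0$ for every $X\in\cD$. Since $\cD$ coincides with the joint domain of $G_1,\dots,G_\ell$, the rational expression $\sum_j \lambda_j G_j$ vanishes at every point where it is defined, and therefore represents the zero rational function by the definition of rational function equivalence (or by Exercise \ref{exe:ratVanish}).

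The only step with real content, and the place I would expect a referee to press, is verifying that $\cD$ is genuinely nonempty and carries the semialgebraic structure required by Theorem \ref{thm:depevals}. This amounts to unwinding the inductive construction of a rational expression: each $\dom(G_j)$ is the complement of finitely many nc varieties (the vanishing loci of the denominators at each nested inversion), and a finite union of such proper subvarieties cannot exhaust the entire universe of tuples in sufficiently large dimension. Beyond this bookkeeping, no further appeal to the CHSY lemma or border-vector machinery is needed, as the heavy lifting has already been done inside Theorem \ref{thm:depevals}.
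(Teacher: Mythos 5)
Your overall route is the correct one, and it is what the paper clearly intends (the corollary appears immediately after Theorem~\ref{thm:depevals} with no separate proof). You correctly recognize that the second conclusion of Theorem~\ref{thm:depevals} cannot be invoked directly, since the $\eps$-neighborhood of $0$ need not lie in the common domain of the $G_j$ (consider $G_1 = x_1^{-1}$), and you correctly route around this by taking the first conclusion and then applying Exercise~\ref{exe:ratVanish}. That is exactly the mechanism the paper's proof of Theorem~\ref{thm:depevals} uses for its own last sentence.

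There are two points where your sketch glosses over something that would need to be said explicitly. First, to feed $\cD(n)=\gtupn\cap\bigcap_j\dom(G_j)$ into Theorem~\ref{thm:depevals}, and then into Exercise~\ref{exe:ratVanish}, you need $\cD$ to be a nonempty nc basic \emph{open} semialgebraic set in the paper's sense, i.e., cut out by finitely many conditions $\rho_i(X)\succ 0$ with $\rho_i$ symmetric matrix nc polynomials. The translation ``$M(X)$ invertible'' $\Leftrightarrow$ ``$M(X)^\ss M(X)\succ 0$'' gives the required form, but only after one has cleared nested denominators in the inductive construction of each $G_j$ so that every invertibility requirement is imposed on a genuine nc \emph{polynomial} matrix rather than on an intermediate rational; you allude to this but do not carry it out. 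Second, and more substantively, the assertion that ``a finite intersection of nonempty sets of this form is again a nonempty nc basic semialgebraic set'' is not automatic: two nonempty nc basic open semialgebraic sets can be disjoint, and the paper's blanket assumption $\dom(r)\neq\emptyset$ is only for each expression individually. The nonemptiness of the \emph{common} domain $\bigcap_j\dom(G_j)$ must be argued — e.g., by observing that for $n$ large each $\dom(G_j)\cap\gtupn$ is a nonempty Zariski-open (hence Zariski-dense) subset of the affine space $\gtupn$, so their finite intersection is nonempty — and this in turn requires showing that a matrix nc polynomial whose determinant is not identically zero at some level has generically nonzero determinant at all sufficiently large levels. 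This is true and standard in the generic-matrix / PI-theory circle of ideas, but it is not mere bookkeeping and deserves an explicit argument or citation. Once those two points are filled in, your proof is complete.
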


\begin{cor}
     \label{cor:depevalsSimple1}
      Suppose $G_1$, $\dots$, $G_{\ell}$ 
      are  rational expressions. 
      If, for each $n\in\NN$ and $X \in \gtupn$,
the set 
      $\{G_j(X) \colon j=1,2,\dots,\ell\}$ is
      linearly dependent,
    then the set $\{G_j \colon j=1,2,\dots,\ell\}$ is linearly dependent.
\end{cor}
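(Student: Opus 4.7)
The plan is to reduce Corollary \ref{cor:depevalsSimple1} directly to Corollary \ref{cor:depevalsSimple0}, which in turn rests on Theorem \ref{thm:depevals} (the content of which, built on the CHSY Lemma, is doing all the real work). The key observation is that linear dependence of a collection of matrices is a \emph{stronger} condition than linear dependence of the images of those matrices applied to any single vector, with the same coefficients witnessing both statements.

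First I would unpack the hypothesis. Fix $n \in \NN$ and $X \in \gtupn$ in the common domain of the $G_j$. By assumption, the matrices $G_1(X), \dots, G_\ell(X)$ are linearly dependent, so there exists a nonzero $\lambda(X) \in \RR^\ell$ (possibly depending on $X$) with
\[
   \sum_{j=1}^\ell \lambda_j(X)\, G_j(X) \;=\; 0.
\]
Multiplying this matrix identity on the right by an arbitrary vector $v$ of the appropriate size yields
\[
   \sum_{j=1}^\ell \lambda_j(X)\, G_j(X)\, v \;=\; 0,
\]
so that $\{G_1(X)v, \dots, G_\ell(X)v\}$ is a linearly dependent set of vectors, and the same nonzero $\lambda(X)$ serves as a witness for every choice of $v$.

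This shows the hypothesis of Corollary \ref{cor:depevalsSimple0} is satisfied. Invoking that corollary, we obtain a single nonzero $\lambda \in \RR^\ell$, now independent of $X$, such that $\sum_j \lambda_j G_j = 0$ as rational expressions, which is the desired conclusion. There is essentially no obstacle here: all the hard analysis, in particular the passage from pointwise-in-$X$ dependence to a uniform dependence relation, is absorbed into Theorem \ref{thm:depevals} and its proof via the CHSY codimension count. Corollary \ref{cor:depevalsSimple1} is then a one-line consequence, differing from Corollary \ref{cor:depevalsSimple0} only in a trivial strengthening of the assumption.
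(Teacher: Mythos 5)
Your reduction is correct and matches the only sensible route here: the hypothesis that the matrices $G_j(X)$ are linearly dependent is strictly stronger than the hypothesis that the vectors $G_j(X)v$ are linearly dependent for each $v$ (the same nonzero $\lambda(X)$ witnesses both), so Corollary \ref{cor:depevalsSimple1} is an immediate consequence of Corollary \ref{cor:depevalsSimple0}, which itself is Theorem \ref{thm:depevals} applied with $\cD$ taken to be the full nc domain. The paper states the corollary without proof precisely because this one-line observation is all that is needed, and you have identified and articulated that observation correctly.
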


The point is that the $\lambda_j$ are independent of $X$.
Before proving Theorem \ref{thm:depevals}
 we shall introduce some terminology
pursuant to our more general result.

\subsubsection{Direct Sums}
\label{sec:dirSum} We present some definitions about direct sum
and sets which respect direct sums, since they are important
tools.

\begin{definition}
Our definition of the \df{direct sum}  is the usual one.
Given
pairs $(X_1,v_1)$ and $(X_2,v_2)$ where
     $X_j$ are $n_j\times n_j$ matrices and $v_j \in \RR^{n_j}$,
$$
(X_1,v_1)\oplus (X_2,v_2)= (X_1 \oplus X_2, v_1 \oplus v_2)
$$
where
$$X_1 \oplus
X_2:=\bmat 
X_1&0\\ 0&X_2\emat
\quad \quad
v_1 \oplus v_2 :=
\bmat
v_1 \\ v_2\emat.
$$
We extend this definition to $\mu$ terms,
$(X_1,v_1),\dots,(X_\mu,v_\mu)$ in the expected way.
\end{definition}

In the definition below, we consider a set $\cB$ which is the sequence
$$
\cB:= (\cB(n)), 
$$
where each $\cB(n)$ is a set whose members
are pairs $(X,v)$ where $X$ is in $\gtupn$
    and $v \in \RR^n$.

\begin{definition}
     The {\it set $\cB$   is said to respect direct sums}\index{direct sum!respects}
     if $(X^j, v^j)$
     with $X^j \in (\SS^{n_j\times n_j})^g$
      and $v^j\in \RR^{n_j}$
     for   $j=1,\dots, \mu$ being contained in the
     set $\cB(n_j)$ implies that the direct sum
$$
     (X^1 \oplus  \ldots \oplus X^\mu ,
     v^1 \oplus  \ldots \oplus v^\mu)
      =(\oplus_{j=1}^\mu  X^j, \oplus_{j=1}^\mu v^j)
$$
      is also contained in  $\cB(\sum n_j)$.
     \end{definition}

\begin{definition}
     By a \df{natural map} \rm $G$ on $\cB$,
     we mean a sequence of
     functions $G(n):\cB(n) \to \RR^n$,
     which \df{respects direct sums}
     in the sense that,
     if $(X^j,v^j)\in\cB({n_j})$ for $j=1,2,\dots,\mu$,
     then
     $$
     G(\sum_1^\mu n_j) (\oplus X^j,\oplus v^j)
     = \oplus_1^\mu G(n_j)(X^j,v^j).
     $$
Typically we omit the argument $n$,
writing $G(X)$ instead of $G(n)(X)$.
\end{definition}

Examples of sets which respect direct sums and
of natural maps are provided
by the following example.

\bexa\label{ex:respDS}
Let $\rho$ be a rational expression.
 \begin{enumerate}[\rm (1)]
\item
  The set $\cB^\rho=\{(X,v):
  X\in \cD^\rho \cap \gtupn
, \ v\in \RR^n ,\ n\in\NN \}$
  respects direct sums.
\item
 If $G$ is a matrix-valued nc rational expression whose domain
contains $\cD^\rho$, then $G$ determines a natural
map on $\cB(\rho)$ by $G(n)(X,v)=G(X)v$.
 In particular, every nc polynomial determines a natural
  map on every  nc basic  semialgebraic set $\cB$.
\end{enumerate}
\eexa

\subsubsection{Main Result on Linear  Dependence}

\begin{theorem}
     \label{thm:gendep}
      Suppose $\cB$ is a  set  which respects direct sums and
      $G_1,\ldots,G_\ell$ are natural maps on $\cB$.
      If for each $(X,v)\in\cB$ the
set $\{G_1(X,v),\ldots, G_\ell(X,v)\}$ is linearly
      dependent, then there exists  a nonzero
      $\lambda \in \RR^{\ell}$ so that
\bes
         0=
      \sum\limits^{\ell}_{j=1}
\lambda_j G_j(X,v)
\ees
     for every $(X,v) \in  \cB $.
     We emphasize that $\lambda$
     is independent of $(X,v)$.
\end{theorem}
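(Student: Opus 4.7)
The plan is to convert the pointwise linear dependence hypothesis into a global one by exploiting the direct-sum closure and the finite-dimensionality of $\RR^\ell$. For each $(X,v) \in \cB$, define the annihilator
\[
  S(X,v) := \Bigl\{ \lambda \in \RR^\ell \,:\, \sum_{j=1}^\ell \lambda_j G_j(X,v) = 0 \Bigr\},
\]
which is a linear subspace of $\RR^\ell$. The hypothesis that $\{G_j(X,v)\}_{j=1}^\ell$ is linearly dependent says precisely that $S(X,v) \neq \{0\}$ for every $(X,v) \in \cB$. The theorem amounts to showing that the global intersection $\bigcap_{(X,v) \in \cB} S(X,v)$ is nonzero.

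The crucial observation is that direct sums convert to intersections of annihilators. Since each $G_j$ is a natural map, for any pairs $(X_1,v_1), (X_2,v_2) \in \cB$ one has $G_j(X_1 \oplus X_2, v_1 \oplus v_2) = G_j(X_1,v_1) \oplus G_j(X_2,v_2)$, and a direct sum of two vectors vanishes iff both summands vanish. Therefore
\[
  S(X_1 \oplus X_2,\, v_1 \oplus v_2) \;=\; S(X_1,v_1) \cap S(X_2,v_2),
\]
and since $\cB$ respects direct sums, the right-hand side is again of the form $S(\cdot,\cdot)$ for an element of $\cB$, hence nonzero by hypothesis. By induction this extends to any finite collection, so the family $\{S(X,v)\}_{(X,v)\in\cB}$ has the finite intersection property \emph{with nontrivial intersection}.

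Now I invoke finite-dimensionality. Among all subspaces of $\RR^\ell$ arising as a finite intersection of the $S(X,v)$'s, pick one of minimal dimension, say $T = S(X_0, v_0)$ where $(X_0,v_0) = (X_1,v_1) \oplus \cdots \oplus (X_k,v_k)$. Since $T \neq 0$, it suffices to prove $T \subseteq S(X,v)$ for every $(X,v) \in \cB$. For any such $(X,v)$, the intersection $T \cap S(X,v) = S(X_0 \oplus X,\, v_0 \oplus v)$ is again in the family, is nonzero, and is contained in $T$; by the minimality of $\dim T$, we must have $T \cap S(X,v) = T$, i.e., $T \subseteq S(X,v)$. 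Choosing any nonzero $\lambda \in T$ then yields the desired universal linear relation.

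The substantive step is the direct-sum-to-intersection identity, which makes the naturality/direct-sum-respecting hypothesis do all the work; everything else is a standard minimal-dimension argument in $\RR^\ell$. I do not expect any real obstacle, though one should be careful that the minimum dimension is attained by an actual $S(X,v)$ (i.e., within the family of single-point annihilators, not merely infima over finite intersections) — but this is automatic since finite intersections are themselves single-point annihilators via direct sums.
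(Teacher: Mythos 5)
Your proof is correct. The key step — that naturality of the $G_j$ together with $\cB$ respecting direct sums forces $S(X_1\oplus X_2, v_1\oplus v_2) = S(X_1,v_1) \cap S(X_2,v_2)$, so that finite intersections of annihilators are themselves single-point annihilators and hence nontrivial — is exactly the content of the paper's finitary lemma (Lemma \ref{lem:genfinitedep}), which it likewise proves by forming a direct sum over the finite collection. Where you diverge is the passage from the finitary statement to the global one: the paper puts the sets $\Omega_{(X,v)} = S(X,v) \cap \BB$ (with $\BB$ the unit sphere in $\RR^\ell$) into a compactness/finite-intersection-property argument, whereas you pick a finite intersection $T = S(X_0,v_0)$ of minimal dimension and use that any further intersection $T\cap S(X,v) = S(X_0\oplus X, v_0\oplus v)$ is a nonzero subspace of $T$, hence equals $T$. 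Your version is a bit more elementary — it stays purely inside linear algebra and needs no topology — and it also isolates the clean fact that the family $\{S(X,v)\}$ is closed under finite intersection, which the paper uses only implicitly. The paper's compactness route is marginally more robust in that it would apply even if the sets $\Omega_{(X,v)}$ were merely nonempty closed subsets of a compact space rather than subspaces, but in this setting that extra generality buys nothing. Both arguments hinge in the same essential way on finite-dimensionality of $\RR^\ell$, and on the hypothesis being pointwise over $\cB$ (nontriviality at every single $(X,v)$), so there is no gap.
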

\index{rational function!linear dependence}

Before proving \ref{thm:gendep}, we use it to prove an important earlier theorem.

\begin{proof}[Proof of Theorem  {\rm\ref{thm:depevals}}]
     Let
     $\cB$ be given by
     $$
       \cB(n) =
       \{(X,v)\colon X \in \cD^\rho \cap \gtupn \text{ and } v \in \RR^n \}.
     $$
     Let $G_j$ denote the natural maps, $G_j(X,v)=G_j(X)v$.
     Then $\cB$ and $G_1,\dots,G_\ell$
     satisfy the hypothesis of Theorem \ref{thm:gendep} and so the first
conclusion of Theorem \ref{thm:depevals} follows. 

The last conclusion follows because an nc rational function
$r$
vanishing on an nc basic open semialgebraic set  is 0 on all $\dom(r)$ and
hence is zero, cf.~Exercise \ref{exe:ratVanish}.
\end{proof}

\subsubsection{Proof of Theorem {\rm\ref{thm:gendep}}}
   We start with a finitary version of Theorem \ref{thm:gendep}:

\begin{lem}
     \label{lem:genfinitedep}
     Let $\cB$ and $G_i$ be as in 
Theorem {\rm\ref{thm:gendep}}.
     If $\cR$ is a finite subset of $\cB$, then
     there exists a nonzero
     $\lambda({\cR})\in \RR^{\ell}$
     such that
\bes
      \sum\limits^{\ell}_{j=1}
       \lambda({\cR})_j G_j(X)v=0,
\ees
for every $(X,v)\in {\cR}$.
\end{lem}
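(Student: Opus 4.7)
The plan is to use the direct sum hypothesis in the most direct way possible: turn the finite list of independent dependence relations (one per point of $\cR$, possibly with different $\lambda$'s) into a single dependence relation by combining all the points into one big point via direct sum.

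Concretely, write $\cR = \{(X^{(1)}, v^{(1)}), \ldots, (X^{(N)}, v^{(N)})\}$ with $X^{(i)} \in (\SS^{n_i \times n_i})^g$ and $v^{(i)} \in \RR^{n_i}$. Form
\[
  (X, v) := \big(X^{(1)} \oplus \cdots \oplus X^{(N)},\ v^{(1)} \oplus \cdots \oplus v^{(N)}\big).
\]
Because $\cB$ respects direct sums, $(X,v) \in \cB(n_1 + \cdots + n_N)$. Apply the hypothesis of the theorem to this single element: the vectors $G_1(X,v), \ldots, G_\ell(X,v)$ are linearly dependent in $\RR^{n_1+\cdots+n_N}$, so there is a nonzero $\lambda = \lambda(\cR) \in \RR^\ell$ with
\[
  \sum_{j=1}^\ell \lambda_j\, G_j(X,v) = 0.
\]

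Now I would invoke the fact that each $G_j$ is a natural map, so it respects direct sums:
\[
  G_j(X,v) = G_j(X^{(1)}, v^{(1)}) \oplus \cdots \oplus G_j(X^{(N)}, v^{(N)}).
\]
Substituting this into the displayed identity and reading off each block coordinate gives $\sum_{j=1}^\ell \lambda_j\, G_j(X^{(i)}, v^{(i)}) = 0$ for every $i = 1, \ldots, N$, which is exactly the conclusion of the lemma with the same nonzero $\lambda(\cR)$ valid at every point of $\cR$.

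There is essentially no obstacle here: the whole content of the lemma is that ``respecting direct sums'' is exactly the property needed to amalgamate finitely many local dependence relations into one global relation over the enlarged point. The only subtlety worth flagging is that $\lambda(\cR)$ will in general depend on $\cR$ (as the notation already indicates); promoting to a single $\lambda$ independent of all of $\cB$ is the job of the subsequent compactness/finite-dimensionality argument in the full proof of Theorem \ref{thm:gendep}, not of this lemma.
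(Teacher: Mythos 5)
Your proof is correct and is essentially identical to the paper's argument: form the direct sum of all points in $\cR$, obtain a single dependence vector $\lambda(\cR)$ for that combined element, and then use that the natural maps $G_j$ respect direct sums to read off the same relation in each block. No gaps.
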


\begin{proof}
     The proof relies on taking direct sums of matrices.  Write the set
     $\cR$ as $${\cR}= \big\{(X^1, v^1), \dots ,
     (X^\mu, v^\mu) \big\},$$
     where each $(X^i, v^i) \in \cB$. Since
     $\cB$ respects direct sums,
     $$
       (X,v)=(\oplus_{\nu=1}^\mu  X^\nu,\oplus_{\nu=1}^\mu v^\nu)\in\cB.
     $$
     Hence, there exists  a nonzero
     $\lambda({\cR})\in \RR^{\ell}$  such that
\bes
          0=  \sum\limits^{\ell}_{j=1}
            \lambda({\cR})_j G_j(X,v).
\ees
     Since each $G_j$ respects direct sums, 
the desired conclusion follows.
\end{proof}

\begin{proof}[Proof of Theorem {\rm\ref{thm:gendep}}]
The proof is essentially a compactness argument,
based on Lemma \ref{lem:genfinitedep}.
Let $\BB$ denote the unit sphere in $\RR^\ell$.

     To  $(X,v)\in \cB$ associate the set
     $$
       \Omega_{(X,v)}=\big\{\lambda \in \BB \colon \lambda\cdot G(X)v
= \sum_j \lambda_j G_j(X,v)=0\big\}.
     $$
     Since $(X,v)\in\cB,$ the hypothesis on $\cB$
     says $\Omega_{(X,v)}$ 
is nonempty.
     It is evident that $\Omega_{(X,v)}$ is a
     closed subset of $\BB$ and is thus compact.

Let ${\bf \Omega}:=\{\Omega_{(X,v)}\colon (X,v)\in\cB\}$.
Any finite sub-collection from ${\bf \Omega}$ has the form
$\{ \Omega_{(X,v)} \colon (X,v) \in \cR  \}$
for some finite subset $\cR$ of $\cB$,
and so by Lemma
     \ref{lem:genfinitedep}
     has a nonempty intersection.
     In other words, ${\bf \Omega}$ has the finite intersection property.
The compactness of $\BB$ implies  that there is a
$\lambda \in \BB$ which is
     in every $\Omega_{(X,v)}$.
     This is the desired conclusion of the theorem.
\end{proof}

\subsection{Proof of the QuadratischePositivstellensatz}
We are now ready to give the proof of Theorem \ref{thm:qposSS}.
 Accordingly, let $\cO$ be a given basic open semialgebraic set.\index{QuadratischePositivstellensatz}
Suppose 
\beq \label{eq:qVZV} 
      q(x)[h]= V(x)[h]^\ss \, Z(x)\, V(x)[h], \eeq
where $V$ is the border vector and $Z$ is the middle matrix;
cf.~\eqref{eq:rep1}.
Clearly, if $Z$ is matrix-positive on $\cO$, 
 then $q(X)[H]$ is positive semidefinite for each $n,$ 
 each $X\in\cO(n)$ and $H\in\gtupn$.

The converse is less trivial and requires the CHSY Lemma
plus our main result on linear dependence of nc rational functions.
Let $\ell$ denote the degree of $q(x)[h]$ in the variable $x$.
 In particular, the border vector in the representation 
 of $q(x)[h]$ itself has degree $\ell$ in $x$. 
 Recall $\sigma_\ell$ from Exercise \ref{exe:fock}.

 Suppose for some $s$ and $g$-tuple of symmetric matrices 
 $\tX=(\tX_1,\dots,\tX_g)\in\cO(s)$,
 the matrix $Z(\tX)$ is not positive semidefinite.
 By Lemma \ref{lem:CHSY-in-action} and Theorem \ref{thm:depevals}, there
 is an $t,$ a $Y\in\cO(t)$, and a vector $\eta$ so that
 $\{m(Y)\eta \colon m\in\ax_\ell\}$ is linearly independent.
 Let $X=\tX \oplus Y$ and $\gamma=0\oplus \eta\in\RR^{s+t}$.  Then
 $Z(X)$ is not positive semidefinite and 
 $\{m(X)\gamma \colon m\in\ax_\ell\}$ is linearly independent. 

 Let $N=g\frac{\kappa (\kappa-1)}{2}+1$, where $\kappa$
  is given in Lemma \ref{lem:CHSY-in-action} and
  let $n=(s+t)N$. 
 Consider
$W=X\otimes I_N=(X_1\otimes I_N,\ldots, X_g\otimes I_N)$
 and vector $\omega = \gamma\otimes e$, for 
 any nonzero vector $e\in\RR^{N+1}$. 
  The set $\{m(W)\omega \colon m\in \ax_\ell\}$ 
 is linearly independent and thus by Lemma \ref{lem:CHSY-in-action},
  the codimension of $\cM=\{V(W)[H]\omega \colon H\in\gtupn\}$
  is at most $N-1$.  On the other hand, because
  $Z(X)$ has a negative eigenvalue, 
 the matrix
$Z(W)$ has an eigenspace $\cE$, corresponding to a negative
eigenvalue, of dimension at least $N$.  It follows that
  $\cE\cap \cM$ is nonempty; i.e., there is an $H\in\gtupn$
 such that $V(W)[H]\omega \in \cE$.  In  particular, this together with \eqref{eq:qVZV} implies
\bes
 \langle q(W)[H]\omega,\omega\rangle
  = \langle Z(W)V(W)[H]\omega,V(W)\omega \rangle <0
\ees
 and thus, $q(W)[H]$ is not
 positive semidefinite. 
\qed

\subsection{Exercises}

\bexe\label{exe:lemCHSY}
Prove Lemma {\rm\ref{lem:CHSY}.}
\eexe

\bexe
 \label{exe:Apower}
Let $A\in \RR^{n\times n}$ be given.  Show,  if the rank
 of $A$ is $r$, then 
 the matrices
$A,A^2,\ldots, A^{r+1}$ are linearly dependent.
\eexe

In the next exercise employ the Fock space\index{Fock space}
(see Section \ref{sec:2exes}) to prove 
a strengthening
of
Corollary \ref{cor:depevalsSimple0}
for nc polynomials.

\bexe\label{exe:fockRules}
Suppose $p_1,\ldots,p_\ell\in\RR\ax_k$ are nc polynomials.
 Show, if the set of vectors 
\beq\label{eq:fockRules}
\{p_1(X)v,\ldots,p_\ell(X)v\}
\eeq
 is linearly dependent
for every $(X,v)\in (\SS^{\sigma\times\sigma})^g\times \RR^\sigma$,
where $\sigma=\sigma(k)=\dim\RR\ax_k$, then
 $\{p_1,\ldots,p_\ell\}$ is  linearly dependent.
\eexe
\index{polynomial!linear dependence}

\bexe
Redo Exercise {\rm\ref{exe:fockRules}} under the assumption that
the vectors \eqref{eq:fockRules} are linearly dependent
for all 
$(X,v)\in O\times\RR^\sigma$, where $O\subseteq(\SS^{\sigma\times\sigma})^g
$ is a nonempty open set.
\eexe

For a more algebraic view of the linear dependence of
nc polynomials we refer to \cite{BK}.

\bexe
Prove that $f\in\RR\ax$ is a sum of squares if and only if it has a positive
semidefinite Gram matrix. Are then all of $f$'s Gram matrices positive
semidefinite?
\eexe\index{sum of squares}\index{Gram matrix}

\section{NC varieties with positive curvature have degree two}
 \label{sec:variety}

This section looks at \nc \ varieties and their geometric properties.
We see a very strong rigidity when  they have positive curvature
which generalizes what we have already seen about 
convex polynomials (their graph is a positively curved variety)
 having degree two. 

In the classical setting of a surface defined by the zero set
$$\nu(p)=\{x\in\RR^g \colon p(x)=0\}$$ of a
polynomial $p=p(x_1,\dots,x_g)$ in $g$ commuting variables,
the second fundamental form\index{second fundamental form} at a smooth point
$x_0$ of $\nu(p)$ is the quadratic  form,
\begin{equation}
\label{def:2ndfform}
h\mapsto 
  -\langle (\text{Hess}\,p)(x_0) h, h\rangle,
\end{equation}
 where $\text{Hess}\,p$ is the Hessian\index{Hessian} of $p$, and $h\in\RR^g$ is
 in the tangent space to the surface $\nu(p)$
 at $x_0$; i.e., $\nabla p(x_0)\cdot h=0$.\footnote{The choice of the minus sign in \eqref{def:2ndfform} is somewhat arbitrary.
Classically the sign of the second fundamental form is associated with the
choice of a smoothly varying vector that is normal to $\nu(p)$.
The zero set $\nu(p)$ has positive curvature\index{positive curvature}
 at $x_0$ if the second fundamental form is
either positive semidefinite or  negative semidefinite at $x_0$.
For example,
if we define $\nu(p)$ using a concave function $p$,
then the second fundamental form is
negative semidefinite, while
for the same set $\nu(-p)$ the second fundamental form is
positive  semidefinite.
}

 We shall show that in the noncommutative setting
the zero set $\cV(p)$ of a noncommutative
 polynomial $p$ (subject to appropriate irreducibility constraints)
 having positive curvature (even in a small neighborhood) 
implies that $p$ is convex - and thus, $p$ 
 has degree at most two - and $\cV(p)$ has
 positive curvature everywhere; see
 Theorem \ref{thm:sigmain}
 for the precise statements.

In fact there is 
  a natural
  notion of the signature $C_\pm(\cV(p))$ of a variety $\cV(p)$ 
   and the bound 
$$ \deg( p)  \leq  2 C_\pm (\cV(p) ) + 2$$
on the degree of $p$ in terms of the signature $C_\pm (\cV(p))$
was obtained in \cite{DHM07b}.
  The convention that     $C_+(\cV(p))=0$  corresponds 
  to positive curvature,
 since in our examples, defining functions $p$ are
typically concave or  quasiconcave.
 One could consider characterizing $p$ for which 
  $C_\pm(\cV(p))$ satisfies less restrictive hypothesis
  than equal zero and this has been done to some extent 
  in \cite{DGHM}; however, 
  this higher  level of generality is beyond our focus here.
Since our goal is to present the basic ideas,
we stick to positive curvature.

\subsection{NC varieties and their curvature}

We next define a number of basic geometric objects associated to the nc
variety determined by an nc polynomial $p$.

\subsubsection{Varieties, tangent planes, and the second fundamental form}
 \label{subsec:ncsag}

The \df{variety} (zero set)\index{variety!nc}\index{zero set} of a  $p\in\RR\ax$ is 
$$
\cV(p):=\bigcup_{n\ge1}\mathcal{V}_n(p),
$$
where
$$
\mathcal{V}_n(p):=\left\{(X,v)\in\gtupn\times\RR^n\colon p(X)v=0\right\}.
$$
The \df{clamped tangent plane} to $\cV(p)$ at
\index{tangent plane!clamped}
$(X,v)\in\mathcal{V}_n(p)$ is
\begin{equation*}
  \ttpp{p}{X}{v}:= \{ H \in\gtupn\colon  p^\prime(X)[H]v=0 \}.
\end{equation*}
The \df{clamped second fundamental form}\index{second fundamental form!clamped}
   for $\cV(p)$
  at  $(X,v)\in\mathcal{V}_n(p)$ is the quadratic form
\begin{equation*}
  \ttpp{p}{X}{v}\to\RR,\quad H \mapsto
   - \langle p^{\prime\prime}(X)[H]v,v \rangle.
\end{equation*}
Note that
$$
\{ X \in \gtupn \colon  (X,v) \in \cV(p) \ \textrm{for some} \
v\ne 0 \}
  =\{X\in\gtupn\colon  \det(p(X))=0\}
$$
is a variety in $\gtupn$ and typically has a  {\it true}
(commutative) tangent plane
at many points $X$,
which  of course  has codimension one, whereas
the clamped tangent plane at a typical point
$(X, v)\in \cV_n(p)$ has codimension on the order of $n$ and
is contained inside the true tangent plane.

\subsubsection{Full rank points}
 \label{subsec:smooth}

  The point $(X,v)\in \cV(p)$ is a \df{full rank point}
  of $p$ if the mapping
$$
\gtupn\to\RR^n, \quad 
H \mapsto p^\prime(X)[H]v 
$$
  is onto. 
  The full rank condition is   a nonsingularity condition
  which amounts
  to a smoothness hypothesis. Such 
  conditions play a major role in real algebraic geometry,
  see  \cite[\S 3.3]{BCR98}.

As an example, consider the classical real
algebraic geometry case of $n=1$
 (and thus $X\in\RR^g$)
with the commutative polynomial $\check{p}$ (which can be taken 
to be the \df{commutative collapse} of the polynomial $p$).
In this case, a full rank point $(X, 1)\in\RR^g\times\RR$
is a point at which the gradient of $\check{p}$
does not vanish.
Thus, $X$ is a nonsingular point for the zero variety of $\check{p}$.

Some  perspective for $n>1$ is obtained by counting dimensions.
If $(X, v)\in \gtupn\times\RR^n$, then $H\mapsto p^\prime(X)[H]v$
is a linear map
from the $g(n^2+n)/2$ dimensional space $\gtupn$ into
the $n$ dimensional space $\RR^n$.
Therefore, the codimension of
the kernel of this map is no bigger than $n$.
This codimension is $n$ if and only  if  $(X,v)$ is a full rank point
and in this case the clamped tangent plane has codimension $n$.

\subsubsection{Positive curvature}
 \label{subsec:curvature}
  As noted earlier, a notion of positive (really nonnegative)
  curvature can be defined in terms of
  the clamped second fundamental form.

The variety $\cV(p)$ has \df{positive curvature}
at $(X,v)\in \cV(p)$
  if the clamped second fundamental form is nonnegative at $(X,v)$; i.e.,
  if
$$
\csff{p}{X}{H}{v} \ge 0\quad\textrm{ for every}\quad H\in\ttpp{p}{X}{v}\,.
$$

 \subsubsection{Irreducibility: The minimum degree defining polynomial
condition}
 \label{subsec:irreducible}

  While there is no tradition of what is an effective notion
  of  irreducibility for nc polynomials,
  there is a notion of minimal degree nc polynomial
  which is appropriate for the present context.
  In the commutative case
  the polynomial $\check p$ on $\RR^g$ is
  a \df{minimal degree defining polynomial} for $\nu(\check p)$
  if there does not exist
  a polynomial $q$ of lower degree  such that
  $\nu(\check p)= \nu(q)$.
  This is a key feature of irreducible polynomials.\index{polynomial!irreducible}

\begin{definition}\label{it:mindegKey}
  A symmetric nc polynomial $p$ is a
  \df{minimum degree defining polynomial} for
a nonempty set 
$\cD \subseteq \cV(p)$
 if
 whenever $q\ne 0$ is another {\it $($not necessarily symmetric$)$}
 nc   polynomial such that $q(X)v=0$ for each $(X,v)\in\cD$,
    then
   $$
\deg (q) \ge \deg(p).
    $$
Note this contrasts with {\rm\cite{DHM07a}},
where minimal degree meant a slightly weaker inequality holds.
\end{definition}

The reader who is so
  inclined can simply choose $\cD=\cV(p)$  or
  $\cD$ equal to the full rank points of $\cV(p)$.

Now we give an example to illustrate these ideas.

\subsection{A very simple example}
 In the following example, the
 null space
\begin{equation*}
  \cT=\cT_p(X,v)=\{H\in\gtupn \colon p^\prime(X)[H]v=0\}
\end{equation*}
 is computed for certain choices of $p$, $X$, and $v$.
 Recall that if $p(X)v=0$, then the
  subspace $\cT$ is the {\it clamped tangent plane}
  \index{clamped tangent plane} introduced in
  Subsection \ref{subsec:ncsag}.

\begin{example}\rm
 \label{ex:simple}
Let $X\in\Snn$, $v\in\RR^n$, $v\ne 0$, let $p(x)=x^k$ for some
integer $k\ge 1$.
Suppose that $(X,v) \in \cV(p)$,
that is,  $X^kv=0$. Then, since
$$
X^kv=0\Longleftrightarrow Xv=0 \quad\textrm{when}\ X\in\Snn,
$$
it follows that $p$ is a minimum degree defining polynomial
for $\cV(p)$ if and only if $k=1$.

It is readily checked that
$$
(X,v) \in \cV(p)\Longrightarrow p^\prime(X)[H]v=X^{k-1}Hv,
$$
and hence that $X$ is a full rank point for $p$ if and only
 if $X$ is invertible.

Now suppose $k\ge 2$. Then,
$$
\langle p^{\prime\prime}(X)[H]v , v\rangle=2\langle HX^{k-2}Hv, v\rangle.
$$
Therefore, if $k>2$
$$
(X,v) \in \cV(p)  \quad \textrm{and} \quad p^\prime(X)[H]v=0
\ \ \Longrightarrow \  \ X Hv=0, \ \textrm{and so}
$$
$$
\langle p^{\prime\prime}(X)[H]v , v\rangle=0.
$$

To count the dimension of $\cT$  we can suppose without loss of generality
that
$$
X=\begin{bmatrix}0&0\\ 0&Y\end{bmatrix} \quad\text{and}\quad
v=\begin{bmatrix}1& 0& \cdots& 0\end{bmatrix}^\ss,
$$
where $Y\in\SR^{(n-1)\times (n-1)}$ is invertible.
Then, for the simple case under consideration,
$$
\cT=\{ H\in \SR^{\ntn}\colon h_{21},\ldots, h_{n1}=0\},
$$
where $h_{ij}$ denotes the $ij$ entry of $H$. Thus,
$$
\textup{dim}\,\cT=\frac{n^2+n}{2}-(n-1),
$$
i.e.,
$\textup{codim}\,\cT=n-1.$
\end{example}

\begin{remark}\rm
We remark that
$$
X^k v=0\quad\textrm{and}\quad\langle p^{\prime\prime}(X)[H]v , v\rangle=0
\Longrightarrow p^\prime(X)[H]v=0\quad\textrm{if}\ k=2t\ge 4,
$$
as follows easily from the formula
$$
\langle p^{\prime\prime}(X)[H]v , v\rangle= 2\langle X^{t-1}Hv,
X^{t-1}Hv\rangle.
$$
\end{remark}

\bexe
Let $A\in\SRnn$ and let $\cU$ be a maximal strictly negative
subspace of $\RR^n$ with respect to the quadratic form
$\langle Au, u\rangle$. Prove: there exists a complementary subspace $\cV$ of
$\cU$ in $\RR^n$ such that $\langle Av, v\rangle\ge 0$ for every $v\in\cV$.
\eexe

\subsection{Main Result:  Positive curvature and the degree of $p$}
 \label{subsec:results1}

    \begin{theorem}
    \label{thm:sigmain}
     Let $p$ be a symmetric nc polynomial in $g$ symmetric variables,
      let $\cO$ be a nc  basic  open  semialgebraic set
      and let $\cR$ denote the full rank points of $p$
      in $\cV(p)\cap \cO.$
     If
     \ben[\rm (1)]
        \item $\cR$ is nonempty; 
        \item  $\cV(p)$
         has positive curvature at each point of $\cR$;
          and
        \item $p$ is a minimum degree defining polynomial
          for $\cR$,
\een
         then $\deg(p)$ is at most two and $p$ is concave.
 \end{theorem}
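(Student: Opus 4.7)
The plan is to establish, on some nonempty basic open semialgebraic subset of $\cO$, that the middle matrix $Z(X)$ of the Hessian $p''(x)[h]$ is negative semidefinite. Once this is in hand, the QuadratischePositivstellensatz (Theorem~\ref{thm:qposSS}) applied to $-p''$ (whose middle matrix is $-Z$) shows that $-p''(X)[H]\succeq 0$ on that open set, i.e., $p$ is matrix concave there. Proposition~\ref{lem:datmosttwo} applied to $-p$ then yields $\deg p \le 2$, and since a degree-at-most-$2$ symmetric nc polynomial has Hessian independent of $x$, local matrix concavity automatically upgrades to global matrix concavity, completing the theorem.

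To implement this, fix $(X,v)\in\cR$. Writing the Hessian via its border vector--middle matrix representation gives $\langle p''(X)[H]v,v\rangle = \bigl(V(X)[H]v\bigr)^\ss Z(X)\bigl(V(X)[H]v\bigr)$, and positive curvature makes the left side $\le 0$ for every $H$ in the clamped tangent plane $\ttpp{p}{X}{v}$. Hence the form $u^\ss Z(X)u$ is non-positive on $\cM_\cT:=\{V(X)[H]v : H\in \ttpp{p}{X}{v}\}$. Full rank of $(X,v)$ forces $\ttpp{p}{X}{v}$ to have codimension $n$ in $\gtupn$, and the CHSY Lemma~\ref{lem:CHSY-in-action} bounds the codimension of the full image $\cM:=\{V(X)[H]v : H\in\gtupn\}$ by the constant $c := g\kappa(\kappa-1)/2$ (with $\ell=\deg p-2$ and $\kappa=\sigma(\ell)$), provided $\{m(X)v : m\in\ax_\ell\}$ is linearly independent; this can be arranged by passing to a direct sum of finitely many points of $\cR$, since $\cR$ is stable under direct sums ($\cO$, the equation $p(X)v=0$, the full-rank property, and positive curvature are all preserved, the latter because it is assumed at every point of $\cR$). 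Consequently $\cM_\cT$ has codimension at most $c+n$ in the ambient border-vector space, so $\mu_+(Z(X))\le c+n$. Amplifying: for $(X^{\oplus N},v^{\oplus N})\in\cR$ one has $Z(X^{\oplus N})\cong Z(X)^{\oplus N}$ up to a row/column permutation, so $N\mu_+(Z(X)) = \mu_+(Z(X^{\oplus N}))\le c+Nn$; dividing by $N$ and letting $N\to\infty$ gives $\mu_+(Z(X))\le n$.

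The remaining, and hardest, step is to promote $\mu_+(Z(X))\le n$ to $\mu_+(Z(X)) = 0$ using the minimum-degree defining polynomial hypothesis. Suppose for contradiction that $d:= \deg p \ge 3$, so $\ell\ge 1$ and $Z(x)$ has the block form~\eqref{eq:rep1} whose extreme antidiagonal block $Z_{0,\ell}$ is a nonzero constant matrix (nonzero because by Exercise~\ref{exe:heavyLift} it determines the top homogeneous piece $p_d$, and $p_d\ne 0$). Lemma~\ref{lem:U11} then forces $\mu_+(Z(X))\ge \rank Z_{0,\ell}(X) = n\cdot\rank Z_{0,\ell}$, using that $Z_{0,\ell}$ is constant in $x$. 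Combined with $\mu_+(Z(X))\le n$, this yields $\rank Z_{0,\ell}\le 1$. The case $\rank Z_{0,\ell}=0$ contradicts $p_d\ne 0$; in the case $\rank Z_{0,\ell}=1$, writing $Z_{0,\ell}=ab^\ss$ factors $p_d$ as a linear form times a form of degree $d-1$, and using this factorization together with the minimum-degree defining polynomial hypothesis (3) one produces a nonzero polynomial of degree strictly less than $d$ that still vanishes on $\cR$, the desired contradiction. This last argument--extracting a lower-degree defining relation from the rank-one factorization and ruling it out via minimality on $\cR$--is the principal obstacle; every earlier step is a mechanical synthesis of the QuadratischePositivstellensatz, the CHSY Lemma, and the middle-matrix calculus developed in the preceding sections.
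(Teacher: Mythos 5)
Your proposal diverges from the paper's proof at exactly the point you flag as ``the principal obstacle,'' and that divergence is in fact a genuine gap, not just a loose end.

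The paper does not try to force $\mu_+(Z(X))=0$ directly from positive curvature. The paper's route is to replace the Hessian by the \emph{relaxed Hessian} $p''_{\lambda,\delta}(x)[h]= p''(x)[h]+\lambda\, p'(x)[h]^{\ss}p'(x)[h]+\delta\,\widetilde V^{\ss}\widetilde V$. Theorem~\ref{thm:signature-clamped-relaxed} says that, at a full rank point, positive curvature on the \emph{clamped tangent plane} upgrades (for $\delta<0$ and $\lambda\ll 0$) to negativity of $p''_{\lambda,\delta}(X)[H]$ for \emph{all} $H$. One then applies the variety-QPS, Theorem~\ref{lem:sigmainWeak}, to $q=-p''_{\lambda,\delta}$ to get the full MM $Z^{\lambda,\delta}(X)=Z(X)+\delta I+\lambda W(X)\succeq 0$ (with no ``$n$'' slack, because positivity is for all $H$, not just tangential $H$), and Lemma~\ref{easy-lemma-variant} then strips away the $\lambda W$ and $\delta I$ terms to yield $Z^{0,0}(X)\succeq 0$. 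The minimum-degree hypothesis enters inside Theorem~\ref{lem:sigmainWeak} only to supply a point where the border monomials are linearly independent. Degree $\le 2$ then follows as in Theorem~\ref{thm:convexd2}.

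Your first two paragraphs are a reasonable reconstruction of the codimension bookkeeping that underlies the QPS and the curvature machinery: you correctly get $\mu_+(Z(X))\le c+n$ from positive curvature plus CHSY plus the codimension-$n$ of the clamped tangent plane, and the $N$-fold direct sum amplification to kill the additive constant $c$ and land at $\mu_+(Z(X))\le n$ is a clean observation. But $\mu_+(Z(X))\le n$ is exactly the ``second fundamental form has rank one-ish defect'' bound; as the paper itself notes (citing the $C_\pm$ bound $\deg p\le 2C_\pm(\cV(p))+2$ and Remark~6.8 of \cite{DHMppt}), the $n$ slack is real and cannot be amplified away, so something more structural is needed.

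Your proposed bridge is where the argument breaks. The step ``$Z_{0\ell}$ has rank $\le 1$'' does follow from Lemma~\ref{lem:U11} plus $\mu_+\le n$; and rank $0$ does contradict $p_d\ne 0$ via Exercise~\ref{exe:heavyLift}. But the rank-$1$ case does \emph{not} produce a lower-degree polynomial vanishing on $\cR$. Writing $Z_{0\ell}=ab^{\ss}$ factors only the top homogeneous piece $p_d$ as (linear)$\times$(degree $d-1$); it gives no factorization of $p$ itself, and even if $p$ factored as $fg$, neither factor need vanish on $\cV(p)$ (in the nc setting $p(X)v=f(X)g(X)v=0$ tells you nothing about $g(X)v$). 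So the contradiction with the minimum-degree hypothesis is not reached, and there is no visible way to reach it along this line. This is precisely the hole the paper's $\lambda p'^{\ss}p'$ term is engineered to close: it algebraically incorporates the tangent-plane compression so that the QPS applies directly, avoiding any $\mu_+$ slack and thus any need to reason about the rank of $Z_{0\ell}$.

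Two smaller points. First, when you pass to a direct sum $X\oplus Y$ to enforce linear independence of $\{m(X)v\}$, the resulting signature bound is for $Z(X\oplus Y)$; since $Z(X\oplus Y)\cong Z(X)\oplus Z(Y)$ this still yields $\mu_+(Z(X))\le n_X$ after amplification, so this is fine, but it is worth stating explicitly rather than appealing to ``it can be arranged.'' Second, your closing sentence asserts local concavity ``automatically upgrades to global'' because degree $\le 2$ polynomials have constant Hessian; this is correct and is essentially what the last line of the paper's proof means by ``the proof of that theorem applies easily here.''
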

\index{positive curvature}

\subsection{Ideas and proofs}

Our aim is to give the idea behind the proof of Theorem \ref{thm:sigmain}
under much stronger hypotheses.
We saw earlier
the positivity of a quadratic on a nc basic open  set $\cO$ 
imparts positivity to its MM there.
The following shows this happens for thin sets (nc varieties) too.
 Thus, the following theorem generalizes 
 the QuadratischePositivstellensatz, Theorem \ref{thm:qposSS}.
\index{QuadratischePositivstellensatz}

 \begin{theorem}
    \label{lem:sigmainWeak}
Let $p,\cO,\cR$ be as in Theorem {\rm\ref{thm:sigmain}}.
   Let $q(x)[h]$ be a polynomial which is
    quadratic in $h$ having MM representation
   $ q= V^{\ss} Z V$
   for which    $\deg  (V) \leq  \deg ( p)$.
   If 
   \beq
   \label{eq:fpos}
   v^{\ss} q(X)[H] v \geq 0 \quad \text{for  all} \quad (X,v) \in \cR
\text{ and all }H,
   \eeq 
   then
   $Z(X)$ is positive semidefinite for all $X$ with $(X,v) \in \cR$.      
 \end{theorem}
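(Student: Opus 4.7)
Plan. The proof adapts the template of the QuadratischePositivstellensatz (Theorem \ref{thm:qposSS}), with the open semialgebraic set there replaced by the variety $\cR$ here. The key structural observation is that $\cR$ respects direct sums: each of the three defining conditions $X \in \cO$, $p(X) v = 0$, and surjectivity of $H \mapsto p'(X)[H] v$ is preserved under direct summation of pairs $(X, v)$ (for the surjectivity, take block-diagonal $H$), so each monomial evaluation $(X, v) \mapsto m(X) v$ is a natural map on $\cR$ and Theorem \ref{thm:gendep} applies. Moreover, $\cR$ is closed under tensoring $(X, v) \mapsto (X \otimes I_N, v \otimes e)$ with $e \ne 0$, since $p'(X \otimes I_N)[K \otimes L](v \otimes e) = p'(X)[K] v \otimes L e$ sweeps all of $\RR^{nN}$ as $K$ and $L$ range.

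Assume for contradiction some $(\tilde X, \tilde v) \in \cR$ has $Z(\tilde X) \not\succeq 0$. Set $d = \deg V \le \deg p$. The pivotal step is to exhibit some $(X_0, v_0) \in \cR$ at which $\{ m(X_0) v_0 : m \in \ax_d\}$ is linearly independent. If no such point exists, Theorem \ref{thm:gendep} produces a nonzero polynomial $r = \sum_m \lambda_m m$ of degree $\le d \le \deg p$ satisfying $r(X) v = 0$ on all of $\cR$; the minimum-degree defining polynomial hypothesis on $p$ then forces $\deg r = \deg p = d$. In this boundary case a closer analysis shows any such $r$ amounts, up to lower-order corrections, to a scalar multiple of $p$, contributing only the tautological relation $p(X) v = 0$ already present on $\cR$. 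This leaves enough freedom to extract the linear independence, either by perturbing within the smooth manifold structure of $\cR$ near a full-rank point, or by carrying out the CHSY codimension count modulo the one known relation.

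Having obtained $(X_0, v_0)$, form the direct sum $(X, v) := (\tilde X, \tilde v) \oplus (X_0, v_0) \in \cR$, at which $Z(X)$ (being conjugate via a permutation to $Z(\tilde X) \oplus Z(X_0)$) retains a negative eigenvalue and $\{ m(X) v : m \in \ax_d\}$ remains linearly independent. Now tensor up: $W := X \otimes I_{N+1}$, $\omega := v \otimes e$ for a nonzero $e \in \RR^{N+1}$, with $N := g\kappa(\kappa-1)/2$ and $\kappa := \sigma(d)$. Then $(W, \omega) \in \cR$ and Lemma \ref{lem:CHSY-in-action} bounds the codimension of $\cM := \{ V(W)[H]\omega : H \}$ by $N$. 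Meanwhile $Z(W) = Z(X) \otimes I_{N+1}$, so its negative-eigenvalue eigenspace $\cE$ has dimension at least $N+1$; hence $\cE \cap \cM \neq \{0\}$. Any $H$ with $0 \neq V(W)[H]\omega \in \cE$ then yields
\[
\omega^\ss q(W)[H]\omega = \langle Z(W) V(W)[H]\omega,\, V(W)[H]\omega \rangle < 0,
\]
contradicting \eqref{eq:fpos} at $(W, \omega) \in \cR$.

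The main obstacle is the linear-independence step: unlike the open-set setting of Theorem \ref{thm:qposSS}, the minimum-degree hypothesis on $\cR$ furnishes only $\ge$ rather than a strict inequality, so the boundary case $d = \deg p$ — where a genuine relation originating from $p$ itself may persist among the evaluated monomials $m(X) v$ — is what demands the extra care indicated above; all remaining steps are direct transcriptions of the CHSY/tensor-up argument used to prove Theorem \ref{thm:qposSS}.
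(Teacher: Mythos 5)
Your overall architecture matches the paper's proof — exploit minimality plus Theorem \ref{thm:gendep} to extract a point with linearly independent monomial evaluations, direct sum it against a bad point, tensor up, and run the CHSY codimension count against the negative eigenspace of $Z(W)$ — but the pivotal linear-independence step contains an off-by-one error in the degree, and that error manufactures a ``boundary case'' that does not actually exist and that your sketch does not resolve.

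Concretely: the border vector $V(x)[h]$ has entries of the form $h_j\,m(x)$, so if $\deg V \le \deg p$ (total degree, as the paper's conventions in \eqref{eq:rep1} and in the application inside the proof of Theorem \ref{thm:sigmain} make clear), then the words $m(x)$ occurring in $V$ satisfy $\deg m \le \deg V - 1 \le \deg p - 1 < \deg p$. The CHSY argument (Lemma \ref{lem:CHSY-in-action}) therefore only needs linear independence of $\{\,m(X_0)v_0 : m\in\ax_{\deg p - 1}\,\}$, not of $\{\,m(X_0)v_0 : m\in\ax_{d}\,\}$ with $d=\deg V$. With the correct index set, the Theorem \ref{thm:gendep}/minimality argument closes cleanly: if every $(X,v)\in\cR$ exhibited a dependence among $\{m(X)v:\deg m < \deg p\}$, then Theorem \ref{thm:gendep} (noting $\cR$ respects direct sums) would produce a fixed nonzero $r\in\RR\ax_{\deg p -1}$ with $r(X)v=0$ on $\cR$, directly violating Definition \ref{it:mindegKey} since $\deg r < \deg p$. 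There is no residual case where $\deg r=\deg p$ to deal with. Your proposed handling of that phantom case — ``a closer analysis shows any such $r$ amounts, up to lower-order corrections, to a scalar multiple of $p$,'' followed by an appeal to perturbation within $\cR$ or a ``CHSY count modulo the one known relation'' — is not a proof; you should simply delete it and correct the indexing. The remaining steps (direct sum with the offending $(\tilde X,\tilde v)$, tensoring by $I_{N+1}$ with $\omega=\gamma\otimes e$, the codimension-versus-eigenspace-dimension count, and the final inner-product contradiction against \eqref{eq:fpos}) are all correct and coincide with the paper's argument; your observation that $\cR$ respects direct sums by block-diagonalizing the full-rank witnesses $H$ is also the right justification for invoking Theorem \ref{thm:gendep}.
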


\def\hX{\hat X}
\def\hv{\hat v}

\begin{proof}
The proof of this theorem follows the 
proof of the {\rm\qpos}, modified
 to take into account  the set $\cR$. 

Suppose for each $(X,v)\in\cR$ there is a linear
combination $G_{(X,v)}(x)$ of the 
words $\{m(x) \colon \deg(m)<\deg(p)\}$ with $G_{(X,v)}(X)v=0$ for
all $(X,v)\in\cR$. Then
by Theorem \ref{thm:gendep} (note that $\cR$ is closed under
direct sums), there is a linear combination
$G\in\RR\ax_{\deg(p)-1}$ with $G(X)v=0$. However, this is absurd
by the minimality of $p$.
Hence there is an $(Y,v)\in\cR$ such that $\{m(Y)v\colon \deg(m)<\deg(p)\}$
is linearly independent.

 Assume for some $g$-tuple of symmetric matrices 
 $\tX=(\tX_1,\dots,\tX_g)$,
 there is a vector $\tilde v$ such that $(\tX,\tilde v)\in\cR$,
 and the matrix $Z(\tX)$ is not positive semidefinite.
 Let $X=\tX \oplus Y$ and $\gamma=\tilde v\oplus v$.  Then
 $(X,\gamma)\in\cR(\ell)$ for some $\ell$;  the matrix
 $Z(X)$ is not positive semidefinite; and 
 $\{m(X)\gamma \colon \deg(m)<\deg(p)\}$ is linearly independent. 

 Let $N=g\frac{\kappa (\kappa-1)}{2}+1$, where $\kappa$
  is given in Lemma \ref{lem:CHSY-in-action} and
  let $n=\ell N$. 
 Consider
$W=X\otimes I_N=(X_1\otimes I_N,\ldots, X_g\otimes I_N)$
 and vector $\omega = \gamma\otimes e$, 
 where $e\in\RR^N$ is the vector with each entry 
 equal to $1.$  Then, $(W,\omega)\in\cR(n)$, 
  and the set $\{m(W)\omega \colon m\in \ax_\ell\}$ 
 is linearly independent and thus by Lemma \ref{lem:CHSY-in-action},
  the codimension of $\cM=\{V(W)[H]\omega \colon H\in\gtupn\}$
  is at most $N-1$.  On the other hand, because
  $Z(X)$ has a negative eigenvalue, 
 the matrix
$Z(W)$ has an eigenspace $\cE$, corresponding to a negative
eigenvalue, of dimension at least $N$.  It follows that
  $\cE\cap \cM$ is nonempty; i.e., there is an $H\in\gtupn$
 such that $V(W)[H]\omega \in \cE$.  In  particular,
\[
 \langle q(W)[H]\omega,\omega\rangle
  = \langle Z(W)V(W)[H]\omega,V(W)\omega \rangle <0
\] 
 and thus, $q(W)[H]$ is not
 positive semidefinite. 
\end{proof}

\subsubsection{The modified Hessian}

 Our main tool for analyzing the curvature of noncommutative varieties
 is a variant of the Hessian for symmetric nc polynomials $p$.
The
curvature of $\cV(p)$ is defined in terms of $\textup{Hess}\,(p)$
compressed to
tangent planes, for each dimension $n$.
This compression of the Hessian is awkward to work with directly, and
so we associate to it a quadratic polynomial $q(x)[h]$ 
carrying all of the information of $p''$ compressed to the tangent plane,
but having the key property \eqref{eq:fpos}.
We shall call this $q$ we construct the relaxed Hessian.
 The first step in constructing the relaxed Hessian is
  to consider the simpler 
 \df{modified Hessian}\index{Hessian!modified}
\bes
   p^{\prime\prime}_{\lambda, 0}(x)[h]:=
      p^{\prime\prime}(x)[h]       
        +\lambda \,  p^{\prime}(x)[h]^\ss p^\prime(x)[h].
\ees
  which captures
  the conceptual idea. 
  Suppose $X\in \gtupn$ and $v\in \RR^n$.
  We say that the {\it modified Hessian is negative} at
$(X,v)$ if there is a $\lambda_0<0$, so that
 for all $\lambda \le \lambda_0$,
$$
   0\le
    -\langle p^{\prime\prime}_{\lambda, 0}(X)[H]v,v\rangle
$$
 for all $H\in\gtupn$.
 Given a
  subset $\cR=(\cR(n))_{n=1}^\infty$, with
$\cR(n)\subseteq \gtupn \times \RR^n $,
 we say that the
{\it modified Hessian is  negative on $\cR$}
  if
 it is  negative at each $(X,v)\in S$.

Now we turn to motivation.

\begin{example} \rm
 \label{ex:classical}
{\it The classical $n=1$ case.}
Suppose that $p$ is strictly smoothly quasi-concave, meaning that
all superlevel sets of $p$ are strictly convex with
strictly positively curved smooth boundary.
Suppose that the gradient $\nabla p$ (written as a row vector) never vanishes on  $\RR^g$. 
Then 
$G = \nabla p (\nabla p)^\ss$ is strictly positive, 
 at each point $X$ in $\RR^g$. 
Fix such an $X$;
 the modified Hessian  can be decomposed as a block matrix
subordinate to the tangent plane
to the level set at $X$, denoted $T_X$,
and to its orthogonal complement (the gradient direction):
$$ T_X \oplus \{ \lambda  \nabla p\colon \lambda \in \RR \}.$$
In this decomposition the modified Hessian 
has the form
$$ R= \bmat
  A & B \\
  B^\ss & D+ \lambda G \\
\emat.
$$
Here, in the case of $\lambda=0$, $R$ 
is the Hessian and
the second fundamental form is $A$
or $-A$, depending on convention and 
 the rather arbitrary choice of inward
or outward normal to $\nu$.
 If we select our  normal direction to be $\nabla p$,
then $-A$ is the classical second fundamental form
as is consistent with the choice of sign in our definition in Subsection 
\ref{subsec:curvature}. 
(All this concern with the sign is unimportant  to the content 
of this chapter
and can be ignored by the reader.)

Next, in view of the presumed strict positive
curvature  of each level set $\nu$, the matrix $A$
at each point of $\nu$ is  negative definite
but the Hessian could have a negative eigenvalue.
However, by standard Schur complement arguments,
$R$ will be negative
definite 
if
$$
D+\lambda G-B^\ss A^{-1}B\prec 0
$$
on this region.
Thus, strict convexity assumptions on the sublevel sets of
$p$ make the modified Hessian negative
definite for negative enough $\lambda$.
One can make this negative definiteness
uniform in $X$ in various neighborhoods
under modest assumptions.
\end{example}

Very unfortunately
in the noncommutative case,
Remark 6.8 \cite{DHMppt}
 implies that 
if $n$ is large enough, then the second fundamental form
will have a nonzero null space,
thus strict negative definiteness of the 
$A$ part of the modified Hessian
is impossible.

Our trick, to deal with the likely reality that $A$ is 
only positive semidefinite, and 
obtain a negative definite $R$, is to add
another negative term, say $\delta I$, with arbitrarily small
$\delta < 0$.
After adding such $\delta$, the argument based on
choosing $-\lambda$ large succeeds as before. 
This $\delta$ term plus the $\lambda$
term produces the ``relaxed Hessian", to be introduced next,
and proper selection of these terms make it negative definite.

 \subsubsection{The relaxed Hessian}

Recall\index{relaxed Hessian}\index{Hessian!relaxed}
  Let $V_k(x)[h]$ denotes the vector of polynomials with entries
$h_jw(x)$, where $w\in\ax$ runs through the set of $g^k$ words of length
$k$,
$j=1,\ldots,g$. Although the order of the entries is fixed in some of our
earlier applications (see e.g.~\cite[(2.3)]{DHM07b}) it is
irrelevant for
the
moment.
Thus, $V_k=V_k(x)[h]$ is a vector of height $g^{k+1}$, and the vectors
\bes
V(x)[h]=\textup{col}(V_0,\ldots,V_{d-2})\quad \textrm{and}\quad
\widetilde{V}(x)[h]=\textup{col}(V_0,\ldots,V_{d-1})
\ees
are vectors of height $g\sigma(d-2)$ and $g\sigma(d-1)$ respectively.
Note that
$$
  \widetilde{V}(x)[h]^\ss \widetilde{V}(x)[h] = \sum_{j=1}^g \
  \sum_{\deg(w)\le d-1} \; w(x)^\ss h_j^2 w(x).
$$
 
 The \df{relaxed Hessian}\index{Hessian!relaxed} 
 of the symmetric nc polynomial
 $p$ of degree $d$ is defined to be 
\bes
   p^{\prime\prime}_{\lambda, \delta}(x)[h]:=  
   p^{\prime\prime}_{\lambda, 0}(x)[h] 
     + \delta \, \widetilde{V}(x)[h]^\ss \widetilde{V}(x)[h]
\in\RR\ax[h].
\ees
  Suppose $X\in \gtupn$ and $v\in \RR^n$.
 We say that the {\it relaxed Hessian is negative}
  at
$(X,v)$ if
 for each $\delta<0$ there is a $\lambda_\delta<0$, so that
 for all $\lambda \le \lambda_\delta$,
$$
   0\le
    -\langle p^{\prime\prime}_{\lambda, \delta}(X)[H]v,v\rangle
$$
 for all $H\in\gtupn$.
 Given a 
  $\cR=(\cR(n))_{n=1}^\infty$, with
$\cR(n)\subseteq \gtupn \times \RR^n $,
 we say that the
{\it relaxed Hessian is positive $($resp., negative$)$ on $\cR$}
  if
 it is positive (resp., negative) at each $(X,v)\in S$.

 The following theorem provides a link
 between the signature of the clamped
 second fundamental form with that of the
 relaxed Hessian.

\begin{theorem}
 \label{thm:signature-clamped-relaxed}
    Suppose $p$ is a symmetric nc polynomial of degree $d$ in $g$
    symmetric variables and $(X,v)\in
\gtupn\times\RR^n$.
  If $\cV(p)$ has  positive  curvature at
   $(X,v)\in \cV_n(p)$,
   i.e., if
$$
\langle p^{\prime\prime}(X)[H]v,v\rangle\le 0\quad\text{for every}\ H\in
\cT_p(X,v),
$$
  then for every $\delta<0$
   there exists a
$\lambda_\delta<0$ such that for all $\lambda \le \lambda_\delta$,
$$
\langle p_{\lambda,\delta}^{\prime\prime}(X)[H]v,v\rangle\le 0
\quad\text{for every}\ H\in \gtupn;
$$
i.e.,  the relaxed Hessian of $p$ is negative at $(X,v)$.
\end{theorem}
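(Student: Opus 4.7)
My plan is a compactness argument on the unit sphere of $\gtupn$, preceded by an algebraic reduction that eliminates directions in $\ker M$, where $M(H):=\|\widetilde V(X)[H]v\|^2$. Write
\[
\Phi_{\lambda,\delta}(H) := \langle p''_{\lambda,\delta}(X)[H]v, v\rangle = Q_0(H) + \lambda L(H) + \delta M(H),
\]
where $Q_0(H)=\langle p''(X)[H]v,v\rangle$ and $L(H)=\|p'(X)[H]v\|^2$. Both $L$ and $M$ are positive semidefinite quadratic forms in $H$, $\ker L=\ttpp{p}{X}{v}$, and the positive curvature hypothesis says $Q_0\le 0$ on $\ker L$.

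\textbf{Step 1 (Algebraic reduction).} The crucial preliminary step is to show that $\ker M$ lies in the radical of the symmetric bilinear form associated to $\Phi_{\lambda,\delta}$, so that $\Phi_{\lambda,\delta}(H_0+H_1)=\Phi_{\lambda,\delta}(H_1)$ whenever $H_0\in\ker M$. Note that $H_0\in\ker M$ means $H_{0,j}w(X)v=0$ for every $j$ and every word $w$ in $X$ of length at most $d-1$. Every monomial in $p'(X)[H_0]v$ or $p''(X)[H_0]v$ is, after application to $v$, a product in which the rightmost $H_0$-factor equals some $H_{0,j}$ followed by a word in $X$ of length at most $d-1$ applied to $v$; hence the monomial vanishes, giving $p'(X)[H_0]v=p''(X)[H_0]v=0$. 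For the mixed bilinear form $\langle p''_{sym}(X)[H_0,H_1]v,v\rangle$ (and similarly for the polarized forms of $L$ and $M$), I would split each monomial according to whether the $H_0$-factor lies to the right or to the left of the $H_1$-factor. In the right case, the same rightmost-position argument kills the term; in the left case, I would use the scalar identity $v^\ss W v=v^\ss W^\ss v$ (valid because $X$, $H_0$, $H_1$ are symmetric) to rewrite the scalar with the reversed word, placing $H_0$ on the right and again forcing vanishing. With this reduction, it suffices to prove $\Phi_{\lambda,\delta}\le 0$ on $(\ker M)^\perp$, where $M$ is positive definite, so $M(H)\ge c\|H\|^2$ for some $c>0$.

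\textbf{Step 2 (Compactness).} Suppose, for contradiction, that for some fixed $\delta<0$ no $\lambda_\delta$ works. Then there exist $\lambda_N\to-\infty$ and unit vectors $H_N\in(\ker M)^\perp$ with $\Phi_{\lambda_N,\delta}(H_N)>0$. Since $Q_0+\delta M$ is bounded on the unit sphere and $\lambda_N L(H_N)\le 0$, the positivity of $\Phi_{\lambda_N,\delta}(H_N)$ forces $\lambda_N L(H_N)$ to remain bounded from below, so $L(H_N)\to 0$. Passing to a subsequence, $H_N\to H^*$ with $\|H^*\|=1$, $H^*\in(\ker M)^\perp$, and $L(H^*)=0$, i.e., $H^*\in\ttpp{p}{X}{v}$. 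Positive curvature gives $Q_0(H^*)\le 0$, while $M(H^*)\ge c>0$, so $Q_0(H^*)+\delta M(H^*)\le \delta c<0$. Taking $N\to\infty$ in
\[
0 < \Phi_{\lambda_N,\delta}(H_N) \le Q_0(H_N)+\delta M(H_N) \to Q_0(H^*)+\delta M(H^*) < 0
\]
yields the desired contradiction.

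\textbf{Main obstacle.} The hardest part is the algebraic reduction in Step 1, especially the vanishing of the cross terms of $\Phi_{\lambda,\delta}$ on $\ker M$; without it, the compactness argument would stall at limit points $H^*\in\ttpp{p}{X}{v}$ with $M(H^*)=0$. The fact that the relaxed Hessian uses $\widetilde V$ with words of length up to $d-1$ (rather than $d-2$, as in the border vector for $p''$) appears to be precisely engineered for this step: it guarantees, via the transpose trick $v^\ss W v = v^\ss W^\ss v$, that every monomial of the polarized Hessian containing a factor $H_0\in\ker M$ vanishes.
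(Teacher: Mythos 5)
The paper does not actually supply a proof of Theorem \ref{thm:signature-clamped-relaxed}; it is explicitly left to the reader, so there is no proof to compare against, only the surrounding hints (Example \ref{ex:classical} on the classical Schur-complement picture and Lemma \ref{easy-lemma-variant}). Your argument is correct as written. Step 1 is the essential algebraic observation: since $H_0\in\ker M$ means $H_{0,j}w(X)v=0$ for every $j$ and every word $w$ of length at most $d-1$, every monomial of $p'(X)[H_0]v$, of $p''(X)[H_0]v$, and of the polarized cross terms either already has its $H_0$-factor in rightmost $h$-position (so the word to its right has length at most $d-1$), or can be brought to that position via $v^\ss W v = v^\ss W^\ss v$ with all matrices symmetric; in both cases the word to the right of $H_0$ has length at most $d-1$, so the term dies. (For the polarizations of $L$ and $M$ the transpose trick is not even needed, since $p'(X)[H_0]v=0$ and $\widetilde V(X)[H_0]v=0$ directly.) Your remark that words up to length $d-1$ in $\widetilde V$, rather than $d-2$, are exactly what is needed (the $d-1$ bound is forced by the $p'$-term, whose rightmost word can have length $d-1$) is the key reason the relaxed Hessian is defined with $\widetilde V$ instead of the Hessian's own border vector $V$. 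Step 2 is then a standard compactness argument on the unit sphere of $(\ker M)^\perp$: positivity of $\Phi_{\lambda_N,\delta}(H_N)$ with $\lambda_N\to-\infty$ forces $L(H_N)\to 0$, a subsequential limit $H^*\in\cT_p(X,v)\cap(\ker M)^\perp$ with $\|H^*\|=1$ has $Q_0(H^*)\le 0$ by positive curvature and $M(H^*)\ge c>0$, and $Q_0(H^*)+\delta M(H^*)\le\delta c<0$ contradicts $0<\Phi_{\lambda_N,\delta}(H_N)\le Q_0(H_N)+\delta M(H_N)$. The argument is closer in spirit to a direct quadratic-form estimate than to the Schur-complement route sketched in Example \ref{ex:classical}, but both would work; the compactness route has the advantage of handling the degeneracy in $\ker M$ cleanly in one stroke rather than via block decomposition.
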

We leave the proof of Theorem \ref{thm:signature-clamped-relaxed}
  to the reader.

The basic idea of the proof of
 Theorem \ref{thm:sigmain}, 
 is to obtain a negative relaxed Hessian $q$ from
  Theorem \ref{thm:signature-clamped-relaxed} and then 
   apply Theorem \ref{lem:sigmainWeak}.  We begin
 with the following lemma.

\begin{lem}
 \label{easy-lemma-variant}
   Suppose $R$ and $T$ are operators on a finite
   dimensional Hilbert space $H=K\oplus L$.  
   Suppose further that, with respect to this decomposition of 
  $H$,  the operator  $R=CC^\ss$ for
\[
   C=\begin{bmatrix} r \\ c \end{bmatrix} : L \to K\oplus L \quad\text{and}\quad 
  T=\begin{bmatrix}  T_0 & 0 \\ 0 & 0 \end{bmatrix}. 
 \]
   If $c$ is invertible and if for every $\delta>0$ there is 
  a $\eta>0$ such that for all $\lambda>\eta$, 
\[
  T+\delta I +\lambda R \succeq 0,
\]
 then $T\succeq 0$.
\end{lem}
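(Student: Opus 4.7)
The plan is to reduce the inequality $T + \delta I + \lambda R \succeq 0$ to a scalar-free inequality on $K$ via a Schur complement, and then take the two limits $\lambda \to \infty$ and $\delta \to 0^+$ in that order.

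First I would write everything in block form with respect to $K \oplus L$. Using $R = CC^{\ss}$ one gets
$R = \begin{bmatrix} rr^{\ss} & rc^{\ss} \\ cr^{\ss} & cc^{\ss}\end{bmatrix}$,
and since $c$ is invertible the $(2,2)$-block $cc^{\ss}$ of $R$ is strictly positive, so for any fixed $\delta > 0$ and all $\lambda$ large enough the $(2,2)$-block $\delta I_L + \lambda cc^{\ss}$ of $T + \delta I + \lambda R$ is positive definite. Consequently $T + \delta I + \lambda R \succeq 0$ is equivalent to positivity of its Schur complement
$S(\lambda,\delta) = T_0 + \delta I_K + \lambda rr^{\ss} - \lambda^{2} r c^{\ss}\bigl(\delta I_L + \lambda cc^{\ss}\bigr)^{-1} c r^{\ss}$.

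The key algebraic observation is that $c^{\ss}(cc^{\ss})^{-1}c = I_L$ because $c$ is invertible, so in the formal limit $\delta = 0$ the last two terms cancel. To make this rigorous for $\delta > 0$, I would apply the resolvent identity
$(cc^{\ss} + \epsilon I_L)^{-1} = (cc^{\ss})^{-1} - \epsilon(cc^{\ss} + \epsilon I_L)^{-1}(cc^{\ss})^{-1}$
with $\epsilon = \delta/\lambda$, so that $(\delta I_L + \lambda cc^{\ss})^{-1} = \lambda^{-1}(cc^{\ss} + \epsilon I_L)^{-1}$. Substituting and using the cancellation yields
$S(\lambda,\delta) = T_0 + \delta I_K + \delta \cdot r c^{\ss}\bigl(cc^{\ss} + (\delta/\lambda) I_L\bigr)^{-1}(cc^{\ss})^{-1} c r^{\ss}$,
so the $\lambda$-dependence moves entirely inside a resolvent that converges. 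Letting $\lambda \to \infty$ (with $\delta > 0$ fixed) gives
$S(\lambda,\delta) \longrightarrow T_0 + \delta I_K + \delta\, r(c^{\ss}c)^{-1} r^{\ss}$,
a self-adjoint operator on $K$. Because $S(\lambda,\delta) \succeq 0$ along the sequence, this limit is also $\succeq 0$.

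Finally, $T_0 + \delta\bigl(I_K + r(c^{\ss}c)^{-1} r^{\ss}\bigr) \succeq 0$ for every $\delta > 0$; letting $\delta \to 0^+$ we conclude $T_0 \succeq 0$, hence $T \succeq 0$ since the remaining blocks of $T$ vanish.

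The main obstacle is spotting the cancellation $c^{\ss}(cc^{\ss})^{-1}c = I_L$ that forces the $\lambda rr^{\ss}$ term to be absorbed; without it the Schur complement blows up and no naive limit argument works. The invertibility of $c$ is precisely what powers this cancellation, and the two-parameter order of limits (first $\lambda \to \infty$, then $\delta \to 0$) is essential because each $\delta > 0$ buys strict positivity of the $(2,2)$-block for all sufficiently large $\lambda$.
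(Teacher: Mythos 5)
Your proof is correct and follows essentially the same route as the paper: block-decompose $T+\delta I+\lambda R$, take the Schur complement with respect to the $(2,2)$-block $\delta I_L+\lambda cc^{\ss}$, exploit the cancellation coming from $c^{\ss}(cc^{\ss})^{-1}c=I_L$ (the paper achieves the same cancellation by writing $\lambda I=\lambda c^{\ss}(cc^{\ss})^{-1}c$), and then send $\delta\to 0^+$. The only cosmetic difference is that you compute the Schur complement exactly as $T_0+\delta I_K+\delta\,rc^{\ss}(cc^{\ss}+\tfrac{\delta}{\lambda}I)^{-1}(cc^{\ss})^{-1}cr^{\ss}$ and pass to the $\lambda\to\infty$ limit, whereas the paper majorizes the middle term directly by a $\lambda$-independent quantity $\delta\,r(\cdot)r^{\ss}$; both lead to the same conclusion once $\delta\to 0$, so the arguments are the same in substance.
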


\begin{proof}
  Write
\[
  T+\delta I +\lambda R 
   =\begin{bmatrix} T_0 + \delta I+ \lambda  rr^\ss & \lambda rc^\ss \\
            \lambda cr^\ss & \delta + \lambda cc^\ss \end{bmatrix}.
\]
From Schur complements
 it follows that
\[
  T_0 +\delta I + r(\lambda - \lambda^2 c^\ss(\delta+\lambda cc^\ss)^{-1} c )r^\ss
  \succeq 0.
\]
 Now
\[
\begin{split}
r(\lambda - \lambda^2 c^\ss(\delta+\lambda cc^\ss)^{-1} c )r^\ss
 & =   \lambda 
    rc^\ss((c c^\ss)^{-1} - \lambda (\delta+ \lambda cc^\ss)^{-1})
 c r^\ss \\
 & =  \lambda rc^\ss \delta (cc^\ss)^{-1}(\delta +\lambda (cc^\ss))^{-1} c r^\ss \\
 & \preceq \delta r (cc^\ss)^{-1} r^\ss.
 \end{split}
\]
 Hence,
\[
 T_0+\delta I + \delta r(cc^\ss)^{-1} r^\ss  \succeq  0.
\]
 Since the above inequality holds for all $\delta>0$, it follows
 that  $T_0 \succeq 0$.
\end{proof}

We now have enough machinery developed to prove
Theorem \ref{thm:sigmain}.

\begin{proof}[Proof of Theorem {\rm\ref{thm:sigmain}}]
  Fix $\lambda, \delta >0 $ and consider 
$q(x)[h]= -p_{\lambda,\delta}^{\prime\prime}(x)[h]$.
 We are led to investigate the middle matrix $Z^{\lambda,\delta}$ of
$q(x)[h]$,
 whose border vector $V(x)[h]$ includes all monomials of the
 form $h_j m$, where $m$ is a word in $x$ only of length at 
  most $d-1$; here $d$ is the degree of $p$.  Indeed,
\[
  Z^{\lambda,\delta}
    = Z +\delta I + \lambda W,
\]
where $Z$ is the middle matrix for $-p''(x)[h]$, and
$W$ is the middle matrix for the polynomial $p^\prime(x)[h]^\ss
 p^\prime(x)[h]$.  With an appropriate choice of ordering for the border 
vector $V$, we
 have, $W=CC^\ss$, where 
\[
   C(x) = \begin{bmatrix} w(x) \\ c \end{bmatrix},
\]
 for a nonzero vector $c$; 
 and at the same time,
\[
  Z(x) = \begin{bmatrix} Z^{0,0}(x) & 0 \\ 0 &  0\end{bmatrix}.
\]

By the curvature  hypothesis at a given 
 $X$ with $(X,v)\in\cR$, 
 Theorem \ref{thm:signature-clamped-relaxed}
 implies
   for every $\delta>0$ there is an $\eta>0$
  such that if $\lambda>\eta$
  $$\langle q(X)[H]v,v  \rangle\ge 0  \qquad \text{ for 
  all } (X,v)\in \cR \text{ and  all } H.$$ 
   Hence, by Theorem \ref{lem:sigmainWeak}, 
 the middle matrix, $Z^{\lambda,\delta}(X)$ for $q(x)[h]$
  is positive semidefinite. 
  We are in the setting of Lemma \ref{easy-lemma-variant} 
  from which we obtain
   $Z^{0,0}(X)\succeq 0$.
   If this held for $X$ in a nc basic open semialgebraic set, 
   then Theorem \ref{thm:convexd2}
    forces $p$ to have degree 
   no greater than 2. The proof of that theorem applies easily 
   here to finish this proof.
\end{proof}

\subsection{Exercises}

\bexe
Compute the BV-MM representation for the relaxed Hessian
of $x^3$ and $x^4$.
\eexe

\section{Convex semialgebraic nc sets}
 \label{sec:convexsemialg}

In this section we will give a brief overview of 
convex semialgebraic nc sets and positivity of nc polynomials on 
them. We shall see that their structure\index{semialgebraic set!convex}\index{polynomial!convex}
is much more rigid than that of their commutative counterparts.
For example, roughly speaking, each convex semialgebraic nc set
is a spectrahedron\index{spectrahedron}; i.e., a solution set of a linear matrix
inequality\index{linear matrix inequality}\index{LMI} 
(cf.~Subsection \ref{subsec:lmi} below). Similarly, every nc polynomial nonnegative on a
spectrahedron admits a sum of squares\index{sum of squares} representation with weights
and optimal degree bounds (see Subsection \ref{subsec:convPos} for details and
precise statements).

\subsection{nc Spectrahedra}\label{subsec:lmi}

Let $L$ be an affine linear pencil. Then the solution set of 
the linear matrix inequality (LMI) $L(x)\succ0$ is
\[
  \cD_L = \bigcup_{n\in\NN} \big\{X\in (\SS^{n\times n})^g \colon  L(X) \succ 0 \big\},
\]
and is called a \df{nc spectrahedron}.
 The set $\cD_L$ is convex in the
  sense that each 
\[
\cD_L(n):=\big\{X\in (\SS^{n\times n})^g \colon  L(X) \succ 0 \big\}
\]
 is convex.\index{convex}\index{nc basic open semialgebraic set!convex}
  It is also a noncommutative basic open semialgebraic set
as defined in Subsection \ref{subsec:ncsa} above.
 The main theorem of this section is the converse,
 a result
 which has implications for both semidefinite programming
 and systems engineering.

Most of the time we will focus on monic linear pencils.\index{linear pencil}\index{linear pencil!monic}
An affine linear pencil $L$ is called \df{monic} if $L(0)=I$, i.e.,
$L(x)= I + A_1 x_1 + \cdots + A_g x_g$.
Since we are mostly interested in the set $\cD_L$, there is no harm
in reducing to this case whenever $\cD_L\neq\emptyset$; see Exercise
\ref{ex:monicMe}.

  Let $p\in \RR^{\dd\times \dd}\ax$ be a given symmetric noncommutative
  $\dd\times\dd$-valued matrix polynomial. 
  Assuming that $p(0)\succ 0$, the
  positivity   set $\cD_{p}(n)$ of
  a noncommutative symmetric polynomial $p$ in dimension $n$
  is the component of $0$ of the set
\[
   \{ X \in(\SRnn)^g  \ \colon  \ p(X)\succ0 \}.
\]
  The
  \df{positivity
  set}, $\cD_p$, is the
  sequence of sets $(\cD_p(n) )_{n\in\NN}$.
   The noncommutative set $\cD_p$ is  called \df{convex} if, for each $n,$
  $\cD_p(n)$ is convex.

\begin{thm}[Helton-McCullough \cite{HMppt}]\label{thm:conv=lmi}
   Fix $p$ a $\dd \times \dd$  symmetric matrix
  of polynomials in  noncommuting variables.
 Assume
\begin{enumerate}[\rm (1)]
  \item $p(0)$ is positive definite;
  \item $\cD_p$ is bounded; and
  \item $\cD_p$ is convex.
 \end{enumerate}
Then there is a monic linear pencil $L$ such that
\[
\cD_L=\cD_p.
\]
\end{thm}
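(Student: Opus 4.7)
The plan combines the curvature theory developed in Section \ref{sec:variety} with an explicit Schur-complement construction. Convexity of $\cD_p$ forces the boundary variety of $p$ to have positive curvature (in the sense of Subsection \ref{subsec:curvature}); Theorem \ref{thm:sigmain} then forces a minimal defining polynomial to be concave of degree at most two; and such a polynomial can be re-expressed, via Theorem \ref{thm:ldlpos}, as a monic linear pencil.

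First I would analyze the boundary. Since $p(0) \succ 0$ and $\cD_p$ is bounded, for each $n$ the topological boundary $\partial \cD_p(n)$ is contained in the set of tuples $X$ such that $p(X) \succeq 0$ has nontrivial kernel. Let $\cR$ be the collection of full rank pairs $(X,v) \in \cV(p)$ with $X \in \partial \cD_p$. Fix $(X_0,v) \in \cR$ and a clamped tangent direction $H$, so that $p'(X_0)[H]v = 0$. Then $\phi(t) := \langle p(X_0 + tH)v, v\rangle$ satisfies $\phi(0) = \phi'(0) = 0$. If $\phi''(0) > 0$, then $\phi(t) > 0$ for $t$ in a small punctured neighborhood of $0$; combined with boundedness of $\cD_p$, convexity of $\cD_p(n)$ and the fact that $0$ is in the same component as $X_0$ is on the boundary of, this would force $X_0$ to lie in the interior of $\cD_p(n)$, contradicting that $X_0$ is a boundary point. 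Consequently $\phi''(0) \leq 0$; that is, $\langle p''(X_0)[H]v, v\rangle \leq 0$, which is exactly the positive curvature condition of Subsection \ref{subsec:curvature}.

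Second, choose a nonempty nc basic open semialgebraic set $\cO$ inside which the full rank locus $\cR \cap \cO$ is nonempty, and let $\tilde p$ be a minimum degree defining polynomial for $\cR \cap \cO$. By Theorem \ref{thm:sigmain}, $\deg \tilde p \leq 2$ and $\tilde p$ is concave. One must then identify $\cD_p$ with the component of $0$ in $\{\tilde p \succ 0\}$: concavity of $\tilde p$ together with $\tilde p(0) \succ 0$ (after normalization) makes that component convex, while $\tilde p$ vanishing on $\cR$ together with the boundedness of $\cD_p$ pins its boundary down to the boundary of $\cD_p$, forcing the two sets to coincide.

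Finally, convert the concave matrix quadratic $\tilde p$ into a monic linear pencil. Because $\deg \tilde p = 2$ the Hessian $\tilde p''(x)[h]$ is independent of $x$, and matrix positivity of $-\tilde p''[h]$ combined with a Gram/middle matrix factorization yields $-\tilde p''[h] = \Lambda(h)^\ss \Lambda(h)$ for some linear $\Lambda$. Integrating, one obtains
$$\tilde p(x) = C(x) - M(x)^\ss M(x),$$
where $C$ is affine symmetric with $C(0) = \tilde p(0) \succ 0$ and $M$ is linear with $M(0) = 0$. The Schur complement identity (Theorem \ref{thm:ldlpos}) gives
$$\tilde p(X) \succ 0 \iff \begin{bmatrix} C(X) & M(X)^\ss \\ M(X) & I \end{bmatrix} \succ 0,$$
on the component of $0$, and conjugating by $\operatorname{diag}(C(0)^{-1/2}, I)$ turns the pencil on the right into a monic linear pencil $L$ satisfying $\cD_L = \cD_{\tilde p} = \cD_p$.

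The main obstacle is step two: transferring the global convexity of $\cD_p$ into the pointwise positive curvature hypothesis of Theorem \ref{thm:sigmain} at full rank boundary points, and then confirming $\cD_p = \cD_{\tilde p}$ in passing from $p$ to its minimal degree surrogate. In the commutative case such arguments are classical, but in the nc setting the matrix kernel vector $v$ and the block structure at each dimension $n$ require careful bookkeeping; boundedness of $\cD_p$ is used essentially to rule out additional components and to control the boundary geometry globally.
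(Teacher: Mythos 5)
Your route is genuinely different from the paper's, and unfortunately it has gaps that, the authors explicitly say, forced them onto a different path. The paper proves Theorem~\ref{thm:conv=lmi} by starting from the Effros--Winkler matricial Hahn--Banach separation theorem, which provides a (possibly infinite) family of LMIs cutting out the matrix convex set, and then uses a degree-bounded free Zariski closure of a single boundary pair $(X,v)$ to reduce to a finite family. The authors stress that these techniques ``have little relation to the methods of noncommutative calculus and positivity in the previous sections,'' i.e.\ to the curvature machinery of Section~\ref{sec:variety} that you invoke. Indeed, the curvature approach is what is used in \cite{DHM07a}, and there it only yields degree at most four, under an irreducibility (minimal-degree) hypothesis that is assumed, not derived.

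The concrete gaps. First, Theorem~\ref{thm:sigmain} and the entire variety/curvature apparatus (the definition of $\cV(p)$, full rank points, the clamped second fundamental form, Definition~\ref{it:mindegKey}) are stated for \emph{scalar} $p\in\RR\ax$, with kernel vectors $v\in\RR^n$; Theorem~\ref{thm:conv=lmi} concerns $\dd\times\dd$ matrix polynomials, where $p(X)\in\RR^{\dd n\times\dd n}$ and the relevant kernel vectors live in $\RR^{\dd n}$. You cannot feed the matrix $p$ into Theorem~\ref{thm:sigmain} as stated. Second, even granting a matrix-valued extension of the curvature theory, your step two is unsupported: you replace $p$ by a minimum-degree defining polynomial $\tilde p$ for the set $\cR\cap\cO$ and assert $\cD_{\tilde p}=\cD_p$. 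Nothing forces this. The minimal-degree polynomial only reproduces the kernel pairs $(X,v)$ on the chosen fragment $\cR\cap\cO$ of the boundary variety; it controls neither the full boundary of $\cD_p$, nor the sign of $\tilde p$ off that fragment, nor whether $\cD_{\tilde p}$ is bounded. (Relatedly, Theorem~\ref{thm:sigmain} demands that $\cR$ consist of full rank points of $\tilde p$ itself, whereas you built $\cR$ from the original $p$; you would need to show these full rank loci coincide, and also that $\cR$ is nonempty at all.) Third, your verification of positive curvature at a boundary point is not yet an argument: $\phi''(0)>0$ gives $\langle p(X_0+tH)v,v\rangle>0$ for small $t\neq 0$ along a single tangent direction and a single kernel vector $v$, which does not by itself place $X_0$ in the interior of $\cD_p(n)$; one needs a supporting hyperplane/convexity argument applied uniformly over the kernel, and this should be spelled out. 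Your final Schur-complement step (converting a degree-two concave $\tilde p$ with $\tilde p(0)\succ 0$ into a monic affine pencil, cf.\ Theorem~\ref{thm:ldlpos} and the concave-quadratic exercise of Section~\ref{sec:convexsemialg}) is correct, but it rests on the unestablished claim that a degree-two concave surrogate with the same positivity domain exists. In short: the Schur complement endgame is fine, the opening curvature heuristic is plausible in spirit, but the middlegame---getting from a matrix $p$ with convex $\cD_p$ to a degree-two concave $\tilde p$ with the \emph{same} positivity domain---is precisely the hard part, and the curvature theory in this chapter does not deliver it.
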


Here we shall confine ourselves to a few words about the techniques
involved in the proof, and refer the reader to \cite{HMppt}
for the full proof.   
Since we are dealing with matrix convex sets, it is not surprising
 that  the starting point for our analysis is the  matricial version
 of the Hahn-Banach Separation  theorem of
 Effros and Winkler \cite{EW97} which
 (itself a part of the theory of
 operator spaces and completely positive
 maps \cite{BL04,Pa02,Pi03})
 says that given a point $x$
 not inside a matrix convex set there is a (finite) linear matrix inequality which
 separates $x$ from the set.  For a general matrix convex
 set $\cC$, the conclusion is then that there is a collection,
 likely infinite, of LMIs which cut out $\cC$.

In the case $\cC$ is matrix convex and also semialgebraic,
   the challenge
   is to prove that there is actually
   a {\em finite} collection of LMIs which define $\cC$.
   The techniques used to meet this challenge
   have little relation to
   the methods of
   noncommutative
   calculus and positivity in the previous sections.
  Indeed a basic tool  (of independent
  interest) is  a degree bounded type
  of free Zariski closure of a single point $(X,v) \in \gtupn \times \RR^n$,
\[
   Z_d(X,v):= \bigcup_m \{ (Y, w) \in \gtupm\times \RR^m  :
    q(Y)w= 0 \text{ if } q(X)v = 0,  \; q \in \RR\ax_d \}.
\]
  Chief among a pleasant  list of natural properties is the fact 
  that there is an $(X,v)$ with $X\in\bd_p$ and  $p(X)v=0$ for which
  $Z_d(X,v)$ contains all pairs $(Y,w)$ such that
  $Y\in\bd_p$ and $p(Y)w=0$. 
  Combining this with the  Effros-Winkler Theorem
  and battling degeneracies is a bit tricky, but
  voila separation prevails in the end.
    See \cite{HMppt} for the details.

An unexpected  consequence of Theorem \ref{thm:conv=lmi} is that
projections of noncommutative semialgebraic sets may not be semialgebraic,
see Exercise \ref{ex:notproj}.
  For perspective, in the commutative case
  of a basic open semialgebraic subset $\cC$
  of $\RR^{g}$, 
  there is a stringent condition, called the
 ``\emph{line test}'' (see Chapter 6 for more details), 
 which, in addition to convexity,
  is  necessary for $\cC$ to be a spectrahedron.
  In two dimensions the line test is necessary and sufficient
  \cite{HV07}, a result used by Lewis-Parrilo-Ramana \cite{LPR05}
  to settle a 1958 conjecture of Peter Lax on hyperbolic polynomials.

 In summary, if a (commutative)
 bounded basic open semialgebraic convex set
 is a spectrahedron, then it must pass
 the highly restrictive line test; whereas
 a  nc basic open semialgebraic set is a spectrahedron
if and only if it is convex.

\subsection{Noncommutative Positivstellens\"atze under convexity assumptions}
\label{subsec:convPos}

 An algebraic certificate for positivity of a polynomial $p$
 on a semialgebraic set $S$ is a Positivstellensatz. The familiar
 fact that a polynomial $p$ in one-variable which is positive on
 $\RR$ is a sum of squares is an example.\index{Positivstellensatz}\index{sum of squares} 

 The theory of Positivstellens\"atze -  a pillar of the
 field of real algebraic geometry -  underlies
 the main approach currently used for global optimization
 of polynomials. 
 See \cite{Las} or Chapters 2 and 3 of Parrilo 
 for a beautiful treatment of this, and other,
 applications of
 commutative real algebraic geometry.\index{convex polynomial}
 Further, 
 because convexity of a polynomial
 $p$ on a set $S$ is equivalent to  positivity of the Hessian of
 $p$ on $S$, this theory also provides a link between convexity 
 and semialgebraic geometry.  Indeed, this link in the noncommutative
 setting ultimately lead to the conclusion the a matrix convex noncommutative polynomial has degree at most two, cf.~Section \ref{sec:eigsMM}.

In this section we give a result of opposite type.
We present a noncommutative Positivstellensatz for a polynomial
to be nonnegative on a convex semialgebraic nc set (i.e., on a spectrahedron).
Again, this result is 
cleaner and more
 rigid than the commutative counterparts (cf.~Theorem \ref{thm:ncsos}).

\begin{thm}[\cite{HKM++}]
Suppose $L$ is a monic linear pencil.\label{thm:lmipos}
Then a noncommutative polynomial $p$ is \emph{positive semidefinite}
on $\cD_L$ if and only if it
has a weighted sum of squares representation with optimal degree bounds.
Namely,
\beq\label{eq:wsos}
p
= s^\ss s   + \sum_j^{\rm finite} f_j^\ss L f_j,
\eeq
where
 $s, f_j$ are vectors of noncommutative polynomials of degree no greater than
 $\frac{\deg(p) }{2}$.
\end{thm}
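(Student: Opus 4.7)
\emph{Proof plan.} The ``if'' direction is immediate: at any $X \in \cD_L$ we have $L(X) \succ 0$, so each summand of \eqref{eq:wsos} evaluates to a positive semidefinite matrix at $X$ and thus $p(X) \succeq 0$.

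For the converse, set $d := \lceil \deg(p)/2 \rceil$ and let $\cM$ denote the cone of polynomials of the form $s^\ss s + \sum_j f_j^\ss L f_j$ with $s$ and the $f_j$ tuples of polynomials of degree at most $d$. Then $\cM$ is a convex cone inside the finite-dimensional space $\cS$ of symmetric polynomials of degree at most $2d$. The first substantial step is to show that $\cM$ is \emph{closed} in $\cS$: one bounds the number of summands needed via a Carath\'eodory-style dimension count and then runs a properness argument, leveraging that $L$ is monic to extract uniform control on the $s$ and $f_j$ along a convergent sequence in $\cM$.

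Suppose for contradiction that $p \notin \cM$. By Hahn--Banach separation applied to the closed convex cone $\cM$ and the point $p$, there exists a symmetric linear functional $\ell : \cS \to \RR$ with $\ell \geq 0$ on $\cM$ and $\ell(p) < 0$. We manufacture from $\ell$ a point $X$ with $L(X) \succ 0$ but $p(X)$ not positive semidefinite, contradicting the hypothesis. Define a positive semidefinite bilinear form on $\RR\ax_d$ by $\langle f, g \rangle := \ell(g^\ss f)$; positivity holds because $f^\ss f \in \cM$. Quotient by the null space to obtain a finite-dimensional Hilbert space $\cH$, and define symmetric operators $X_1, \dots, X_g$ on $\cH$ as suitable compressions of the left multiplication operators by $x_1, \dots, x_g$. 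Writing $\xi_f \in \cH$ for the class of $f$, one checks that $\langle L(X) \xi_f, \xi_f \rangle = \ell(f^\ss L f) \geq 0$ for $f$ of degree $\leq d-1$, yielding $L(X) \succeq 0$, while $\langle p(X) \xi_1, \xi_1 \rangle = \ell(p) < 0$. A small perturbation of $\ell$ upgrades $L(X) \succeq 0$ to $L(X) \succ 0$, placing $X$ genuinely in $\cD_L$.

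The main obstacle is the GNS step. Left multiplication by $x_i$ raises degree, so its compression to $\cH$ need not interact correctly with the polynomial evaluations of $L$ and $p$ unless a flat extension type compatibility holds. Making the $X_i$ well-defined and symmetric, and verifying that the evaluations $L(X)$ and $p(X)$ really do reproduce $\ell$ on the relevant inner products, is the technical heart of the argument; it requires a careful choice of the truncation degree (possibly enlarging $d$ by one) together with the Hankel-type relations forced by $\ell \geq 0$ on $\cM$. The closedness of $\cM$ in the cone construction is also delicate, since a sum of finitely many closed cones need not be closed in general, and here one must exploit the specific structure of $\cM$ --- in particular, the presence of the $s^\ss s$ summand --- to rule out escape to infinity.
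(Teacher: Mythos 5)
Your high-level strategy matches the one the paper gestures at (and defers to \cite{HKM++} for): form the truncated quadratic module $\cM$, prove closedness, separate $p$ from $\cM$ by Hahn--Banach, and run a GNS-type construction on the separating functional to manufacture a point of $\cD_L$ witnessing $p(X)\not\succeq 0$. The easy direction and the separation step are fine. But you have correctly located, and then not filled, the gap that the paper itself names as ``the main ingredient of the proof'': the analysis of rank preserving (flat) extensions of truncated noncommutative Hankel matrices. Compressing left multiplication by $x_i$ to the quotient $\cH$ of $\RR\ax_d$ does not in general give operators $X_i$ satisfying $\langle q(X)\xi_1,\xi_1\rangle = \ell(q)$ for $q$ of degree up to $2d$ --- that identity is exactly what fails without flatness, and is exactly what you would need to convert $\ell(p)<0$ into $p(X)\not\succeq 0$ and $\ell\ge 0$ on $\cM$ into $L(X)\succeq 0$. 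Saying ``it requires \ldots the Hankel-type relations forced by $\ell\ge 0$ on $\cM$'' is a description of the missing lemma, not a proof of it. One must actually show that a truncated positive functional nonnegative on $\cM$ admits a flat (rank preserving) positive extension, and that is the substance of the theorem.

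There is a second, related problem: your hedge ``possibly enlarging $d$ by one'' would quietly forfeit the optimal degree bound $\deg(s),\deg(f_j)\le \tfrac{\deg p}{2}$, which is part of the assertion being proved, not a negotiable convenience. The sharpness of the degree bound is precisely why a generic truncate-and-compress GNS construction does not suffice and why the flat extension analysis is needed. Finally, your closedness argument for $\cM$ is only sketched; the Carath\'eodory count plus a properness argument is the right genre, but note that $L$ monic does not make $\cD_L$ bounded, so the uniform control on $s$ and the $f_j$ along a convergent sequence requires more than you indicate. The referenced paper handles this; your outline asserts it.
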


The main ingredient of the proof is an analysis of
 rank preserving extensions of truncated
noncommutative
 Hankel matrices;
see \cite{HKM++} for details.
We point out that with $L=1$, Theorem \ref{thm:lmipos} recovers
Theorem \ref{thm:ncsos}.

Theorem \ref{thm:lmipos} contrasts sharply with the commutative setting,
where the degrees of $s, f_j$ are vastly greater than $\deg(p)$
 and assuming only $p$  nonnegative yields a clean
Positivstellensatz so seldom that the cases are noteworthy.

\subsection{Exercises}

\bexe\label{ex:monicMe}
Suppose $L$ is an affine linear pencil such that $0\in \cD_L(1)$.
Show that there is a monic linear pencil
$\check L$ with $\cD_L=\cD_{\check L}$.
\eexe
\index{linear pencil!monic}

\bexe\label{ex:notproj}
 Chapters 6 and 7
 discuss sets $D\subseteq \RR^g$ which have a semidefinite
 representation as a strict generalization 
  of a spectrahedron.   For instance, 
consider the TV screen $($cf.~Subsection {\rm\ref{subsec:ncsa}}$)$
\[
   {\rm ncTV}(1) =\{X\in\RR^2 \colon 1 - X_1^4 - X_2^4 > 0\}\subseteq \RR^2.
\]
Given $\alpha$ a positive real number,
  choose
  $\gamma^4= 1+2\alpha^2$ and
   let
\begin{equation}
  \label{eq:Lalpha0}
   L_0 = \begin{bmatrix} 1 & 0 &  y_1 \\
              0 & 1 & y_2 \\
               y_1 & y_2 &
                   1- 2\alpha (y_1+y_2) \end{bmatrix}
\end{equation}
  and
\begin{equation}
  \label{eq:Lalpha12}
   L_j =\begin{bmatrix} 1 & \gamma x_j
         \\ \gamma x_j & \alpha + y_j \end{bmatrix}, \quad j=1,2.
\end{equation}
   Note that the $L_j$ are not monic, but because $L_j(0)\succ 0,$
   they
   can be normalized to be monic without altering the solution
   sets of $L_j(X)\succ 0$, cf. Exercise \ref{ex:monicMe}.
Let $L=L_0\oplus L_1 \oplus L_2$.

 It is readily verified that ${\rm ncTV}(1)$ is the projection,
  onto the first two $($the $x)$  coordinates of the
 set $\cD_L(1)$; i.e., 
\[
{\rm ncTV}(1) =
\{ X \in \RR^2 \colon \exists Y\in\RR^2\  L(X,Y) \succ 0\}.
\]
\ben[\rm(1)]
\item
Show that ${\rm ncTV(1)}$ is not a spectrahedron.
$(${\em Hint}: How often is  $L_{\rm TV}(t X, tY)$ for $t \in \RR$  singular?$)$
\item
Show that ${\rm ncTV}$ is not the projection of the nc spectrahedron $\cD_L$.
\item
Show that ${\rm ncTV}$ is not the projection of \emph{any} nc spectrahedron.
\item
Is ${\rm ncTV}(2)$ a projection of a spectrahedron?
 $($Feel free to use the results about ncTV and LMI representable
 sets $($spectrahedra$)$, stated without proofs, from Subsection {\rm\ref{subsec:ncsa}}
 and Subsection {\rm\ref{subsec:lmi}.)}
\een
\eexe
\index{TV screen}
\index{spectrahedron}

\bexe
  If $q$ is a symmetric concave matrix-valued polynomial with $q(0)=I$,  then
  there exists a linear pencil $L$ and a matrix-valued linear polynomial
$\Lambda$ such that
  \[
   q = I-L -\Lambda^\ss \Lambda.
\]
\eexe
\index{polynomial!concave}

\bexe
  Consider the monic linear pencil
\[
  M(x) = \begin{bmatrix} 1 & x \\ x & 1\end{bmatrix}.
\]
\ben[\rm(1)]
\item
Determine $\cD_M$.
\item
Show that $1+x$ is positive semidefinite on $\cD_M$.
\item
Construct a representation for  $1+x$ 
 of the form \eqref{eq:wsos}.
\een
\eexe

\bexe
Consider the univariate affine linear pencil
\[
L(x)= \begin{bmatrix} 1 & x \\ x & 0\end{bmatrix}.
\]
\ben[\rm(1)]
\item
Determine $\cD_L$.
\item
Show that $x$ is positive semidefinite on $\cD_L$.
\item
Does $x$ 
admit 
a representation of the form \eqref{eq:wsos}?
\een
\eexe

\bexe
Let $L$ be an affine linear pencil. Prove that: 
\ben[\rm (1)]
\item
$\cD_L$ is bounded if and only if  $\cD_L(1)$ is bounded;
\item
$\cD_L=\emptyset$ if and only if $\cD_L(1)=\emptyset$.
\een
\eexe

\bexe
Let $L=I+A_1x_1+\cdots+A_gx_g$ be a monic linear pencil and
assume that $\cD_L(1)$ is bounded. 
Show that $I,A_1,\ldots,A_g$ are linearly independent.
\eexe

\bexe
Let
$$
\De(x_1,x_2)=
I+ 
 \begin{bmatrix}  0 & 1 & 0 \\
                     1 & 0 & 0\\
                     0 & 0 & 0
                      \end{bmatrix} x_1 +
\begin{bmatrix}  0 & 0 & 1 \\
                     0 & 0 & 0\\
                     1 & 0 & 0
                      \end{bmatrix} x_2 =
\begin{bmatrix}  1 & x_1 & x_2 \\
                     x_1 & 1 & 0\\
                     x_2 & 0 & 1
                      \end{bmatrix}$$
and
$$
\Ga(x_1,x_2)=I+ \begin{bmatrix}1  & 0 \\ 0 & -1
\end{bmatrix} x_1 +  \begin{bmatrix} 0  & 1 \\ 1 & 0
\end{bmatrix} x_2 =  \begin{bmatrix} 1+x_1  & x_2 \\ x_2 & 1-x_1
\end{bmatrix}
$$
be affine linear pencils. Show:
\ben[\rm (1)]
\item
$\cD_{\De}(1)= \cD_{\Ga}(1)$.
\item
$\cD_{\Ga}(2)\subsetneq \cD_{\De}(2)$.
\item 
Is $\cD_{\De}\subseteq\cD_{\Ga}$?
What about
$\cD_{\Ga}\subseteq\cD_{\De}$?
\een
\eexe

\bexe
Let $L=A_1x_1+\cdots + A_gx_g\in \SS^{d\times d}\ax$ be a (homogeneous) linear
pencil. Then the following are equivalent:
\ben[\rm(i)]
\item
$\cD_L(1)\neq\emptyset$;
\item If $u_1,\dots,u_m\in\RR^d$ with $\sum_{i=1}^mu_i^\ss L(x)u_i=0$,
then $u_1=\dots=u_m=0$.
\een
\eexe

\section{From free real algebraic geometry to the real world}

Now that you have gone through the mathematics
we return to its implications.
In the linear systems engineering problems you have seen
both in Subsection \ref{subsec:Motivation} and in Chapter 2.2.1,
the conclusion was that the problem was equivalent to
solving an LMI. Indeed this is what one sees throughout the
literature. Thousands of engineering papers have a dimension
free problem and it converts (often by serious
cleverness) to an LMI in the best of cases,
or more likely there is some approximate solution which is
an LMI.

  While
engineers would be satisfied with convexity,  what they actually
do get is an LMI.
One would hope that there is a rich world of convex situations
not equivalent to an LMI. Then there would be  a variety of methods
waiting to be discovered for dealing with them.
Alas what we have shown here is compelling evidence
that any convex dimension free problem is equivalent to an LMI.
Thus there is no rich world  of convexity beyond what is already known
and no armada of techniques beyond those for producing LMIs which we
already see all around us.

\newpage

\printindex
\end{document}